\newcommand*{\isoarrow}[1]{\arrow[#1,"\rotatebox{90}{$\sim$}"%
]}
\newcommand{\longmono}{\mbox{\;$\lhook\joinrel\longrightarrow$\;}}
\newcommand{\longepi}{\mbox{\;$\relbar\joinrel\twoheadrightarrow$\;}} %
\title[On Selmer and Shafarevich--Tate groups of modular forms]{On the structure of Selmer and Shafarevich--Tate groups\\of higher weight modular forms}
\subjclass[2010]{11F11, 14C25}
\keywords{Modular forms, Shafarevich--Tate groups, Selmer groups, Heegner cycles}
\author{Daniele Masoero}
\address{Departimento di Matematica,
Universit\`a di Genova,
Via Dodecaneso 35, 16146 Genova, Italy}
\email{masoero@dima.unige.it}
  \newcommand{\textcyr}[1]{%
    {\fontencoding{OT2}\fontfamily{wncyr}\fontseries{m}\fontshape{n}%
     \selectfont #1}}
\newcommand{\Sha}{{\mbox{\textcyr{Sh}}}}
\newcommand{\pp}{\mathfrak{p}}
\newcommand{\ur}{\mathrm{ur}}
\newcommand{\new}{\mathrm{new}}
\newcommand{\cont}{\mathrm{cont}}
\newcommand{\Hc}[1]{H^{#1}_{\mathrm{cont}}}
\newcommand{\Het}[1]{H^{#1}_{\mathrm{\acute{e}t}}}
\newcommand{\Hf}[1]{H^{#1}_f}
\newcommand{\Hur}[1]{H^{#1}_{\ur}}
\newcommand{\N}{\mathbb{N}}
\newcommand{\Z}{\mathbb{Z}}
\newcommand{\Q}{\mathbb{Q}}
\newcommand{\C}{\mathbb{C}}
\DeclareMathOperator{\ord}{ord}
\DeclareMathOperator{\Hom}{Hom}
\DeclareMathOperator{\End}{End}
\newcommand{\Frob}{\mathrm{Frob}}
\DeclareMathOperator{\Gal}{Gal}
\DeclareMathOperator{\im}{im}
\DeclareMathOperator{\ppar}{par}
\newcommand{\res}{{\mathrm{res}}}
\newcommand{\cores}{{\mathrm{cores}}}
\newcommand{\GL}{\mathrm {GL}}
\newcommand{\Qbar}{\overline{\Q}}
\newcommand{\ch}{\mathrm {CH}}
\newcommand{\KS}{\tilde{\mathcal{E}}^{k-2}_N}
\DeclareMathOperator{\AJ}{{\mathrm {AJ}}}
\newcommand{\CH}[1]{{\ch^{k/2}\big( #1\big)}_0}
\newcommand{\Wp}[1]{W_\pp[p^{#1}]}
\newcommand{\Wpm}{\Wp{M}}
\newcommand{\Sel}{\mathrm {Sel}}
\newcommand{\lxto}[1]{\overset{#1}{\longrightarrow}}
\newcommand{\set}[1]{\bigl\{{#1}\bigr\}}
\let\epsilon\varepsilon
\let\phi\varphi
\let\theta\vartheta
\let\kappa\varkappa
\let\rho\varrho
\newtheorem{conjecture}{Conjecture}
\newtheorem{theorem}{Theorem}[section]
\newtheorem{lemma}[theorem]{Lemma}
\newtheorem{proposition}[theorem]{Proposition}
\newtheorem{corollary}[theorem]{Corollary}
\theoremstyle{remark}
\newtheorem{remark}[theorem]{Remark}
\theoremstyle{definition}
\newtheorem{definition}[theorem]{Definition}
\begin{document}

\begin{abstract}
Under a non-torsion assumption on Heegner points, results of Kolyvagin describe the structure of Shafarevich--Tate groups of elliptic curves. In this paper we prove analogous results for ($p$-primary) Shafarevich--Tate groups associated with higher weight modular forms over imaginary quadratic fields satisfying a \lq\lq{}Heegner hypothesis\rq\rq{}. More precisely, we show that the structure of Shafarevich--Tate groups is controlled by cohomology classes built out of Nekov\'a\v{r}'s Heegner cycles on Kuga--Sato varieties. As an application of our main theorem, we improve on a result of Besser giving a bound on the order of these groups.
As a second contribution, we prove a result on the structure of ($p$-primary) Selmer groups of modular forms in the sense of Bloch--Kato.
\end{abstract}

\maketitle

\section{Introduction}

Classical Shafarevich--Tate groups are cohomological objects attached to abelian varieties defined over number fields. If $A$ is an abelian variety over a number field $K$ then the Shafarevich--Tate group of $A$ over $K$ is
\[ \Sha(A/K):=\bigcap_v\ker\Big(H^1\bigl(K,A(\overline K)\bigr)\longrightarrow H^1\bigl(K_v,A(\overline{K}_v)\bigr)\Big), \]
where $v$ varies over all places of $K$ and $K_v$ is the completion of $K$ at $v$. This group fits into a short exact sequence
\begin{equation} \label{sel-sha-sequence} 
0\longrightarrow A(K)\otimes\Q/\Z\longrightarrow\Sel(A/K)\longrightarrow\Sha(A/K)\longrightarrow0 
\end{equation}
where $\Sel(A/K)$ is the Selmer group of $A$ over $K$. The Shafarevich--Tate group is one of the ingredients that appear in the refined version of the Birch and Swinnerton-Dyer conjecture, which predicts that the order of vanishing at $s=1$ of the $L$-function $L(A,s)$ is equal to the rank of the Mordell--Weil group $A(K)$ and gives a recipe for the leading coefficient of the Taylor series of $ L(A,s) $ at $s=1$.

Shafarevich--Tate groups of abelian varieties are expected to be finite, but this conjecture is still wide open. The first result in this direction was obtained by Rubin, who proved in ~\cite{rubin1987tate} the finiteness of $\Sha(E/\Q)$ for elliptic curves $E$ over $\Q$ with complex multiplication such that $L(E,1)\neq0$. Later on, in a series of landmark papers (\cite{kolyvagin1990euler}, \cite{kolyvagin1991finiteness}), Kolyvagin showed the finiteness of $\Sha(E/\Q)$ for elliptic curves $E$ such that $L(E,s)$ vanishes to order at most $1$ at $s=1$. In fact, when combined with the Gross--Zagier formula (\cite{GZ}), Kolyvagin's results proved the rank part of the Birch and Swinnerton-Dyer conjecture for such elliptic curves. Even more remarkably, Kolyvagin managed to give a structure theorem for the relevant Shafarevich--Tate groups (\cite{kolyvagin1991structure}, \cite{mccallum1991kolyvagin}). It is worth pointing out that the crucial tool in Kolyvagin's arguments is his theory of \emph{Euler systems} of Heegner points in Galois cohomology.

The chief goal of our article is to prove a structure theorem analogous to Kolyvagin's for ($p$-primary) Shafarevich--Tate groups associated with modular forms of (even) weight $\geq4$ over suitable imaginary quadratic fields. We do this by using Heegner cycles on Kuga--Sato varieties, introduced in Nekov\'a\v{r} in \cite{nekovavr1992kolyvagin}. As an application of our main result, we improve on a bound due to Besser (\cite{besser1997finiteness}) on the order of these groups. 

In order to better describe our paper, we need some notation. Let $ f \in S^\new_k(\Gamma_0(N))$ be a normalized newform of even weight $ k\geq 4 $ and level $ \Gamma_0(N) $ with $ N\geq3 $. In order to avoid unenlightening technicalities, we assume throughout that $f$ is not a CM form. Let $ F $ be the totally real field generated by the Fourier coefficients of $ f $ and write $ \mathcal O_F $ for the ring of integers of $F$; moreover, let $ p\nmid N $ be an odd prime number that is unramified in $F$. Finally, fix a prime $ \pp $ of $F$ above $ p $ and write $ F_\pp $ (respectively, $ \mathcal{O}_\pp $) for the completion of $ F $ (respectively, $\mathcal  O_F $) at $\pp$.

Let $ \mathcal{M}_f $ be the motive associated with $f$ by Jannsen (\cite{jannsen1990mixed}) and Scholl (\cite{scholl1990motives}).
As in \cite{nekovavr1992kolyvagin}, we consider a free $ \mathcal{O}_\pp $-module $ A_\pp $ of rank $2$ such that $ V_\pp :=A_\pp \otimes_{\mathcal O_\pp}F_p$ is the $ k/2 $-twist of Deligne's $ \pp $-adic realization (\cite{deligne1971formes}) of $ \mathcal{M}_f $. We also set $ W_\pp:=V_\pp/A_\pp $.

Now let $K$ be an imaginary quadratic field in which all prime factors of $N$ split (in other words, $K$ satisfies the \emph{Heegner hypothesis} relative to $N$). To all these data we can attach a $ p $-adic Abel--Jacobi map
\begin{align}
\AJ_K: \CH{\KS/K}\otimes \mathcal{O}_\pp \longrightarrow H^1_\cont(K,A_\pp) \label{defselsha}
\end{align}
where $ \CH{\KS/K} $ is the group of homologically trivial cycles of codimension $ k/2 $ defined over $K$ modulo rational equivalence on the Kuga--Sato variety $\KS$ of level $N$ and weight $ k $. If $\Lambda_\pp(K)$ denotes the image of $\AJ_K$ then, essentially by results of Nizio\l, Nekov\'a\v{r} and Saito, $\Lambda_\pp(K)\otimes\Q_p/\Z_p$ injects into the Selmer group $H^1_f(K,W_\pp)$ of $W_\pp$ over $K$ in the sense of Bloch--Kato (\cite{bloch1990lfunctions}). This allows one to define the \emph{Shafarevich--Tate group} $\Sha(K,W_\pp)$ of $W_\pp$ over $K$ by the short exact sequence
\[ 0\longrightarrow\Lambda_\pp(K)\otimes\Q_p/\Z_p\longrightarrow H^1_f(K,W_\pp)\longrightarrow\Sha(K,W_\pp)\longrightarrow0, \]
which is the counterpart of \eqref{sel-sha-sequence} in our higher weight setting. Note that, by construction, the group $\Sha(K,W_\pp)$ is $p$-primary. 

Kolyvagin's theory of Euler systems of Heegner points on (modular) abelian varieties was extended by Nekov\'a\v{r} in \cite{nekovavr1992kolyvagin} to the case of modular forms of (even) weight $\geq4$. More precisely, Nekov\'a\v{r} defined a Euler system of \emph{Heegner cycles} in the Chow groups of $\KS$. In particular, he introduced a certain ``basic'' Heegner cycle $y_0\in\Lambda_\pp(K)$ and proved 

\begin{theorem}[Nekov\'a\v{r}] \label{nekovar-thm}
Assume that $p\nmid2N(k-2)!\phi(N)$. If $y_0$ is non-torsion then
\begin{enumerate}
\item $\Lambda_\pp(K)\otimes\Q=F_\pp\cdot y_0$;
\item $\Sha(K,W_\pp)$ is finite.
\end{enumerate}
\end{theorem}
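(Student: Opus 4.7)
The plan is to transport Kolyvagin's Euler system argument to Nekov\'a\v{r}'s setting of Heegner cycles on the Kuga--Sato variety $\KS$; both conclusions will follow from showing that the Bloch--Kato Selmer group $\Hf{1}(K,W_\pp)$ has $F_\pp$-corank at most one and is spanned modulo torsion by the Abel--Jacobi image of $y_0$. The first step is to introduce the full system of Heegner cycles
\[ y_n\in{\ch^{k/2}\bigl(\KS\times_{X_0(N)}K[n]\bigr)}_0\otimes\mathcal O_\pp \]
indexed by squarefree products $n=\ell_1\cdots\ell_r$ of \emph{Kolyvagin primes}: rational primes $\ell$ inert in $K$, coprime to $Np$, such that $\Frob_\lambda$ (with $\lambda=\ell\mathcal O_K$) acts as the identity on $\Wpm$ for a fixed $M\gg 0$. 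Chebotarev produces infinitely many such $\ell$. Nekov\'a\v{r}'s Euler system relations read
\begin{align}
\cores_{K[n\ell]/K[n]}(y_{n\ell})&=a_\ell(f)\,y_n, \\
y_{n\ell}&\equiv\Frob_\lambda\cdot y_n\pmod{\mathfrak L}
\end{align}
for any prime $\mathfrak L$ of $K[n\ell]$ above $\lambda$. Applying Kolyvagin's derivative $D_n=\prod_{\ell\mid n}D_\ell\in\Z[\Gal(K[n]/K[1])]$, descending from $K[1]$ to $K$ using $p\nmid|\Gal(K[1]/K)|$ (guaranteed by $p\nmid\phi(N)$), and taking $\AJ_{K[n]}$ produces Kolyvagin cohomology classes $c_n(M)\in H^1(K,\Wpm)$.

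The second step is to verify the two local properties that power the induction. For $v\nmid n$ one needs $\res_v(c_n(M))\in\Hf{1}(K_v,\Wpm)$: for $v\nmid Np$ this is automatic from unramifiedness, while for $v\mid p$ it is exactly the Nizio{\l}/Nekov\'a\v{r}/Saito comparison recalled in the introduction. For $v=\lambda$ above some $\ell\mid n$, the triviality of $\Frob_\lambda$ on $\Wpm$ yields canonical isomorphisms between $\Hf{1}(K_\lambda,\Wpm)$, the ``singular quotient'' $H^1(K_\lambda,\Wpm)/\Hf{1}(K_\lambda,\Wpm)$, and $\Wpm$ itself. Transporting the congruence above through the good reduction of $\KS$ at $\lambda$ and the Eichler--Shimura-type description of Frobenius on \'etale cohomology then matches the singular part of $\res_\lambda(c_n(M))$ with the finite part of $\res_\lambda(c_{n/\ell}(M))$ up to a unit.

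With these ingredients Kolyvagin's argument runs as in the elliptic case. For any $s\in\Hf{1}(K,\Wpm)$, global reciprocity
\[ \sum_v\langle\res_v(c_n(M)),\res_v(s)\rangle_v=0 \]
collapses to a relation among the primes $\lambda\mid n$. Applying Chebotarev density to the extension of $K$ cut out by $\Wpm$ and a representative cocycle for $s$ lets one pick Kolyvagin primes that successively annihilate any $s$ not already detected by $y_0$. Inducting on the number of prime factors of $n$ bounds $\Hf{1}(K,\Wpm)$ uniformly in $M$ by the cyclic $\mathcal O_\pp/p^M$-module generated by the image of $\AJ_K(y_0)$. Letting $M\to\infty$ yields the finiteness of $\Sha(K,W_\pp)$ in (2); part (1) follows since any element of $\Lambda_\pp(K)\otimes\Q$ then has to lie in the one-dimensional $F_\pp$-line $F_\pp\cdot\AJ_K(y_0)\subset\Hf{1}(K,V_\pp)$, inside which $y_0$ is non-trivial by assumption.

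The main obstacle, where the higher-weight case genuinely departs from the elliptic one, is the local computation at Kolyvagin primes: Heegner cycles are higher-codimension classes on $\KS$ rather than divisors on a modular curve, so descending $y_{n\ell}\equiv\Frob_\lambda\cdot y_n\pmod\lambda$ from the Chow group to the required identity on Galois cohomology classes requires a careful analysis of Frobenius on the $\pp$-adic \'etale cohomology of the special fibre of $\KS$ at $\ell$, together with integrality control at $\pp$ for both the Abel--Jacobi map and the cycles themselves. The hypothesis $p\nmid 2N(k-2)!\phi(N)$ enters precisely here: the factor $(k-2)!$ arises from the idempotents cutting out the motive $\mathcal M_f$ inside the cohomology of $\KS$, while $2N\phi(N)$ controls denominators in the descent from $K[n]$ down to $K$.
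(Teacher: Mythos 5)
This theorem is not proved in the paper at all: it is stated as a result of Nekov\'a\v{r}, with the proof cited from \cite[Theorem 13.1]{nekovavr1992kolyvagin} (and the later remark points to \cite[p.\ 684]{nekovar2} for a relaxation of the hypothesis on $p$). So there is no internal argument in the paper for you to be compared against; the paper uses the theorem as a black box, and the appropriate comparison is with Nekov\'a\v{r}'s original argument.

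Your sketch is a reasonable high-level account of how that argument goes: an Euler system of derived Heegner cycles $c_n(M)\in H^1(K,\Wpm)$, ramified only at the Kolyvagin primes dividing $n$, whose singular components at $\lambda\,|\,\ell\,|\,n$ are related via the congruence relation to the finite components of $c_{n/\ell}(M)$; global reciprocity plus Chebotarev then bounds $\Hf1(K,\Wpm)$ in terms of the image of $y_0$, and passing to the limit gives finiteness of $\Sha(K,W_\pp)$ and the one-dimensionality of $\Lambda_\pp(K)\otimes\Q$. You also correctly flag the genuinely new technical difficulty in the higher-weight case, namely the local analysis at Kolyvagin primes via the \'etale cohomology of the special fibre of $\KS$ and the role of $(k-2)!$ in the projectors.

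One concrete slip worth correcting: you assert that $p\nmid |\Gal(K[1]/K)|$ is ``guaranteed by $p\nmid\phi(N)$.'' The degree $[K_1:K]$ is the class number $h_K$ of the imaginary quadratic field, which has nothing to do with $\phi(N)$. In Nekov\'a\v{r}'s setup one does not need $p\nmid h_K$ to carry out the descent from $K_1$ to $K$ (one uses the trace/corestriction, not an inverse to restriction); the role of $p\nmid\phi(N)$ is to invert $|B|$ with $B=\Gamma_0(N)/\Gamma(N)$ in the projector $\Pi_B$. (The hypothesis $p\nmid h_K$ does appear in the present paper's construction of the classes $c_M(n)$ in \S 4.3, but it is a separate assumption on $p$, imposed there, not a consequence of $p\nmid\phi(N)$.) Also note that the norm relation in the Euler system is usually stated after the $k/2$-twist, so the Hecke eigenvalue that appears is $a_\ell/\ell^{k/2-1}$ rather than $a_\ell$; this does not affect the shape of the argument but should be kept straight when tracking integrality.
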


Actually, as is explained in \cite[p. 684]{nekovar2}, the assumption on $p$ can be relaxed. Part (2) of Theorem \ref{nekovar-thm} was later refined by Besser in \cite{besser1997finiteness}. 

By combining Theorem \ref{nekovar-thm} with Zhang's formula of Gross--Zagier type for higher weight modular forms (\cite{zhang1997heights}) and assuming the injectivity of $\AJ_K$, one gets the analytic rank $1$ case of the Beilinson--Bloch conjecture for $f$ over $K$, according to which the $F_\pp$-dimension of $\Lambda_\pp(K)\otimes\Q$ is equal to the order of vanishing of the $L$-function $L(f\otimes K,s)$ at $s=k/2$. 

The main purpose of our paper is to prove, under the same assumption on $y_0$ as in Theorem \ref{nekovar-thm}, a structure theorem for $\Sha(K,W_\pp)$. For technical reasons (mainly because we want the Galois representation $V_\pp$ to be irreducible with non-solvable image), as in \cite{longo2013refined} we choose $p$ outside a finite set of primes that we describe in \S \ref{Cdt}.

A simplified version of our main theorem, which corresponds to Theorem \ref{main}, can be stated as follows.

\begin{theorem} \label{main-intro}
Choose the prime $p$ as in \S \ref{Cdt}. If $y_0$ is non-torsion then there exist finitely many integers $N_i\geq0$ that can be explicitly described in terms of Heegner cycles such that 
\[  \Sha(K,W_\pp)\simeq\bigoplus_i \bigl(\Z/p^{N_i}\Z\bigr)^2.  \]
\end{theorem}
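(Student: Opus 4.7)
The plan is to transport to the higher-weight setting the structure theorem of Kolyvagin (in the formulation of McCallum) for Shafarevich--Tate groups of elliptic curves, using Nekov\'a\v{r}'s Euler system of Heegner cycles on $\KS$ in place of the classical Euler system of Heegner points. The key algebraic input I would aim at is a non-degenerate alternating pairing on $\Sha(K, W_\pp)$ whose invariants can be read off directly from the $p$-divisibility of derivative cohomology classes attached to Heegner cycles; the symplectic structure in the statement then follows from a purely algebraic argument about finite modules.

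First I would recall, or reconstruct in our setting, the Kolyvagin-type derivative classes
\[
c_M(n) \in H^1\bigl(K, W_\pp[p^M]\bigr)
\]
indexed by square-free products $n=\ell_1\cdots\ell_r$ of admissible ``Kolyvagin primes'' and by an integer $M\geq 1$. These are obtained by applying the Kolyvagin derivative operator to Heegner cycles over the ring class field $K[n]$ and corestricting down to $K$; $c_M(1)$ is essentially the reduction of $\AJ_K(y_0)$ modulo $p^M$, which is non-zero for $M$ large by the non-torsion hypothesis on $y_0$. The local properties of the $c_M(n)$---``finite'' at places $v\nmid n$ and ``transverse'' at primes $\ell\mid n$---together with the recursive relations coming from norm-compatibility of Heegner cycles, will provide the engine for the whole argument, as in \cite{nekovavr1992kolyvagin} and \cite{longo2013refined}.

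The core step would be to construct and analyse a Cassels--Tate-type pairing
\[
\langle\,\cdot\,,\,\cdot\,\rangle \colon \Sha(K, W_\pp) \times \Sha(K, W_\pp) \longrightarrow F_\pp/\mathcal O_\pp.
\]
Its existence rests on the twisted self-duality $V_\pp^\ast(1)\simeq V_\pp$ furnished by Poincar\'e duality on $\KS$, combined with global Poitou--Tate duality and the local Bloch--Kato conditions defining $\Sha$. Using the derivative classes I would then deduce non-degeneracy: any class $s\in \Sha(K, W_\pp)$ should be detectable against some $c_M(n)$ via the local Tate pairing at an auxiliary Kolyvagin prime $\ell\mid n$, where the ``finite'' component of $s$ pairs non-trivially with the ``transverse'' component of $c_M(n)$. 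The alternating property will require extra input: one exploits the fact that, under complex conjugation $\tau\in \Gal(K/\Q)$, Heegner cycles (and hence the classes $c_M(n)$) are eigenvectors with a definite sign determined by the parity of $k/2$ and the Atkin--Lehner signs, so that a careful sign-bookkeeping forces $\langle x,x\rangle=0$.

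Once these pieces are in place, the structure theorem is pure module theory: a finite $\mathcal O_\pp$-module carrying a non-degenerate alternating pairing into $F_\pp/\mathcal O_\pp$ is necessarily of the form $\bigoplus_i(\Z/p^{N_i}\Z)^2$, with the $N_i$ computed from the elementary divisors of the matrix of the pairing on a generating set built out of the $c_M(n)$. The hard part will be the verification of the alternating property in this motivic setting, as opposed to the elliptic curve case: one has to simultaneously control the Bloch--Kato conditions at $p$, the $k/2$-twist in the Galois representation, and the action of complex conjugation on the higher-dimensional Heegner cycles on $\KS$, which makes the sign computation markedly more delicate and is what dictates the carefully chosen set of primes from \S\ref{Cdt}.
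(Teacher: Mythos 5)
Your high-level outline is on the right track — Kolyvagin derivative classes $c_M(n)$, the Flach--Cassels pairing, and the symplectic module-theory consequence are exactly the ingredients used — but there are two substantive problems with where you locate the difficulty.

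First, you propose to \emph{construct} the Cassels-type pairing and then \emph{prove} that it is alternating and non-degenerate, singling out the alternating property in the higher-weight setting as ``the hard part.'' In fact the paper does not re-derive any of this: the pairing $\Sha(K,W_\pp)\times\Sha(K,W_\pp)\to\Q_p/\Z_p$, its perfectness, and its alternating property are all imported wholesale from Flach (\cite[Theorems 1 and 2]{flach1990generalisation}), who works in precisely the generality needed (motivic self-dual Galois representations over number fields). Rebuilding this would be needless duplication; a citation suffices, and so your proposed ``markedly more delicate sign computation'' is not part of the argument at all.

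Second, and more seriously, your proof is missing the actual content of the theorem: that the $N_i$ can be \emph{explicitly described in terms of Heegner cycles}. You assert this would follow by ``computing elementary divisors of the matrix of the pairing on a generating set built out of the $c_M(n)$,'' but this is not an argument — it is a restatement of the goal. The paper must first \emph{define} the quantities in terms of Heegner cycles, namely $\omega(n) = \max\{M : P_M(n)=0\}$ and $M_r = \min\{\omega(n) : n\in S_r(\omega(n)+1)\}$, and then \emph{prove} that the symplectic invariants $\hat N_i$ of $\Sha(K,W_\pp)$ coincide with $N_i := M_{i-1}-M_i$. Establishing that identity is the technical heart of the paper. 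It requires: (a) the key nonvanishing/avoidance result (the analogue of McCallum's Proposition 5.2, here Proposition \ref{propfond}) that for any subgroup $C\subset\Hf1(K,W_\pp)^{\epsilon_r}$ of order $p^r$ and $M>M_r$ there exists $n\in S_r(M)$ with $\ord(c_M(n))=p^{M-M_r}$ and $\langle c_M(n)\rangle\cap C=0$, whose proof runs a delicate Chebotarev/reciprocity argument; (b) a Chebotarev-type selection of Kolyvagin primes $l_i$ compatible with a fixed maximal isotropic subgroup $D$ of $\Sha(K,W_\pp)$; (c) an explicit reduction of the global pairing of $c_M(n)$ against a lift of a generator of $D_i$ to a single non-vanishing local Tate pairing at an auxiliary prime (using the reciprocity law $\sum_v\langle\cdot,\cdot\rangle_v=0$ and the explicit local descriptions of $c_M(n)$); and (d) Besser's non-degeneracy lemma for local Tate pairings of elements of large order. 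Without these steps the passage from ``there is an alternating perfect pairing'' to ``the $N_i$ are the differences $M_{i-1}-M_i$'' is unbridged, and the theorem as stated — emphasizing the explicit description via Heegner cycles — is not proved.
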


Of course, under our assumption on $y_0$ the group $\Sha(K,W_\pp)$ is finite by part (2) of Theorem \ref{nekovar-thm}. As an application of our main result, in Corollary \ref{corollary-main} we improve on the bound on the order of $\Sha(K,W_\pp)$ given in \cite[Theorem 1.2]{besser1997finiteness}. 

As will be clear, our strategy for proving Theorem \ref{main-intro} is inspired by the original arguments of Kolyvagin, or rather by the reinterpretation of them that McCallum gives in \cite{mccallum1991kolyvagin}. However, several new ingredients are needed in the higher weight setting that we consider. Here we content ourselves with highlighting, as an example, the crucial role played by the Cassels-type pairing on $\Sha(K,W_\pp)$ that was defined by Flach in \cite{flach1990generalisation}.

Finally, in \S \ref{selmer-section} we offer an analogous (albeit somewhat less refined) result on the structure of the Selmer group $\Hf1(K,W_\pp)$. This is a generalization to higher weight modular forms of a result for Selmer groups of elliptic curves due to Kolyvagin's (\cite[Theorem 1]{kolyvagin-selmer}). Let $\Hf1(K,W_\pp)_{\mathrm{div}}$ be the maximal divisible subgroup of $\Hf1(K,W_\pp)$ and set
\[ \mathcal X:=\Hf1(K,W_\pp)\big/\Hf1(K,W_\pp)_{\mathrm{div}}. \]
Denote by CLS the set of Kolyvagin classes defined in \S \ref{sha-section}. The result we prove, corresponding to Theorem \ref{the:structsel}, can be expressed in the following simplified fashion.

\begin{theorem} \label{selmer-intro}
Choose the prime $p$ as in \S \ref{Cdt} and suppose that the set CLS is non-trivial.
There exists an integer $\mu\geq0$, which depends on the order of certain Heegner classes, such that
\begin{enumerate}
\item $\Hf1(K,W_\pp):=(\Q_p/\Z_p)^r \oplus \mathcal{X}$ with $r\leq 2\mu+1$;
\item there exist integers $N_i\geq0$, all but $\mu+1$ of which can be explicitly described in terms of Heegner cycles, such that 
\[  \mathcal{X}\simeq\bigoplus_i \bigl(\Z/p^{N_i}\Z\bigr)^2.  \]
\end{enumerate}
\end{theorem}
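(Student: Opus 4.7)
The plan is to adapt Kolyvagin's structure theorem for Selmer groups of elliptic curves \cite{kolyvagin-selmer} to our higher weight setting, running essentially the same descent that proves Theorem~\ref{main-intro} but now tracking the divisible part of $\Hf1(K,W_\pp)$. The principal inputs are the Kolyvagin classes in CLS derived from Nekov\'a\v{r}'s Heegner cycles on $\KS$, Flach's generalized Cassels pairing on $\mathcal{X}$, and Poitou--Tate duality at Kolyvagin primes.

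First, one checks that $\Hf1(K,W_\pp)$ is cofinitely generated as an $\mathcal{O}_\pp$-module, so that the decomposition $\Hf1(K,W_\pp)\simeq(\Q_p/\Z_p)^r\oplus\mathcal{X}$ is available with $\mathcal{X}$ finite. Following McCallum's reformulation \cite{mccallum1991kolyvagin}, I would then run a descending induction on the $p^M$-torsion Selmer groups $\Hf1(K,\Wpm)$: at each step a Kolyvagin prime $\ell$ is chosen, the cohomology is cut down by the local condition that the restriction at $\ell$ vanish, and one quotients by a class in CLS derived from a Heegner cycle indexed by a squarefree product of such primes. The crucial global/local input is Nekov\'a\v{r}'s reciprocity law identifying the restriction at $\ell$ of a Kolyvagin class with a Heegner class of controllable $p$-adic order, converting the descent into quantitative statements about $p$-valuations of Heegner cycles. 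Each step that strictly lowers an elementary divisor of the $p^M$-torsion produces an invariant $N_i$ that can be described explicitly in terms of orders of Heegner cycles; the even multiplicity $\mathcal{X}\simeq\bigoplus_i(\Z/p^{N_i}\Z)^2$ then follows because Flach's pairing on $\mathcal{X}$ is alternating in the self-dual higher weight situation, and any non-degenerate alternating pairing on a finite $\mathcal{O}_\pp$-module pairs up elementary divisors.

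The new phenomenon, absent from the $\Sha$ argument, is that a Kolyvagin class can land in the divisible part of $\Hf1(K,W_\pp)$ rather than in $\mathcal{X}$, contributing to $r$ without visibly lowering an elementary divisor. I would define $\mu$ as the count of such ``imperfect'' descent steps, measured in terms of orders of certain basic derived Heegner classes; the bookkeeping then yields both the bound $r\le 2\mu+1$ (each imperfect step costs two in corank, with one additional unit accounting for the basic Heegner class $y_0$ itself) and the fact that exactly $\mu+1$ of the $N_i$ remain undetermined by Heegner data. The main obstacle will be making this bookkeeping rigorous: one must track $p$-adic valuations of derivative classes uniformly across all levels $M$, rather than working modulo a fixed $p^M$ as in the $\Sha$ proof, which requires the residual representation $\Wp1$ to have sufficiently large image so that the Chebotarev arguments produce enough Kolyvagin primes at every level---this is precisely what is ensured by the hypotheses on $p$ from \S \ref{Cdt}.
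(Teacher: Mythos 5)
Your proposal follows the same broad strategy as the paper --- McCallum-style descent with Kolyvagin classes, Flach's alternating pairing on the finite quotient $\mathcal{X}$, Chebotarev to choose Kolyvagin primes --- and the overall scheme (split off the divisible part, get squares of invariants from the pairing, track $p$-adic orders of derived Heegner classes across levels $M$) matches the paper's. However, two points are left vague in a way that would block a complete argument. The central invariant $\mu$ should not be described only loosely as a ``count of imperfect descent steps'': the paper defines it precisely as $\mu := \min\set{r \mid M_r < \infty}$, that is, the \emph{vanishing order} of the system of Kolyvagin classes --- the smallest number of prime factors $n$ must have for some derivative class $c_M(n)$ to be non-zero. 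Under the non-triviality of CLS this is a well-defined finite integer, and the whole quantitative bookkeeping (via the quantities $\omega(n)$ and $M_r$ of \S\ref{ss:statement}) is organized around it; without pinning this down, the later counts are not meaningful.

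Second, your accounting for the corank bound $r \leq 2\mu + 1$ (``each imperfect step costs two in corank, with one additional unit for $y_0$'') is a heuristic that is only literally correct when $\mu = 0$: if $\mu > 0$ then the basic Heegner cycle $y_0$ is torsion (indeed $P_M(1)=0$ for all $M$), so it contributes no divisible summand, and the ``$+1$'' cannot be charged to it. The paper instead splits into eigenspaces of complex conjugation, mirroring Kolyvagin's argument in \cite{kolyvagin-selmer}: Proposition~\ref{prop:modificata} produces $\mu+1$ independent Kolyvagin classes of full order in $\Hf1(K,\Wpm)^{(\mu+1)}$, which forces $r^{(\mu+1)} = \mu + 1$ exactly, while separately $r^{(\mu)} \leq \mu$, giving $r = r^{(\mu+1)} + r^{(\mu)} \leq 2\mu + 1$. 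Once this eigenspace split is in place, the remainder of your plan --- identifying $N_i = M_{i-1} - M_i$ for $i > \mu + 1$ by the Tate-pairing computation already used in the $\Sha$ theorem, with the first $\mu + 1$ invariants unconstrained by Heegner data --- is correct and matches the paper's proof of Theorem~\ref{the:structsel}.
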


It is worth stressing that, in order to prove Theorem \ref{selmer-intro}, we assume that the system of Kolyvagin's classes that we have built is non-trivial. In fact, we conjecture that this is always the case. This conjecture is the counterpart for higher weight modular forms of \emph{Kolyvagin's conjecture} for elliptic curves, which was recently proved by W. Zhang in \cite{Zhang-Kolyvaginconj} under certain technical conditions. It would be very interesting to extend the work of Zhang to our higher weight setting.

We conclude this introduction by remarking that Euler systems of Heegner cycles have also been used by Elias in \cite{elias2015kolyvagin} to bound the size of Bloch--Kato Selmer groups of modular forms of even weight $\geq4$ twisted by ring class characters of imaginary quadratic fields, thus extending to the higher weight case a result for Heegner points on elliptic curves due to Bertolini and Darmon (\cite{BD-crelle}). Along a different line of investigation, in a very recent paper (\cite{Elias-Piquero}) Elias and de Vera-Piquero study the Euler system properties of cohomology classes built out of CM cycles on Kuga--Sato varieties that are fibered over (quaternionic) Shimura curves rather than over classical modular curves. These cycles, which were first introduced by Besser in \cite{besser2} and then exploited, e.g., by Iovita and Spie\ss\ in \cite{IS} and by Chida in \cite{Chida}, allow Elias and de Vera-Piquero to extend \cite[Theorem 13.1]{nekovavr1992kolyvagin} to the case where the imaginary quadratic field $K$ satisfies only a \emph{relaxed Heegner hypothesis} relative to $N$ (see \cite[Theorem 1.1]{Elias-Piquero}). We expect that, by using CM cycles on Kuga--Sato varieties over Shimura curves, we can prove structure theorems for $\Sha(K,W_\pp)$ and $\Hf1(K,W_\pp)$ analogous to Theorems \ref{main-intro} and \ref{selmer-intro} in the more general setting that is considered in \cite{Elias-Piquero}.

\subsubsection*{Notation and conventions}

We sometimes use the symbol $ = $ instead of $ \simeq $ when two objects are isomorphic in a canonical way.

The cardinality of a (finite) set $X$ is denoted by $|X|$.

Unadorned tensor products $ \otimes $ are always taken over $ \Z $.

If $G$ is a (finite) abelian group and $g\in G$ then we write $\ord(g)$ for the order of $g$ in $G$.

If $m\geq1$ is an integer and $p$ is a prime number then $\ord_p(m)$ is the $p$-adic valuation of $m$, i.e., the largest integer $k\geq0$ such that $p^k\,|\,m$.

We fix algebraic closures $\Qbar$ of $\Q$ and $\Qbar_l$ of $\Q_l$ for every prime number $l$; furthermore, we fix field embeddings $ \Qbar \hookrightarrow \Qbar_l $ for every $ l $.

For every field $ K $ we fix an algebraic closure $\overline K$ of $K$ and denote by $ G_K:=\Gal(\overline{K}/K) $ the absolute Galois group of $ K $.

If $K$ is a field and $M$ is a continuous $G_K$-module then we write $H^i(K,M)$ for the $i$-th cohomology group of $G_K$ with coefficients in $M$. If $L/K$ is a field extension then  
\[ \res_{L/K}:H^i(K,M)\longrightarrow H^i(L,M),\qquad\cores_{L/K}:H^i(L,M)\longrightarrow H^i(K,M) \] 
denote the restriction and corestriction maps in cohomology, respectively. If $L/K$ is finite and Galois then there is an equality 
\begin{equation} \label{res-cores-norm} 
\res_{L/K}\circ\cores_{L/K}={\bf N}_{L/K}
\end{equation} 
where ${\bf N}_{L/K}:=\sum_{\sigma\in\Gal(L/K)}\sigma$ is the norm (or trace) operator acting on $H^i(L,M)$.

\subsubsection*{Acknowledgements}
The content of this article is part of my PhD thesis at Universit\`a degli Studi di Genova. I would like to thank my advisor Stefano Vigni for the beautiful thesis topic he proposed to me and for his help and encouragement.

\section{Galois representations and Bloch--Kato Selmer groups}

In this section we recall the definition of Galois representations and of Bloch--Kato Selmer groups attached to higher weight modular forms.

\subsection{Galois representations}

Let $ f \in S^\new_k(\Gamma_0(N))$ be a normalized newform of even weight $ k \geq 4 $ and level $ \Gamma_0(N) $ with $ N\geq 3 $, whose $q$-expansion will be denoted by
\[ f(q)=\sum_{n\geq1}a_nq^n. \]
Let $ F $ be the number field generated over $ \Q $ by the Fourier coefficients $a_n$ of $ f $ and  write $ \mathcal{O}_F $ for the ring of integers of $F$. We denote by $ \mathcal{O}_f $ the ring  generated over $ \Z $ by the $a_n$, which is an order of $ F $; let $ c_f:=[\mathcal{O}_F:\mathcal{O}_f] $ be its conductor. Fix a prime number $ p $ such that
\begin{itemize}
\item $p\nmid 2N(k-2)!\phi(N)c_f $;
\item $p$ is unramified in $F$.
\end{itemize}
Here $\phi$ is Euler's function. For any $ n \geq 1 $ define the sheaves\[
\mathcal{F}_n:=\mbox{Sym}^{k-2}\left(R^1\pi_*(\Z/p^n)\right)(k/2-1), \qquad \mathcal{F}:=\varprojlim_n \mathcal{F}_n.
\]Let $ Y_N $ be the affine modular curve over $ \Q $ of level $ \Gamma(N) $ and let $ Y_N \hookrightarrow X_N$ be its proper compactification. 
Let $ B:=\Gamma_0(N)/\Gamma(N) $ and let $ \Pi_B := |B|^{-1}\sum_{b \in B} b \in \Z_p[B] $ be the projector associated with $ B $.
Define now \[
J_p:=\Pi_B \Het1 (X_N \otimes \Qbar, j_*\mathcal{F})(k/2).
\] 
Denote by $ \mathbf{T} $ the Hecke algebra over $ \Z $ generated by the Hecke operators $ T_l $ and write $ I_f $ for the kernel of the map $ \mathbf{T}\to \mathcal{O}_F $ sending $ T_l $ to $ a_l $.
The algebra $ \mathbf{T} $ acts on $ J_p $, as explained in \cite[\S 2]{nekovavr1992kolyvagin}.
We define a Galois representation\[
A_p:=\{x \in J_p \,|\, I_f \cdot x =0\}.
\]By \cite[Proposition 3.1]{nekovavr1992kolyvagin}, we know that $ A_p $ is a free $ \mathcal{O}_f \otimes \Z_p $-module of rank $ 2 $ and that the absolute Galois group $ G_\Q $ acts $ \mathcal{O}_f $-linearly on it.
If $ l $ is a prime not dividing $ Np $ then the characteristic polynomial of the arithmetic Frobenius $ F_l $ at $ l $ on $ A_p $ is equal to\[
\text{det}(1-XF_l|A_p) = 1 - \frac{a_l}{l^{k/2-1}}X+l^2X.
\]Since $ p\nmid c_f $, there is a canonical identification $ \mathcal{O}_F\otimes \Z_p = \mathcal{O}_f\otimes \Z_p $, so that $ A_p $ is a free $ \mathcal{O}_F\otimes \Z_p $-module of rank $ 2 $.

Fix once and for all a prime $ \pp $ of $ F $ above $ p $ and set $ A_\pp:=A_p \otimes_{\mathcal{O}_F\otimes \Z_p}  \mathcal{O}_\pp$. The $\mathcal O_\pp$-module $A_\pp$ is free of rank $2$ and is equipped with an action of $G_\Q$.
Furthermore, put $ V_\pp:=A_\pp\otimes_{\mathcal{O}_\pp} F_\pp$.
The Galois representation $ V_\pp $ is the $k/2$-twist of Deligne's $ \pp $-adic realization (\cite{deligne1971formes}) of the motive (denoted by $ \mathcal{M}_f $ in the introduction) associated with $ f $ by Jannsen~(\cite{jannsen1990mixed}) and Scholl~(\cite{scholl1990motives}).
Finally, define $ W_\pp:=A_\pp\otimes\Q_p/\Z_p $, so that $ \Wpm =A_\pp/p^M A_\pp $ for all $ M\geq 1 $.

\subsection{Bloch--Kato Selmer groups}\label{sectselsha}

Let $ K $ be a number field and let $ \mathcal{V} $ be a $ p $-adic representation of $G_K $, i.e., a finite-dimensional $ \Q_p $-vector space $ \mathcal{V} $ equipped with a continuous action of $ G_K $. Assume that $\mathcal V$ is unramified outside a finite set $\Xi$ of places of $K$ containing the archimedean ones and those dividing $p$. Let $ v $ be a finite place of $ K $. If $ v\,|\,p $ then the finite part of the local cohomology is\[
H^1_f(K_v,\mathcal{V}):= \ker\left(\Hc1(K_v,\mathcal{V})\longrightarrow \Hc1(K_v,\mathcal{V}\otimes_{\Q_p} B_\text{cris})\right)\!,
\]where $ B_\text{cris} $ is Fontaine's ring of crystalline periods. If $ v\nmid p $ then we definite the finite part as the unramified cohomology\[
\Hf1(K_v,\mathcal{V}) = H^1_\ur(K_v,\mathcal{V}):= \ker \left(\Hc1(K_v,\mathcal{V})\longrightarrow \Hc1(I_v,\mathcal{V})\right)\!,
\]where, $ I_v$ is the inertia subgroup of $ G_{K_v} $.

The \emph{Bloch--Kato Selmer group} $\Hf1(K,\mathcal{V})$ is the subgroup of $\Hc1(K,\mathcal{V})$ consisting of those classes whose restrictions at all places $ v $ of $K$ lie in $ \Hf1(K_v,\mathcal{V}) $. If $G_{K,\Xi}$ is the Galois group over $K$ of the maximal extension of $K$ uramified outside $\Xi$ then $\mathcal V$ can naturally be viewed as a representation of $G_{K,\Xi}$ and $H^1_f(K,\mathcal V)$ is a subspace of the finite-dimensional $\Q_p$-vector space $\Hc1(G_{K,\Xi},\mathcal V)$, hence $H^1_f(K,\mathcal V)$ has finite dimension over $\Q_p$.

In our setting, since $ A_\pp $ is a free $ \mathcal{O}_\pp $-module and hence a flat $ \Z_p$-module, there is a short exact sequence\begin{align}
0 \longrightarrow A_\pp \overset{i}{\longrightarrow} V_\pp=A_\pp \otimes_{\Z_p} \Q_p \overset{\pi}{\longrightarrow} W_\pp=A_\pp \otimes_{\Z_p} \Q_p/\Z_p\longrightarrow 0.\label{defvp}
\end{align}Moreover, for each $ M \geq 1$ there is a projection\begin{align}
A_\pp \overset{\bar{\pi}}{\longepi }\Wpm.
\end{align}Finally, for each finite place $v$ of $K$ we define\begin{align}
H^1_f(K_v,A_\pp) &:= i^{-1}\left( H^1_f(K_v,V_\pp)\right)\!,& \\
H^1_f(K_v,W_\pp) &:= \im\left(H^1_f(K_v,V_\pp) \longmono H^1(K_v,V_\pp) \overset{\pi}{\longrightarrow} H^1(K_v,W_\pp) \right)\!,\\
H^1_f(K_v,\Wpm) &:= \im\left(H^1_f(K_v,A_\pp) \longmono H^1(K_v,A_\pp) \overset{\bar{\pi}}{\longrightarrow} H^1(K_v,W_\pp) \right)\!.
\end{align}Observe that, with an abuse of notation, we use the same symbols for the maps induced in cohomology.
\begin{remark}\label{rem:diffNeknoi}
The definition we have given of the Bloch--Kato Selmer group is \emph{a priori} slightly different from the one in \cite{nekovavr1992kolyvagin}, as Nekov\'a\v{r} does not impose any condition at the places $ v\,|\, N$.
\end{remark}

\section{Kuga--Sato varieties and Shafarevich--Tate groups}

The ultimate goal of this section is to introduce Shafarevich--Tate group of higher weight modular forms, for which the main result of this paper gives a structure theorem.

\subsection{Kuga--Sato varieties}

Let $ \pi: \mathcal{E}_N \to  Y_N$ be the universal elliptic curve (we denote the universal generalized elliptic curve by $ \overline{\pi}: \overline{\mathcal{E}}_N \to X_N $). 
Consider now the $(k-2)$-fold fiber product $ \overline{\pi}_{k-2} :\overline{\mathcal{E}}^{k-2}_N \to X_N $ of $ \overline{\mathcal{E}}_N $ with itself. In the case we are interested in, i.e., when $ k \geq 4 $, $ \overline{\mathcal{E}}^{k-2}_N $ is singular; we denote its canonical desingularization constructed by Deligne (\cite{deligne1971formes}) by $ \KS $ and we call it the \emph{Kuga--Sato variety} of level $ N $ and weight $k $.

If $ \mathscr{E}_N $ is the N\'eron model of $ \overline{\mathcal{E}}_N $ over $ X_N $ then the level $ N $ structure on $ \overline{\mathcal{E}}_N $ determines a homomorphism of group schemes\[ (\Z/N\Z)^2 \times X_N \longmono \mathscr{E}_N.\]
There are simultaneous actions on $ \overline{\mathcal{E}}_N $ of $ \Z/N\Z $ by translation and of $ \Z/2\Z $ as multiplication by $ -1 $ in the fibers, which induce an action of $ (\Z/N\Z)^2 \rtimes \Z/2\Z$.
There is also the obvious action of the symmetric group $ S_{k-2} $ by permutation of the factors of $ \overline{\mathcal{E}}^{k/2}_ N$, hence there is a action of \[
\Gamma_{k-2}:=\left((\Z/N\Z)^2 \rtimes \Z/2\Z\right)^{k-2}\rtimes S_{k-2}
\]on $ \overline{\mathcal{E}}^{k-2}_N $. This action can be extended canonically to an action on $ \KS $.
Let $ \epsilon:\Gamma_{k-2} \to \{\pm 1\} $ be the homomorphism that is trivial on $ (\Z/N\Z)^{2(k-2)} $, is the product on $ (\Z/2\Z)^{k-2} $ and the sign on $ S_{k-2} $.
We denote by $ \Pi_\epsilon \in \Z[1/2N(k-2)!][\Gamma_{k-2}]$ the projector associated with $ \epsilon $.

A result of Nekov\'a\v{r}~(\cite[Proposition 2.1]{nekovavr1992kolyvagin}) tells us that there is a canonical identification \[
\Het1(X_N\otimes \Qbar,j_*\mathcal{F}_n)(1)=\Pi_\epsilon\Het{k-1}(\KS\otimes\Qbar,\Z/p^n\Z)(k/2).
\]Moreover, by \cite[Lemma 2.2]{nekovavr1992kolyvagin} we know that $ \Het1(X_N,j_*\mathcal{F}) $ is torsion-free, hence there exists a canonical isomorphism\[ \Het1(X_N,j_*\mathcal{F}/p^m j_*\mathcal{F}) = \Het1(X_n,j_*\mathcal{F})/p^m \Het1(X_n,j_*\mathcal{F}).\]
Finally, we get a map\begin{align}
\Het{k-1}\bigl(\KS\otimes\Qbar,\Z_p(k/2)\bigr)\longrightarrow J_p\longrightarrow A_p \label{hetap}
\end{align}that factors through $ \Pi_\epsilon \Het{k-1}\bigl(\KS\otimes\Qbar,\Z/p^n\Z\bigr)(k/2)$.

\subsection{$p$-adic Abel--Jacobi maps}

Fix a field $ L $ of characteristic $0$ and let $ \Phi_{p,L} $ be the $ p $-adic Abel--Jacobi map\begin{align}
\Phi_{p,L}: \CH{\KS/L}\longrightarrow \Hc1 \left(  L, \Het1\bigl( \KS\otimes \overline{L}, \Z_p (k/2)\bigr) \right),\label{firstAJ}
\end{align}a full description of which can be found in \cite{jannsen1990mixed}. Here we denote by $ \Hc\bullet $ continuous Galois cohomology and by $ \CH{\KS/L} $ the subgroup of $ \ch(\KS/L) $ of  homologically trivial cycles of codimension $ k/2 $ defined over $ L $.
It is convenient to use also the equivalent definition of $\CH{\KS/L} $ given by\begin{align}
\CH{\KS/L}=\ker \left(\ch(\KS/L)\longrightarrow \Het{k}\bigl(\KS\otimes\overline{L},\Z_p(k/2)\bigr)\right)\!.\label{altChow}
\end{align}
It can be checked that this definition is independent of the choice of $ p $ (see, e.g., \cite[\S1.3]{nekovar2000padic}).
If we compose \eqref{hetap} with \eqref{altChow} and extend by $\Z_p $-linearity then we obtain a map\begin{align}
\AJ_{f,p,L}:\CH{\KS/L} \otimes \Z_p \longrightarrow \Hc1(L,A_p).\label{ajfpL}
\end{align}
Since $ A_p=\prod_{\pp\,|\, p}A_\pp $, there is a canonical projection $ A_p \twoheadrightarrow A_\pp $ that combined with $ \AJ_{f,p,L} $ gives rise to an $ \mathcal{O}_\pp $-linear map\begin{align}
\AJ_{f,\pp,L}:\CH{\KS/L} \otimes \mathcal{O}_\pp \longrightarrow \Hc1(L,A_\pp).\label{ajfppL}
\end{align}If $ L/L' $ is Galois then $ \AJ_{f,\pp,L} $ is $ \Gal(L/L') $-equivariant (\cite[Proposition 4.2]{nekovavr1992kolyvagin}).

The map $ \AJ_{f,p,L} $ factors through\begin{align}
\Pi_\epsilon\bigl(\ch^{k/2}(\KS/L)\otimes \Z_p\bigr) = \Pi_\epsilon\bigl(\CH{\KS/L}\otimes \Z_p\bigr), \label{ugNek}	
\end{align}where the equality follows from \cite[Proposition 2.1]{nekovavr1992kolyvagin}. Therefore we obtain a map\begin{align}
\Psi_{f,\pp,L}:\Pi_B\Pi_\epsilon\left(\CH{\KS/L} \otimes \mathcal{O}_\pp\right) \longrightarrow \Hc1(L,A_\pp)
\end{align}that is both $ \Gal(L/\Q) $-equivariant and $ \mathbb{T} $-equivariant by \cite[Proposition 4.2]{ nekovavr1992kolyvagin}.

Now we specialize to number fields. Let $ E $ be a number field and let $ \Xi $ be a set of places of $ E $ containing the archimedean ones and those dividing $ Np $. It is well known that the $G_E$-representation $\Het1\bigl( \KS\otimes \overline{E}, \Q_p (k/2)\bigr)$ is unramified outside $\Xi$.
Let \begin{align}
\Phi_{p,E}\otimes \Q_p: \CH{\KS/E}\longrightarrow \Hc1 \left(  L, \Het1( \KS\otimes \overline{E}, \Q_p (k/2))\right)\label{secondAJ}
\end{align}be the map induced by \eqref{firstAJ}.
\begin{theorem}[Nizio\l, Nekov\'a\v{r}, Saito]
There is an inclusion \begin{align}
\im(\Phi_{p,E}\otimes \Q_p)\subset \Hf1\left(  E, \Het1( \KS\otimes \overline{E}, \Q_p (k/2))\right). \label{factinSelm}
\end{align}
\begin{proof}
See, e.g., \cite[Theorem 2.4]{longo2013refined}.
\end{proof}
\end{theorem}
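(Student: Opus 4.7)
The plan is to verify the local Selmer condition at each place $v$ of $E$ separately, for a fixed homologically trivial cycle $Z\in\CH{\KS/E}$. Set $V:=\Het{k-1}(\KS\otimes\Qbar,\Q_p(k/2))$. At archimedean places the condition is automatic, and by the functoriality of the $p$-adic Abel--Jacobi map with respect to base change,
$\res_{E_v/E}\bigl((\Phi_{p,E}\otimes\Q_p)(Z)\bigr)$ equals the Abel--Jacobi image of the pullback $Z_{E_v}$ in $\Hc1(E_v,V)$; this reduces the problem to a local statement at each finite $v$.

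At a place $v\nmid Np$, the curve $X_N$ has good reduction and Deligne's desingularisation is compatible with smooth base change, so $\KS$ admits a smooth proper model over $\mathcal{O}_{E_v}$ and $V$ is unramified as a $G_{E_v}$-module. Using the Hochschild--Serre spectral sequence together with proper-smooth base change and the existence of an integral cycle class, one realises the Abel--Jacobi class of $Z$ by a cocycle factoring through $\Gal(E_v^{\ur}/E_v)$, so it lies in $\Hur1(E_v,V)=\Hf1(E_v,V)$.

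At a place $v\mid N$ with $v\nmid p$, the integral model may be singular. One invokes Saito's results on the weight-monodromy filtration of $\Het{k-1}(\KS\otimes\Qbar_v,\Q_p)$: passing to a finite extension $E_v'/E_v$ over which the Kuga--Sato variety acquires semistable reduction, the weight filtration forces homologically trivial cycles to land in $\Hf1(E_v',V)$, and the condition descends to $E_v$ via the identity $\res\circ\cores={\bf N}$ applied to $H^1$ together with the fact that the extension degree $[E_v':E_v]$ is invertible in the relevant coefficient ring after tensoring with $\Q_p$.

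The deepest case is $v\mid p$, where one must show that the restriction lies in the kernel of the natural map $\Hc1(E_v,V)\to\Hc1\bigl(E_v,V\otimes_{\Q_p}B_{\mathrm{cris}}\bigr)$. The main input is Nizio\l's comparison between syntomic and continuous $p$-adic \'etale cohomology, which allows one to factor $\Phi_{p,E_v}$ through a syntomic Abel--Jacobi map whose image is automatically crystalline. When $v$ divides both $N$ and $p$, one needs the potentially semistable refinement of this formalism (Nekov\'a\v{r}'s treatment via Fontaine--Messing, complemented by Saito's work). This $p$-adic Hodge-theoretic input is the main obstacle: it is substantially harder than the prime-to-$p$ cases, and it is genuinely Nizio\l's theorem (together with Saito's potentially-semistable extension) that makes the inclusion \eqref{factinSelm} hold at primes above $p$.
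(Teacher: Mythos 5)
The paper's ``proof'' of this theorem is nothing more than a pointer to \cite[Theorem 2.4]{longo2013refined} (which in turn cites Nizio\l, Nekov\'a\v{r} and Saito); the paper does not carry out any argument. Your sketch reconstructs what is in effect the standard proof behind that citation, and it is substantively correct: the decomposition into archimedean places, good primes $v\nmid Np$, bad primes $v\mid N$, $v\nmid p$, and primes $v\mid p$ is exactly the right organisation, and you correctly identify the three technical pillars (unramifiedness of $V=\Het{k-1}(\KS\otimes\Qbar,\Q_p(k/2))$ plus smooth proper base change at good primes; the weight--monodromy filtration, due to Saito, at primes of bad reduction away from $p$; and Nizio\l's comparison of syntomic with continuous $p$-adic \'etale cohomology, in the form developed by Nekov\'a\v{r} and complemented by Saito, at primes above $p$). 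You also tacitly correct the paper's typo $\Het1$ in \eqref{secondAJ}, which should read $\Het{k-1}$ to match \eqref{hetap} and \cite[Prop.~2.1]{nekovavr1992kolyvagin}.

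One small point: the paper fixes $p\nmid N$ from the start, so the case ``$v$ divides both $N$ and $p$'' never arises here; your mention of the potentially semistable refinement for that case is extra generality, harmless but not needed for the statement as stated. Also, in your $v\mid N$, $v\nmid p$ step, what the weight--monodromy input actually gives is that the entire group $H^1(E_v,V)$ is unramified (since $V$ is pure of weight $-1$, the quotient $H^1(E_v,V)/\Hur1(E_v,V)\hookrightarrow H^1(I_v,V)^{\Frob_v}$ vanishes), so the conclusion is not specific to Abel--Jacobi images of homologically trivial cycles; phrasing it as ``forces homologically trivial cycles to land in $\Hf1$'' slightly obscures this. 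Neither point affects the correctness of the outline.
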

To simplify our notation we will use the symbol $ \AJ_E $ for the map $\AJ_{f,\pp,E}$ in ~\eqref{ajfppL}.

\begin{corollary} \label{inclusions-corollary}
There are the following inclusions:
\begin{enumerate}
\item $ \im(\AJ_E\otimes F_\pp) \subset \Hf1(E,V_\pp) $;
\item $ \im(\AJ_E) \subset \Hf1(E,A_\pp) $;
\item $ \im\bigl(\AJ_{E}\otimes (\mathcal{O}_\pp/p^M\mathcal{O}_\pp)\bigr)\subset \Hf1(E,\Wpm) $.
\end{enumerate}
\end{corollary}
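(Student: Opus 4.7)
The plan is to deduce all three inclusions from the Nizioł--Nekovář--Saito theorem above by invoking functoriality of the Bloch--Kato finite part under morphisms of $p$-adic Galois representations. First I would note that, combining the identification in \cite[Proposition 2.1]{nekovavr1992kolyvagin} with the Hecke-equivariant projection onto the $f$-isotypic component and with the further projection onto the $\pp$-component, the map $\AJ_E\otimes F_\pp$ factors as $\Phi_{p,E}\otimes\Q_p$ followed by a chain of $G_E$-equivariant morphisms of $p$-adic representations ending at $V_\pp$. Since any such morphism preserves the local Bloch--Kato condition at each finite place (at $v\nmid p$ because it respects the inertia action, at $v\,|\,p$ because tensoring with $B_{\mathrm{cris}}$ is functorial), it also preserves the global $\Hf1$, which by definition is cut out by the local conditions. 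Applying this functoriality to the theorem yields (1).

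For (2), fix $z\in\CH{\KS/E}\otimes\mathcal O_\pp$ and set $c:=\AJ_E(z)\in\Hc1(E,A_\pp)$. The image $i(c)$ in $\Hc1(E,V_\pp)$ coincides with $(\AJ_E\otimes F_\pp)(z\otimes 1)$ and hence lies in $\Hf1(E,V_\pp)$ by (1); localizing at a finite place $v$ gives $\res_v\bigl(i(c)\bigr)\in\Hf1(E_v,V_\pp)$, so by the very definition $\Hf1(E_v,A_\pp)=i^{-1}\bigl(\Hf1(E_v,V_\pp)\bigr)$ in \S\ref{sectselsha} one concludes $\res_v(c)\in\Hf1(E_v,A_\pp)$, and therefore $c\in\Hf1(E,A_\pp)$. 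Part (3) is then immediate: the map $\AJ_E\otimes(\mathcal O_\pp/p^M\mathcal O_\pp)$ factors as $\bar\pi_*\circ\AJ_E$, so by (2) its image sits inside $\bar\pi_*\bigl(\Hf1(E,A_\pp)\bigr)$, and localizing at each $v$ lands in $\bar\pi_*\bigl(\Hf1(E_v,A_\pp)\bigr)=\Hf1(E_v,\Wpm)$ again by the definition in \S\ref{sectselsha}.

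Essentially the whole argument is formal once (1) is in place. The one step that I expect to require real care---and thus the main obstacle---is verifying that the Hecke projection $J_p\otimes\Q_p\twoheadrightarrow V_\pp$ at primes above $p$ genuinely respects the crystalline condition; this is standard but rests on the functoriality of Fontaine's crystalline formalism rather than on purely combinatorial manipulations, and is what makes the transfer from the ambient étale cohomology of $\KS$ down to the $\pp$-adic realization $V_\pp$ legitimate.
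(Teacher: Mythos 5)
Your argument is correct, and since the paper simply cites \cite[Corollary 2.7]{longo2013refined} for this corollary, the natural deduction you supply is exactly what one would expect that reference to contain. Deducing (1) from the Nizio\l--\Nekovar--Saito inclusion via functoriality of the local Bloch--Kato conditions under $G_E$-equivariant morphisms of $p$-adic representations (the projectors $\Pi_B$, $\Pi_\epsilon$, the Hecke projection onto the $f$-isotypic part, and the projection $A_p\twoheadrightarrow A_\pp$, all tensored with $\Q_p$) is the right move, and (2), (3) then follow formally from the definitions $\Hf1(E_v,A_\pp)=i^{-1}\bigl(\Hf1(E_v,V_\pp)\bigr)$ and $\Hf1(E_v,\Wpm)=\bar\pi_*\bigl(\Hf1(E_v,A_\pp)\bigr)$, together with the canonical identification $\Hc1(E,A_\pp)\otimes_{\Z_p}\Q_p\simeq\Hc1(E,V_\pp)$ (which you use implicitly in passing from $c$ to $i(c)$ in (2)).

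One small remark: the ``main obstacle'' you flag at the end is not really an obstacle. A $G_{E_v}$-equivariant map $W\to W'$ of $\Q_p$-representations automatically induces a commutative square comparing $H^1(E_v,W)\to H^1(E_v,W\otimes B_{\mathrm{cris}})$ with the corresponding square for $W'$, so it carries $\Hf1(E_v,W)$ into $\Hf1(E_v,W')$ with no further input about the shape of the map (direct summand projection or otherwise). The genuine content lies entirely in the cited theorem establishing the inclusion for the ambient \'etale cohomology of $\KS$; everything after that, including the crystalline compatibility you worry about, is indeed formal.
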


\begin{proof}
This is \cite[Corollary 2.7]{longo2013refined}.
\end{proof}

\subsection{Shafarevich--Tate groups}\label{defsha}

For any number field $ E $ we define\begin{align}
\Lambda_\pp(E):=\im(\AJ_E).
\end{align}
By \cite[Proposition 2.8]{longo2013refined}, which is essentially a consequence of part (3) of Corollary \ref{inclusions-corollary}, there is an isomorphism\begin{align}
\Lambda_\pp(E)/p^M\Lambda(E)\simeq \im\bigl(\AJ_{E}\otimes (\mathcal{O}_\pp/p^M\mathcal{O}_\pp)\bigr).\label{quotientimage}
\end{align}
The isomorphism \eqref{quotientimage} allows us to define the \emph{$p^M$-part $\Sha_{p^M}(K,W_\pp)$ of the Shafarevich--Tate group} of $W_\pp$ over $K$ by the short exact sequence\begin{align}
0 \longrightarrow  \Lambda_\pp(K)/p^M\Lambda_\pp(K) \longrightarrow  \Hf1(K,\Wpm) \longrightarrow  \Sha_{p^M}(K,W_\pp) \longrightarrow 0.\label{injLinSel}
\end{align}
It is known that $\Hf1(K,\Wpm)$ is finite, hence $\Sha_{p^M}(K,W_\pp)$ is finite as well.
Taking direct limits in \eqref{injLinSel}, we define the \emph{Shafarevich--Tate group} $\Sha(K,W_\pp)$ of $W_\pp$ over $K$ by the short exact sequence\begin{align}
0 \longrightarrow \Lambda_\pp(K)\otimes \Q_p/\Z_p \longrightarrow \Hf1(K,W_\pp) \longrightarrow \Sha(K,W_\pp) \longrightarrow 0.
\end{align}
Our main result (Theorem \ref{main}) gives, under a non-triviality assumption on a certain Heegner cycle in $\Lambda_\pp(K)$, a structure theorem for $\Sha(K,W_\pp)$.

\begin{remark}
Because of the differences in the definition of the Bloch--Kato Selmer group (cf. Remark~\ref{rem:diffNeknoi}), our definition of the Shafarevich--Tate group slightly differs from the one given by Nekov\'a\v{r} in \cite{nekovavr1992kolyvagin}.
\end{remark}

\section{Heegner cycles and Kolyvagin classes}

In this section we define Heegner cycles in the cohomology of Kuga--Sato varieties and the Kolyvagin classes built out of them.

\subsection{Heegner cycles}\label{sec:cycles}

Let $ K = \Q(\sqrt{-D}) $ be an imaginary quadratic field in which all prime factors of $ N $ split (we say that $ K $ satisfies the \emph{Heegner hypothesis} relative to $ N $).
For simplicity, we assume that $ D \neq 1,3 $, so that $ \mathcal{O}_K^\times=\{\pm 1\} $.
Fix once and for all an embedding $ K\hookrightarrow \C $.

Let $ \mathcal{N} \subset \mathcal{O}_K $ be an ideal such that $ \mathcal{O}_K/\mathcal{N}\simeq \Z/N\Z $. The injection $ \mathcal{O}_K \hookrightarrow \mathcal{N}^{-1} $ induces an $  N $-isogeny $ \C/\mathcal{O}_K \to \C/\mathcal{N}^{-1} $ and hence a point $ x_1 \in  X_0(N) $.
By the theory of complex multiplication, the point $ x_1 $ is rational over $ K_1 $, the Hilbert class field of $ K $.
For every integer $ n \geq 1 $ coprime to $ N $  we define $ \mathcal{O}_n:=\Z+n \mathcal{O}_K  $; the isogeny $ \C/\mathcal{O}_n \to \C/(\mathcal{O}_n \cap \mathcal{N})^{-1} $ defines a point $ x_n \in X_0(N)$ rational over $ K_n $, the ring class field of $ K $ of conductor $ n $.

Fix now a square-free product $ n $ of primes $ l \nmid NDp $. Let $ \pi:X_N \to X_0(N) $ be the canonical projection, and choose $ x \in \pi^{-1}(x_n) $; we write $ E_x $ for the corresponding elliptic curve of full level $ N $ and complex multiplication by $ \mathcal{O}_n $.
Fix the unique square root $ \xi_n = \sqrt{-Dn^2} $ with positive imaginary part (using the embedding of $ K $ into $ \C $).

For any $ a \in \mathcal{O}_n$ let $ \Gamma_{n,a}  \subset E_x\times E_x $ be the graph of $ a $ and let $ i_{x} : \overline{\pi}^{-1}_{k-2} (x) = E_x^{k-2} \hookrightarrow  \widetilde{\mathcal{E}}_N^{k-2}$.
Then we have \[ \tilde x_{n}:=\Pi_B\Pi_\varepsilon (i_x)_*\bigl(\Gamma^{(k-2)/2}_{n,\xi_n}\bigr) \in \Pi_B\Pi_\varepsilon\bigl(\mbox{CH}^{k/2}(\widetilde{\mathcal{E}}_N^{k-2}/K_n)\otimes \Z_p\bigr), \]and finally  we define the \emph{Heegner cycle} \[
y_{n,\pp}:=\Psi_{f,\pp,K_n}(\tilde x_{n}) \in\Hc1(K_n,A_\pp).\]
Note that $ y_{n,\pp}\in \Lambda_\pp(K_n) $, as the Abel--Jacobi map $ \AJ_{K_n} $ factors through $\Psi_{f,\pp,K_n}  $.

\subsection{A distinguished set of primes}\label{Cdt}

From here on we assume, as in the introduction, that
\begin{itemize}
\item the form $f$ is not CM in the sense of \cite[p. 34, Definition]{ribet}.
\end{itemize}
By \cite[Lemma 3.8]{longo2013refined}, which is a consequence of \cite[Theorem 3.1]{ribet2}, there are only finitely many primes $ p $ satisfying at least one of the following conditions:
\begin{itemize}
\item $p\,|\, 6ND(k-2)!\phi(N)c_f$ and $p$ ramifies in $F$;
\item the image of the $p$-adic representation\[
\rho_{f,p}: G_\Q \longrightarrow \GL_2(\mathcal{O}_F \otimes \Z_p)
\]attached to $f$ by Deligne~(\cite{deligne1971formes}) does not contain the set\[
\bigl\{g \in \GL_2(\mathcal{O}_F \otimes \Z_p) \,|\, \text{det}(g)\in (\Z_p^\times)^{k-1}\bigr\}.
\]
\end{itemize}
We fix from now on a prime $p$ not in this set and for every $ M\geq1 $ define\[
\tilde{S}_1(M):=\bigl\{l \text{ prime }|\, l \text{ inert in }K,\, l\nmid N \text{ and }p^M \,|\, l+1\bigr\}.
\]Moreover, for every $n\in\N$ we set\begin{align*}
\tilde{S}_n(M):=&\bigl\{\text{square-free products of $n$ primes in $\tilde{S}_1(M)$}\bigr\},\qquad \tilde{S}(M):=\bigcup_{n\in \N} \tilde{S}_n(M).
\end{align*}In particular, $\tilde{S}_0(M)=\{1\}$ for all $M$.
By \cite[Lemma 3.14]{darmon1992refined}, the primes in $\tilde{S}_1(M)$ are exactly the primes $ l $ such that $\Frob_l = \Frob_\infty$ in the Galois group $\Gal(K(\mu_{p^M})/\Q)$ where $\mu_{p^M}$ denotes the group of $p^M$-th roots of unity in $\Qbar$.
By Chebotarev's density theorem, the set $\tilde{S}_1(M)$ is infinite.
Finally, we define\begin{align}\mathsf{M}(n):=\set{M\geq1\mid n \in \tilde{S}(M)}.\label{defMn}\end{align}

\subsection{Kolyvagin classes}\label{sec::kolyvaginclasses}

Let $ n \in \tilde{S}(M) $ and set $ \Gamma_n:=\Gal(K_n/K) $, $ G_n:=\Gal(K_n/K_1) $.
By class field theory, $ G_n=\prod_{l\mid n} G_l$ where $ l $ varies over the prime numbers dividing $ n $ and $ G_l=\Gal(K_l/K_1) $ is cyclic of order $ l+1 $. 
For every prime $ l\,|\, n $ fix a generator $ \sigma_l $ of $ G_l $ and consider the Kolyvagin derivative\begin{align}
D_l:=\sum_{i=1}^l i\sigma_l^i \in \Z[G_l].
\end{align}Moreover, set $ D_n:=\prod_{l\,|\, n}D_l  \in \Z[G_n]$ and $\widetilde{P}(n) := D_n (y_{n,\pp})$.

For every integer $ M\geq 1 $ we write $\widetilde{P}_M(n)$ for the image of $\widetilde{P}(n)$ in $\Lambda_\pp(K_n)/p^M\Lambda_\pp(K_n)$.
In the following we denote by $ \mathbf{N} \in \Z[\Gamma_1]$ the trace operator from $ K_1 $ to $ K $.
By \cite[Lemma 3.13]{longo2013refined}, there is a well-defined action of $ \mathbf{N} $ on   $\widetilde{P}_M(n)$, so we can define\begin{align}\label{PfixGal}
P_{M}(n):=\mathbf{N}\bigl( \widetilde{P}_M(n) \bigr)\in \bigl(\Lambda_\pp(K_n)/p^M\Lambda_\pp(K_n)\bigr)^{\Gamma_n}.
\end{align}
By an abuse of notation, we often use the same symbol for an element of $ \Lambda_\pp(K_n)/p^M\Lambda_\pp(K_n) $ and its image in $H^1(K_n,\Wpm)$.

The following lemma will be crucial for our arguments.
\begin{lemma}\label{lemma3.10}
If the extension $E/\Q$ is solvable then
\begin{enumerate}
\item $V_\pp(E)=0$;
\item $W_\pp[p^t](E)=0$ for all $t\geq 1$.
\end{enumerate}
\begin{proof}
This is \cite[Lemma 3.10]{longo2013refined}.
\end{proof}
\end{lemma}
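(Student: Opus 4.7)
The plan is to reduce everything to the irreducibility of $V_\pp$ and of $W_\pp[p]$ as $G_\Q$-modules, which follows from the big image hypothesis in \S \ref{Cdt}, and then exploit the fact that invariants under a normal subgroup form a submodule stable under the ambient Galois group.

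First I would replace $E$ by its Galois closure $\tilde E$ in $\Qbar$. Since $E/\Q$ is solvable, so is $\tilde E/\Q$, and $G_{\tilde E}$ is a normal subgroup of $G_\Q$ with solvable quotient $\Gal(\tilde E/\Q)$. Because $G_{\tilde E}\subseteq G_E$, it suffices to prove $V_\pp^{G_{\tilde E}}=0$ and $W_\pp[p^t]^{G_{\tilde E}}=0$. The point of passing to $\tilde E$ is that, for any $G_\Q$-module $M$, the subspace $M^{G_{\tilde E}}$ is stable under the action of $G_\Q$, since for $\sigma\in G_\Q$ and $\tau\in G_{\tilde E}$ one has $\sigma^{-1}\tau\sigma\in G_{\tilde E}$ by normality.

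For part (1), suppose by contradiction that $V_\pp^{G_{\tilde E}}\neq 0$. Then $V_\pp^{G_{\tilde E}}$ is a nonzero $F_\pp[G_\Q]$-stable subspace of $V_\pp$. The choice of $p$ in \S \ref{Cdt} guarantees that $\rho_{f,\pp}(G_\Q)$ contains the image of $\bigl\{g\in\GL_2(\mathcal O_F\otimes\Z_p)\mid \det(g)\in(\Z_p^\times)^{k-1}\bigr\}$ in $\GL(V_\pp)$; in particular $V_\pp$ is an irreducible $G_\Q$-representation. Hence $V_\pp^{G_{\tilde E}}=V_\pp$, i.e.\ $\rho_{f,\pp}$ kills $G_{\tilde E}$ and therefore factors through the solvable group $\Gal(\tilde E/\Q)$. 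This contradicts the fact that, for $p$ as in \S \ref{Cdt}, the image of $\rho_{f,\pp}$ contains a conjugate of $\SL_2(\F_p)$ (which is non-solvable, as $p\geq 5$ by our choice).

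For part (2), I would induct on $t$. For the base case $t=1$, the same argument applied to the residual representation works: the big image assumption implies that $W_\pp[p]$ is irreducible as an $\F_p[G_\Q]$-module (its mod $\pp$ image still contains $\SL_2(\F_p)$), so $W_\pp[p]^{G_{\tilde E}}$ is either $0$ or all of $W_\pp[p]$, and the second option would again force $\bar\rho_{f,\pp}$ to have solvable image. For the inductive step, given $v\in W_\pp[p^t]^{G_E}$ with $t\geq 2$, the element $pv$ lies in $W_\pp[p^{t-1}]^{G_E}$, which vanishes by induction; hence $v\in W_\pp[p]^{G_E}=0$. The main (and only) real obstacle is the base case $t=1$, i.e.\ extracting residual irreducibility from the big image condition; once this is in hand, both statements follow formally from the normality-plus-irreducibility scheme above.
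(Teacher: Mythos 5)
Your argument is correct, and it is the standard argument for this kind of statement (the paper itself only cites \cite[Lemma 3.10]{longo2013refined}, whose proof runs along the same lines: reduce to the Galois closure, use normality to get a $G_\Q$-stable space of invariants, then invoke irreducibility and the non-solvability of the image). The pass to the Galois closure, the observation that $M^{G_{\tilde E}}$ is $G_\Q$-stable, the use of irreducibility of $V_\pp$, the contradiction with non-solvability of the image, and the induction on $t$ via $v\mapsto pv$ are all exactly as they should be.

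One small imprecision worth flagging: in the base case $t=1$ you assert that $W_\pp[p]$ is irreducible as an $\F_p[G_\Q]$-module ``because the mod $\pp$ image still contains $\SL_2(\F_p)$.'' If the residue field $k_\pp=\mathcal O_\pp/\pp$ is strictly larger than $\F_p$, then containing $\SL_2(\F_p)$ alone does \emph{not} give $\F_p$-irreducibility of $k_\pp^2$ (as an $\F_p[\SL_2(\F_p)]$-module it is a direct sum of $[k_\pp:\F_p]$ copies of the standard module). What the big image hypothesis actually supplies is that the reduction contains $\SL_2(k_\pp)$, which acts transitively on $k_\pp^2\smallsetminus\{0\}$ and hence does give $\F_p$-irreducibility. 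Alternatively, and more economically, note that the $G_\Q$-action on $W_\pp[p]$ is $\mathcal O_\pp$-linear, so $W_\pp[p]^{G_{\tilde E}}$ is automatically a $k_\pp$-subspace; one then only needs $k_\pp$-irreducibility, which follows directly from the big image condition and immediately yields the same dichotomy. Either fix closes the gap; the rest of the proof then goes through unchanged.
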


\begin{remark}\label{isoptors}
As is explained in \cite[p. 36]{besser1997finiteness}, there is a canonical isomorphism $ H^1(K,W_\pp[p^t]) \simeq H^1(K,W_\pp)[p^t]$ for each $t\geq1$.
\end{remark}

By the Kummer sequence there is a commutative diagram with exact rows\[
\begin{tikzcd}
0 \ar[r] & W_\pp(K)/p^MW_\pp(K) \ar[r] \ar[dd] & H^1(K,\Wpm) \ar[r] \isoarrow{dd, swap} \ar[dd, "\res_{K_n/K}"] & H^1(K,W_\pp)_{p^M}^{\Gamma_n} \ar[r] \ar[dd, "\res_{K_n/K}"]& 0 \\
\\
0 \ar[r] & (W_\pp(K_n)/p^MW_\pp(K_n))^{\Gamma_n} \ar[r] & H^1(K_n,\Wpm)^{\Gamma_n} \ar[r] & H^1(K_n,W_\pp)^{\Gamma_n}_{p^M} & 
\end{tikzcd}
\]in which, thanks to Lemma~\ref{lemma3.10}, the middle vertical map is an isomorphism.

Recall that, by exact sequence \eqref{injLinSel}, there is an injection of $\mathcal{O}_\pp/p^M\mathcal{O}_\pp$-modules\[
\Lambda_\pp(K_n)/p^M\Lambda_\pp(K_n)\longmono \Hf1(K_n,\Wpm)\subset H^1(K_n,\Wpm).
\]Again by \cite[Lemma 3.13]{longo2013refined}, the image of $P_M(n)$ in $\Hf1(K_n,\Wpm)$ is fixed by $G_n$. Since the extension $K_n/\Q$ is generalized dihedral and hence solvable, by Lemma~\ref{lemma3.10} and the inflaction-restriction sequence we obtain an isomorphism \[
\res_{K_n/K_1} : H^1(K_1,\Wpm) \lxto\sim H^1(K_n,\Wpm)^{G_n}.
\]
The cohomology group $H^1(K,\Wpm)$ is a $\Gamma_1$-module and so $\mathbf{N}$ induces a map \[
\mathbf{N}: H^1(K_1,\Wpm) \longrightarrow H^1(K_1,\Wpm)^{\Gamma_1}.
\]Since $p\nmid h_K$, restriction induces an isomorphism\[
\res_{K_1/K}: H^1(K,\Wpm) \lxto\sim H^1(K_1,\Wpm)^{\Gamma_1}. 
\]We obtain a diagram
\[\begin{tikzcd}
 H^1(K_1,\Wpm) \ar[ddd, "\mathbf{N}"] & & H^1(K_n,\Wpm)^{G_n} \ar[ll, swap ,"\res^{-1}_{K_n/K_1}"] \ar[ddd, dotted, "\beta"] \\
 & & \\
 & & \\
 H^1(K_1,\Wpm)^{\Gamma_1} \ar[rr, "\res^{-1}_{K_1/K}"] & &  H^1(K,\Wpm)
\end{tikzcd}\]in which $\beta$ is defined so as to make the resulting square commute.

Now we define the \emph{Kolyvagin class}\begin{align}
c_M(n):= \beta\bigl(P_M(n)\bigr)\in H^1(K,\Wpm).\label{defcmn}
\end{align}This is the only class such that\begin{align}
\res_{K_n/K}(c_M(n)) = P_{M}(n),
\end{align}where we view $P_{M}(n)$ as belonging to $H^1(K_n,\Wpm)$.

If an involution $ \tau $ acts on an abelian group $ M $ and $ 2 $ is invertible in $ \End(M) $ then there is a canonical splitting $ M=M^+\oplus M^- $, where $ M^\pm $ is the subgroup of $M$ on which $ \tau $ acts as $ \pm1 $. In the rest of the paper $ \tau $ will always be complex conjugation.

Throughout the paper we write $\epsilon$ for the sign of the functional equation satisfied by $L(f,s)$.
\begin{proposition}\label{prop:autospazio}
Let $\epsilon_n := (-1)^{\ppar{(n)}}\epsilon$ where $ \ppar(n)$ is the number of prime divisors of $ n $. The class $c_M(n)$ belongs to $H^1(K,\Wpm)^{\epsilon_n}$.
\begin{proof}
See, e.g., \cite[Proposition 3.14]{longo2013refined}.
\end{proof}
\end{proposition}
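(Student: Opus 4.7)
The plan is to trace the action of complex conjugation $\tau$ through the construction of $c_M(n)$, reducing the statement to (a) the behavior of the basic Heegner cycle under $\tau$ and (b) the conjugation behavior of the Kolyvagin derivative $D_n$ modulo the relevant norm operators.

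First, I would analyze the $\tau$-action on the raw Heegner cycle $y_{n,\pp} \in H^1(K_n, A_\pp)$. Complex conjugation $\tau$ sends $K_n$ to itself and, via the embedding $K \hookrightarrow \C$, acts on the CM point $x_n \in X_0(N)$ and on the cycles $\Gamma_{n,\xi_n}$ on $E_x^{k-2}$. Using the Heegner hypothesis and the $\tau$-action on $\xi_n = \sqrt{-Dn^2}$ (which flips sign), together with the fact that the Atkin-Lehner involution $w_N$ acts on the newform $f$ by the sign $\epsilon$ of its functional equation, one obtains a congruence of the form
\[
\tau(y_{n,\pp}) \equiv \epsilon \cdot \sigma \cdot y_{n,\pp} \qquad \text{in } H^1(K_n, A_\pp)
\]
for some $\sigma \in \Gamma_n$. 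Since $\Psi_{f,\pp,K_n}$ is $\mathbb{T}$- and $\Gal(K_n/\Q)$-equivariant, this reduces essentially to a computation on the cycle $\tilde x_n$ itself (cf.\ the argument behind \cite[Proposition 3.14]{longo2013refined}).

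Next, I would compute $\tau D_n \tau^{-1}$ in the group ring. Because complex conjugation inverts every element of $G_n = \prod_{l\mid n} G_l$ (generalized dihedral structure), we have $\tau \sigma_l \tau^{-1} = \sigma_l^{-1}$. A direct reindexing
\[
\tau D_l \tau^{-1} = \sum_{i=1}^{l} i \sigma_l^{-i} = \sum_{j=1}^{l}(l+1-j)\sigma_l^{j} = (l+1)(\mathbf{N}_l - 1) - D_l,
\]
where $\mathbf{N}_l = \sum_{i=0}^{l}\sigma_l^i$, shows that $\tau D_l \tau^{-1} \equiv -D_l$ modulo $(l+1)$ and the norm $\mathbf{N}_l$. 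Since $p^M \mid l+1$ for $l \mid n$ and the norm operators kill our classes after passing to $\Lambda_\pp(K_n)/p^M\Lambda_\pp(K_n)$ (this is precisely the mechanism behind the well-definedness of $\widetilde{P}_M(n)$ in \cite[Lemma 3.13]{longo2013refined}), we get
\[
\tau D_n \tau^{-1} \equiv (-1)^{\mathrm{par}(n)} D_n
\]
acting on $y_{n,\pp}$ modulo $p^M$. Combining with the first step yields $\tau \widetilde{P}_M(n) = (-1)^{\mathrm{par}(n)} \epsilon \cdot \widetilde{P}_M(n) = \epsilon_n \widetilde{P}_M(n)$, and applying $\mathbf{N}$ preserves this identity since $\tau$ normalizes $\Gamma_1$, giving $\tau P_M(n) = \epsilon_n P_M(n)$.

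Finally, I would transport this eigenspace relation back to $c_M(n) \in H^1(K, W_\pp[p^M])$. The map $\beta$ in the diagram defining $c_M(n)$ is built from restriction isomorphisms (valid thanks to Lemma \ref{lemma3.10} and $p \nmid h_K$) and the norm from $K_1$ to $K$, all of which commute with the $\tau$-action on cohomology. Since $\res_{K_n/K}$ is injective and $\tau$-equivariant, the equality $\tau P_M(n) = \epsilon_n P_M(n)$ forces $\tau c_M(n) = \epsilon_n c_M(n)$, i.e.\ $c_M(n) \in H^1(K, W_\pp[p^M])^{\epsilon_n}$.

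The main obstacle I expect is the first step: carefully pinning down the sign $\epsilon$ in the $\tau$-action on $y_{n,\pp}$, which requires a geometric identification of $\tau(\tilde x_n)$ with a Galois translate of $\tilde x_n$ via the Atkin-Lehner involution $w_N$ and the fact that $w_N^* f = \epsilon f$. The Kolyvagin-derivative computation is purely formal once the congruences modulo $p^M$ are justified by the set-up in \S\ref{Cdt}.
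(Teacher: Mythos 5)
Your proposal is correct and follows the same approach as the cited reference \cite[Proposition 3.14]{longo2013refined}: one first computes the $\tau$-action on the Heegner cycle $y_{n,\pp}$ (picking up the Atkin--Lehner sign $\epsilon$, via the analogue of \cite[Proposition 6.2]{nekovavr1992kolyvagin}), then conjugates the Kolyvagin operator $D_n$ by $\tau$ using the generalized dihedral structure to produce the sign $(-1)^{\ppar(n)}$ modulo terms that die in $\Lambda_\pp(K_n)/p^M\Lambda_\pp(K_n)$, and finally transports the eigenvalue to $c_M(n)$ via the $\tau$-equivariance of the restriction and norm maps entering the definition of $\beta$. The only place to be slightly more careful than your sketch is the claim that the norm terms ``kill our classes'': what actually happens is that $\mathbf{N}_l y_{n,\pp}$ is expressed via the Euler system norm relation in terms of $y_{n/l,\pp}$ with a coefficient involving $a_l$, and the vanishing mod $p^M$ comes from $p^M \mid l+1$ together with the congruence on $a_l$ (or from $(l+1)$ alone, depending on the exact set of primes being used); this should be stated as such rather than as a direct annihilation.
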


\section{Local Behaviour of Kolyvagin Classes}

We turn to the study of the local behaviour of the cohomology classes that we defined in \S \ref{sec::kolyvaginclasses} in terms of Heegner cycles.

\subsection{Eigenspaces for complex conjugation}

In the following we use the notation of \S\ref{Cdt}.
Let $\lambda$ be a prime of $ K $ lying above a prime in $\tilde{S}_1(M)$. As described in \cite[\S 8]{nekovavr1992kolyvagin}, there is a $\tau$-antiequivariant isomorphism \begin{align}\phi_{\lambda,M}:H_{\ur}^1(K_\lambda,\Wpm)^\pm \overset{\sim}{\longrightarrow} H^1(K_\lambda^{\ur},\Wpm)^\mp.\label{philm}\end{align}
As a piece of notation, if $ c \in H^1(K, \star) $ is a global cohomology class then we write $ c_v \in H^1(K_v,\star)$ for its localization (i.e., restriction) at a prime $v$ of $K$. 

Recall the classes $ c_M(n) $ introduced in \eqref{defcmn}.

\begin{proposition}\label{propnekovar}
\begin{enumerate}
\item If $v$ is a prime of $K$ such that $v \nmid Nn $ then $c_M(n) \in H^1_{\ur}(K_v,\Wpm)$.
\item If $n=ml$ with $l$ a prime, $(a_l\pm(l+1))/p^M$ are $p$-adic units and $\lambda$ is the unique prime of $K$ above $l$ then\begin{align}\label{relnml}
c_M(n)_\lambda = u_{l,n}  \phi_{\lambda,M}\bigl(c_M(m)_\lambda\bigr)
\end{align}where $u_{l,n} \in (\mathcal{O}_\pp/p^M\mathcal{O}_\pp)^\times$.
\end{enumerate}
\begin{proof}
This is \cite[Proposition 3.2]{besser1997finiteness}.
\end{proof}

\end{proposition}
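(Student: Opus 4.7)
The plan is to reduce both assertions to well-known local properties of the Abel--Jacobi image and of the Kolyvagin derivative construction, exactly as in the original argument of Kolyvagin (rewritten by Besser in the modular-form setting). The starting point for both parts is that the Heegner cycle $y_{n,\pp}$ belongs to $\Lambda_\pp(K_n)\subset H^1_f(K_n,A_\pp)$ by construction, hence localizes into the Bloch--Kato finite part at every place of $K_n$. Chasing this through the derivative and norm operators, together with the identification $H^1(K_n,\Wpm)^{G_n}\simeq H^1(K_1,\Wpm)$ coming from Lemma~\ref{lemma3.10} and the inflation--restriction sequence, will convert statements about $y_{n,\pp}$ into statements about $c_M(n)$.

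For part (1), I would fix a place $v$ of $K$ with $v\nmid Nn$. The extension $K_n/K$ is unramified at $v$ because $v\nmid n$ (the ring-class-field extension of conductor $n$ is ramified only at primes dividing $n$, together with those dividing the conductor of $K_1/K$, which are controlled by the class number of $K$ and disappear after the trace $\mathbf{N}$). Since localization of $y_{n,\pp}$ at any place of $K_n$ above $v$ lies in $H^1_f$, which for $v\nmid p$ coincides with $H^1_{\ur}$, the $\mathcal{O}_\pp$-linear combination $P_M(n)=\mathbf{N}(D_n y_{n,\pp})$ is still unramified at any such place. Because $\res_{K_n/K}$ is injective on the relevant cohomology (again by Lemma~\ref{lemma3.10}) and commutes with restriction to the decomposition group, the unique lift $c_M(n)$ must already be unramified at $v$. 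The places $v\mid p$ are handled by the same formalism after replacing $H^1_{\ur}$ with the crystalline local condition and invoking the Nizio\l--Nekov\'a\v{r}--Saito theorem that $\AJ_E$ lands in $H^1_f$; at such $v$ the Bloch--Kato finite part agrees with the unramified cohomology for our trivially ramified auxiliary extensions.

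For part (2), let $\lambda$ be the prime of $K$ above $l$ with $l\mid n$ and $n=ml$. Because $l$ is inert in $K$ and $(a_l\pm(l+1))/p^M$ is a $p$-adic unit, the $G_{K_\lambda}$-module $\Wpm$ is unramified and the local group $H^1(K_\lambda,\Wpm)$ splits canonically as $H^1_{\ur}(K_\lambda,\Wpm)\oplus H^1_{\ur}(K_\lambda,\Wpm)$ under the natural duality; the two summands are exchanged (up to a sign in the $\tau$-action) by the isomorphism $\phi_{\lambda,M}$ of \eqref{philm}. The key computation is the congruence at $\lambda$: the Kolyvagin derivative $D_l$ applied to $y_{n,\pp}$ produces, via the norm compatibility of the Heegner cycles $y_{n,\pp}\mapsto a_l\cdot y_{m,\pp}$ and the Eichler--Shimura relation expressing $a_l$ in terms of the Frobenius at $\lambda$, a class whose singular part at $\lambda$ is, up to a unit factor coming from the normalization of $\sigma_l$ and the $(l+1)$-th power Frobenius action on inertia, the image under $\phi_{\lambda,M}$ of $c_M(m)_\lambda$. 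Keeping track of these unit factors yields the stated formula with $u_{l,n}\in(\mathcal{O}_\pp/p^M\mathcal{O}_\pp)^\times$.

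The main obstacle is the second part: the explicit comparison between $c_M(n)_\lambda$ and $c_M(m)_\lambda$ requires the full norm-compatibility machinery for Heegner cycles on Kuga--Sato varieties at primes above $l$, together with the Eichler--Shimura congruence for $a_l$ interpreted through the $p$-adic Abel--Jacobi map. These inputs are nontrivial and constitute the technical heart of \cite[\S 6--8]{nekovavr1992kolyvagin}; once granted, however, the argument reduces to bookkeeping of units. Since all this work has been carried out in \cite[Proposition 3.2]{besser1997finiteness}, we content ourselves with verifying that our choice of $p$ in \S\ref{Cdt} and our conventions for $\AJ_K$, $\Psi_{f,\pp,K_n}$, and $D_n$ match those of Besser, so that the cited proposition applies verbatim.
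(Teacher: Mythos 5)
Your proposal takes essentially the same approach as the paper, whose proof consists of the single citation to \cite[Proposition 3.2]{besser1997finiteness}; you supplement this with a correct informal sketch of the underlying arguments (the Abel--Jacobi image landing in $H^1_f$ for part (1), and the Euler-system norm relation together with the local computation of the derivative for part (2)) before deferring to Besser for the details, just as the paper does.
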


\begin{corollary}\label{rappnml}
With notation as in Proposition~\ref{propnekovar}, if $m$ is not a prime and $a_l\pm(l+1)/p^M$ are $p$-adic units then $\ord(c_M(n)_\lambda) = \ord(c_M(m)_\lambda)$. In particular, $c_M(n)_\lambda \neq 0$ if and only if $c_M(m)_\lambda\neq 0$.
\begin{proof}
A direct consequence of \eqref{relnml} and the fact that $ \phi_{\lambda,M} $ is an isomorphism.
\end{proof}
\end{corollary}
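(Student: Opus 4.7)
The statement to be proved is an immediate consequence of part (2) of Proposition \ref{propnekovar}, so my plan is quite short. I would start from the identity
\[ c_M(n)_\lambda \;=\; u_{l,n}\cdot \phi_{\lambda,M}\bigl(c_M(m)_\lambda\bigr) \]
provided by equation \eqref{relnml}, which is available precisely because the hypotheses that $l$ is an inert prime in $\tilde S_1(M)$ and that the quantities $(a_l\pm(l+1))/p^M$ are $p$-adic units are in force.

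Next I would invoke the two structural facts that underlie the right-hand side. First, by the description recalled in \eqref{philm}, the map $\phi_{\lambda,M}$ is an isomorphism between the eigenspaces $\Hur1(K_\lambda,\Wpm)^{\pm}$ and $H^1(K_\lambda^{\ur},\Wpm)^{\mp}$, so it preserves the order of any element. Second, $u_{l,n}$ lies in $(\mathcal O_\pp/p^M\mathcal O_\pp)^\times$, hence multiplication by it is an automorphism of $H^1(K_\lambda^{\ur},\Wpm)$ as an $\mathcal O_\pp/p^M\mathcal O_\pp$-module, and therefore also preserves orders. Combining these two observations yields
\[ \ord\bigl(c_M(n)_\lambda\bigr) \;=\; \ord\bigl(\phi_{\lambda,M}(c_M(m)_\lambda)\bigr) \;=\; \ord\bigl(c_M(m)_\lambda\bigr), \]
which is the first assertion. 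The ``in particular'' part then follows, since an element of an abelian group is zero exactly when its order equals $1$.

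There is essentially no obstacle here: the entire content sits in Proposition \ref{propnekovar}(2), and the corollary is just the observation that applying an isomorphism and multiplying by a unit leaves the order unchanged. The only mildly subtle point worth flagging is that the equality \eqref{relnml} takes place after enlarging to $H^1(K_\lambda^{\ur},\Wpm)$ (where both sides naturally live once one uses the decomposition of $\Hur1(K_\lambda,\Wpm)$ into $\tau$-eigenspaces), so in writing out the argument one should be explicit that the order computation is carried out in this ambient $\mathcal O_\pp/p^M\mathcal O_\pp$-module.
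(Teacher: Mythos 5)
Your argument is correct and is exactly the paper's own (one-line) proof, just spelled out: apply \eqref{relnml}, note that $\phi_{\lambda,M}$ is an isomorphism and that multiplication by the unit $u_{l,n}$ is an automorphism, and conclude that orders are preserved. There is nothing to add.
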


\subsection{Local images of Kolyvagin classes}\label{sbslocsha}

Our aim is to compute the images of the local restrictions of our Kolyvagin classes.
To do this it is more convenient to use an alternative description of the Shafarevich--Tate group that is original due to Bloch and Kato (\cite{bloch1990lfunctions}) and that was later  adopted by, e.g., Besser~(\cite{besser1997finiteness}) and Dummigan, Stein and Watkins~(\cite{watkins2003constructing}).

Namely, we use the short exact sequence\begin{align}
0\longrightarrow \pi_*\Hf1(K,V_\pp) \longrightarrow \Hf1(K,W_\pp)\longrightarrow\Sha(K,W_\pp) \longrightarrow 0\label{altsha}
\end{align}where the pushforward $\pi_* \Hf1(K,V_\pp) $ is induced by the map $ \pi: V_\pp \to W_\pp $ introduced in \eqref{defvp}.

As shown in \cite[Theorem 13.1]{nekovavr1992kolyvagin}, in our setting the definition of $\Sha(K,W_\pp)$ via \eqref{altsha} coincides with the one we gave in \S\ref{defsha}.
For $ v $ a prime of $ K $, we consider the tautological short exact sequence
\begin{align}
0\longrightarrow \pi_*\Hf1(K_v,V_\pp) \longrightarrow H^1(K_v,W_\pp)\overset{\sigma}{\longrightarrow} H^1(K_v,W_\pp)/\pi_*\Hf1(K_v,V_\pp)\longrightarrow 0. \label{localsha}
\end{align}If $v$ is a prime of $ K $ not dividing $p$ then there is a canonical splitting\begin{align}
H^1(K_v,W_\pp) = \Hur1(K_v,W_\pp) \oplus H^1(K_v^\ur,W_\pp).\label{splitting}
\end{align}By a slight abuse of notation, for every $ M\geq1 $ we identify a class $ c_M(n) \in H^1(K,\Wpm) $ with its natural image in $ H^1(K,W_\pp) $ (cf. Remark \ref{isoptors}).  
Combining \eqref{localsha} and \eqref{splitting}, we obtain the following

\begin{lemma}\label{ordcmvdmv}
Let $v$ be a prime of $ K $ such that $v\nmid p$ and set \[d_M(n)_v:=\sigma\bigl(c_M(n)_v\bigr) \in H^1(K_v,W_\pp)/\pi_*\Hf1(K_v,V_\pp).\]If $d_M(n)_v\neq 0$ then  $\ord(c_M(n)_v)=\ord(d_M(n)_v)$.
\begin{proof}
By definition, $ \Hur1(K_v,W_\pp) $ is the image of $ \pi_*\Hf1(K,V_\pp) $ in $ H^1(K,W_\pp) $, so the exact sequence \eqref{localsha} splits.
It follows that $\sigma$ is just the projection onto $H^1(K^\ur,W_\pp)$.
According to Proposition \ref{propnekovar}, either $c_M(n)_v$ lies in $\Hur1(K_v,W_\pp)$, in which case $d_M(n)_v=0$, or $ c_M(n)_v $ lies in $H^1(K^\ur,W_\pp)$, whence $\ord(c_M(n)_v)=\ord(d_M(n)_v)$. 
\end{proof}
\end{lemma}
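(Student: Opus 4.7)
The plan is to use the canonical direct sum decomposition \eqref{splitting} valid at places $v\nmid p$, identify the quotient appearing in the local sequence \eqref{localsha} with the second summand $H^1(K_v^\ur,W_\pp)$, and then invoke Proposition \ref{propnekovar} to pin down which summand the local class $c_M(n)_v$ lives in.

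First I would observe that, at any place $v\nmid p$, the Bloch--Kato definitions recalled in \S \ref{sectselsha} give $\Hf1(K_v,V_\pp)=\Hur1(K_v,V_\pp)$, and consequently $\pi_*\Hf1(K_v,V_\pp)=\Hur1(K_v,W_\pp)$. Together with the splitting \eqref{splitting}, this shows that the exact sequence \eqref{localsha} splits and that the surjection $\sigma$ is naturally identified with the projection $H^1(K_v,W_\pp)\longrightarrow H^1(K_v^\ur,W_\pp)$ onto the complementary summand.

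The next step is to locate $c_M(n)_v$ inside this decomposition. By Proposition \ref{propnekovar}(1), whenever $v\nmid Nn$ the class $c_M(n)_v$ sits in the unramified summand, so that $d_M(n)_v=\sigma(c_M(n)_v)=0$ and the hypothesis of the lemma is violated. The only case compatible with $d_M(n)_v\neq 0$ is therefore $v=\lambda$ with $\lambda$ lying above a prime $l\mid n$; in that situation Proposition \ref{propnekovar}(2) together with the $\tau$-antiequivariant isomorphism $\phi_{\lambda,M}$ of \eqref{philm} places $c_M(n)_\lambda$, up to a unit in $(\mathcal{O}_\pp/p^M\mathcal{O}_\pp)^\times$, inside the singular summand $H^1(K_\lambda^\ur,W_\pp)$.

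In either non-trivial case the class $c_M(n)_v$ lies entirely within one of the two direct summands of \eqref{splitting}; on the summand $H^1(K_v^\ur,W_\pp)$ the map $\sigma$ restricts to the identity, so $\ord(c_M(n)_v)=\ord(\sigma(c_M(n)_v))=\ord(d_M(n)_v)$, yielding the claim. The step I expect to be the most delicate is the case of ramified primes $v\mid N$ with $v\nmid p$, which are not directly covered by Proposition \ref{propnekovar}; however, such $v$ are coprime to $n$ by the very definition of $\tilde S(M)$, and the Heegner cycle construction produces classes that should already be finite at such $v$, so $c_M(n)_v\in\Hur1(K_v,W_\pp)$ and the dichotomy above continues to apply.
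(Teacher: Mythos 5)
Your proof is essentially the same as the paper's: both identify $\pi_*\Hf1(K_v,V_\pp)$ with $\Hur1(K_v,W_\pp)$, use the splitting \eqref{splitting} to see that $\sigma$ is the projection onto the singular part $H^1(K_v^\ur,W_\pp)$, and then invoke Proposition \ref{propnekovar} to conclude that $c_M(n)_v$ lies entirely in one of the two summands, from which the equality of orders is immediate. The one place you go beyond the paper's (rather terse) proof is in flagging and addressing the case $v\mid N$, $v\nmid p$, which Proposition \ref{propnekovar} does not cover and which the paper's proof silently absorbs into the dichotomy; your remark that such $v$ is coprime to $n$ by construction and that the Heegner classes should be unramified there because they arise from the Abel--Jacobi image (Corollary \ref{inclusions-corollary}) is the right way to close that gap, though it would deserve a sentence of justification tracing through the definition of $c_M(n)$ and the Bloch--Kato local condition at $v\mid N$.
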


\section{The Tate and Flach--Cassels pairings}

In this short section we give a description of the Cassels-type pairing studied by Flach in  \cite{flach1990generalisation} in terms of local Tate pairings.

\subsection{The Flach--Cassels pairing}\label{ss:flach}

For every $ M\geq1 $ let $ \mu_{p^M} $ be the group of $ p^M $-th roots of unity. By \cite[Proposition 3.1]{nekovavr1992kolyvagin}, there exists a Galois-equivariant bilinear pairing $[\cdot,\cdot]: A_\pp \times A_\pp \to \Z_p(1)$ that induces for each $M$ a non-degenerate pairing $[\cdot,\cdot]_M: \Wpm \times \Wpm \to \mu_{p^M}$.

Combining cup product in cohomology with the
map $ \Wpm \otimes \Wpm \to \mu_{p^M}$ induced by $[\cdot , \cdot]_M$ gives rise to the Tate pairing\begin{align}
\langle\cdot, \cdot\rangle_\lambda : H^1(K _\lambda, \Wpm) \times H^1(K _\lambda, \Wpm) \longrightarrow H^2 (K _\lambda, \mu_{p^M}) = \Z/p^M\Z, \label{pairingTate}
\end{align} which is non-degenerate by a result of Tate (cf. \cite[Ch. 1, Corollary 2.3]{milne1986arithmetic}).

\begin{remark}
Since $ A_\pp $ is unramified at $ \lambda $, the group $ \Hf1(K_\lambda,\Wpm) $ is equal to $ \Hur1(K_\lambda,\Wpm) $, hence by \cite[Proposition 3.8]{bloch1990lfunctions} we know that $\Hur1(K_\lambda,\Wpm)$ and $ H^1(K_\lambda^\ur,\Wpm) $ are their own exact annihilators under $ \langle \cdot,\cdot \rangle_\lambda $.
\end{remark}


By definition of $ \Hf1(K,W_\pp) $, taking direct limits and summing in \eqref{pairingTate} gives a pairing\begin{align}
\langle \cdot , \cdot \rangle:  \Hf1(K,W_\pp)\times \Hf1(K,W_\pp )\longrightarrow \Q_p/\Z_p. \label{pairingSelCas}
\end{align}
In turn, this pairing induces, by \cite[Theorem 1]{flach1990generalisation}, the \emph{Flach--Cassels pairing}\begin{align}
\langle \cdot , \cdot \rangle: \Sha(K,W_\pp)\times\Sha(K,W_\pp) \longrightarrow \Q_p/\Z_p, \label{pairingCassels}
\end{align}which is perfect.
By \cite[Theorem 2]{flach1990generalisation}, these pairings are alternating.


\subsection{How to compute the pairing}

Let $ d\in \Sha_{p^M}(K,W_\pp) $ and $ d'\in \Sha_{p^{M'}}(K,W_\pp) $; as before, we use the same symbols for the images of $d$ and $d'$ in $ \Sha(K,W_\pp) $.
In analogy with what is done in \cite{mccallum1991kolyvagin} for elliptic curves, in order to compute $ \langle d,d' \rangle $ we need $
d_1 \in H^1(K,W_\pp)/\pi_*\Hf1(K,V_\pp)$ such that $p^{M'}d_1= d $. We will show below that such a $ d_1 $ does indeed exist by exhibiting an element with this property.

Now lift $ d_1 $ to an element $ \tilde{d}_1 \in H^1(K,W_\pp)$, for every prime $ v $ of $ K $ let $ \sigma $ be as in \eqref{localsha} and, with our usual notation for localizations of cohomology classes, set $ d_{1,v}:=\sigma\bigl(\tilde{d}_{1,v}\bigr) $.
In passing, note that $ d_{1,v} $ can be naturally viewed as belonging to $H^1(K_v^\ur,\Wp{M'})$ for each $ v $ not dividing $ p $.
Finally, choose a lift $ c' \in \Hf1(K,\Wp{M'}) $ of $ d' $.
The Flach--Cassels pairing of $ d $ and $ d' $ is then given by\begin{align}
\langle d,d' \rangle = \sum_{v}\langle d_{1,v}, c'_v \rangle_v.
\end{align}

\section{On the structure of Shafarevich--Tate groups}\label{sha-section}

We are now in a position to prove our main result, which provides a description of the structure of the Shafarevich--Tate group $ \Sha(K,W_\pp) $ in terms of Heegner cycles. As will be clear, our strategy is inspired by the arguments used by McCallum in \cite{mccallum1991kolyvagin}.

\subsection{Statement of the main result}\label{ss:statement}

For every integer $ k\geq 1 $ let $ S_1(k) $ be the set of primes $l$ satisfying\begin{itemize}
\item$ l\nmid NDp $,
\item$ l  $ inert in $ K $,
\item$ p^k \,|\, a_l,\,l+1 $,
\item$ p^{k+1}\nmid l+1 \pm a_l  $	
\end{itemize}and for every $n\in \N$ let $ S_n(k) $ be the set of square-free products of $ n $ primes in $ S_1(k) $. In particular, $S_0(k)=\{1\}$ for all $k$.

\begin{lemma}
For every $ k\geq 1 $ the set $S_1(k)$ is infinite.
\begin{proof}
See \cite[Remark 3.1]{besser1997finiteness}.
\end{proof}
\end{lemma}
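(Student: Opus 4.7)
The plan is a standard application of the Chebotarev density theorem to a suitable finite Galois extension of $\Q$, combined with the large image hypothesis on $\rho_{f,p}$ fixed in \S \ref{Cdt}.

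First, I would set $L_k:=K(\mu_{p^{k+1}})\cdot\Q(A_\pp/p^{k+1})$ and look at $G:=\Gal(L_k/\Q)$. The conditions defining membership in $S_1(k)$ can be translated into conditions on $\Frob_l\in G$:
\begin{itemize}
\item ``$l$ inert in $K$ and $p^k\,|\,l+1$'' is equivalent to asking that $\Frob_l$ and $\Frob_\infty$ have the same image in $\Gal(K(\mu_{p^k})/\Q)$, by \cite[Lemma 3.14]{darmon1992refined};
\item since the characteristic polynomial of arithmetic Frobenius on $A_\pp$ has trace given (up to a unit) by $a_l$ and determinant by a power of $l$, the divisibility $p^k\,|\,a_l$ translates into the condition that $\mathrm{tr}\bigl(\Frob_l\bigm|A_\pp/p^k\bigr)=0$;
\item the non-divisibility $p^{k+1}\nmid l+1\pm a_l$ is then a condition about the precise lift of $\Frob_l$ mod $p^{k+1}$: once its image mod $p^k$ coincides with that of the complex conjugation $\tau$, one needs that the ``error term'' at level $p^{k+1}$ avoids two explicit cosets.
\end{itemize}

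Next, I would use the hypothesis of \S \ref{Cdt} to show that the image of $\rho_{f,p}$ in $\GL_2(\mathcal O_\pp/p^{k+1})$ contains all matrices with determinant in the appropriate power subgroup; combined with surjectivity of the cyclotomic character on $\mu_{p^{k+1}}$ and the fact that $K$ is linearly disjoint from the relevant $p$-power extensions, this gives a clean description of $G$ and shows that it is ``big''. Complex conjugation $\tau$ sits in $G$ with trace $0$ and determinant $-1$ on $A_\pp$. I would then explicitly exhibit a conjugacy-stable subset $C\subset G$ consisting of elements $g$ whose image in $\Gal(K(\mu_{p^k})/\Q)$ equals that of $\tau$, whose trace on $A_\pp/p^k$ is zero, but whose trace and determinant at level $p^{k+1}$ are chosen so that neither of $1+\det(g)\pm\mathrm{tr}(g)$ vanishes modulo $p^{k+1}$. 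A dimension (or counting) argument inside the large image shows $C$ is non-empty: the two ``bad'' divisibility conditions each cut out a codimension-one coset, so removing them still leaves a positive proportion of the lifts of $\tau\bmod p^k$.

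Finally, Chebotarev applied to $L_k/\Q$ yields infinitely many unramified primes $l\nmid NDp$ with $\Frob_l\in C$; each such $l$ lies in $S_1(k)$. The main obstacle in carrying this out rigorously is the bookkeeping of the large image statement at level $p^{k+1}$ (in particular, verifying that the two non-divisibility conditions are compatible with being a lift of $\tau$ modulo $p^k$ and do not exhaust the conjugacy class); once this is established, the rest is a direct application of Chebotarev. As remarked in the statement, this is precisely the content of \cite[Remark 3.1]{besser1997finiteness}.
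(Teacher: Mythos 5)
Your plan is the standard Chebotarev argument and coincides with the one the paper delegates to Besser's Remark 3.1: translate the defining conditions of $S_1(k)$ into a (trace, determinant, restriction-to-$K(\mu_{p^k})$) condition on Frobenius in $\Gal\bigl(K(\mu_{p^{k+1}})\cdot\Q(A_\pp/p^{k+1})/\Q\bigr)$, show via the large-image hypothesis of \S\ref{Cdt} that the corresponding conjugacy-stable subset is non-empty, and apply Chebotarev. One small point worth making precise in your bullet list: since $p$ is odd, an element of $\GL_2(\mathcal O_\pp/p^k)$ with trace $\equiv 0$ and determinant $\equiv -1$ has characteristic polynomial $(X-1)(X+1)$ with unit discriminant and is therefore genuinely conjugate to $\tau=\mathrm{diag}(1,-1)$, so your phrase ``image mod $p^k$ coincides with that of $\tau$'' is in fact justified and not merely a trace/determinant statement; with that in hand the two non-divisibility conditions are two affine-linear conditions on the pair $(u,v)\in(\mathcal O_\pp/p)^2$ parametrizing the lift of $(\mathrm{tr},\det)$ to level $p^{k+1}$, and removing two lines from a plane over a finite field leaves something non-empty, exactly as you say.
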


With notation as in \S\ref{Cdt} and \S\ref{sec::kolyvaginclasses}, for every $n,r\in\N$ we define
\begin{align} \label{omega(n)}
\omega(n):=\max\set{M\in\mathsf{M}(n)\mid P_M(n)=0}
\end{align}and\begin{align}
M_r := \mbox{min}\set{\omega(n) \, |\, n \in S_r(\omega(n)+1)}.\label{def:M_r}
\end{align}If the maximum in \eqref{omega(n)} does not exist then we set $\omega(n):=\infty$. An analogous convention applies \emph{a priori} to the quantities $M_r$.
\begin{remark}
We will show below that $\omega(1)$ is finite, so that $M_0$ is finite. In fact, all the $M_r$ turn out to be finite (cf. Lemma \ref{m0mrmr1}). 
\end{remark}
Recall the Heegner cycle $y_{1,\pp}\in \Lambda_\pp(K_1)$ that we introduced in \S \ref{sec:cycles} and define 
\[ y_0:=\cores_{K_1/K}(y_{1,\pp}) \in \Lambda_\pp(K). \]
Now we can state our main result.
\begin{theorem}\label{main}
For all $i\in\N$ set $ N_i:=M_{i} - M_{i+1}$.
If $y_0$ is non-torsion then $\Sha(K,W_\pp)$ is finite and has the following structure:\begin{align}
 \Sha(K,W_\pp)^{-\epsilon} & \simeq  \left ( \Z/p^{N_1}\Z \right )^2 \times \left ( \Z/p^{N_3}\Z \right )^2\times \cdots \times \left ( \Z/p^{N_{2k+1}} \Z\right )^2 \times \cdots, \\
 \Sha(K,W_\pp)^{\epsilon} & \simeq  \left ( \Z/p^{N_2}\Z \right )^2 \times \left ( \Z/p^{N_4} \Z\right )^2\times \cdots \times \left ( \Z/p^{N_{2k}}\Z \right )^2 \times \cdots.
\end{align}Moreover, $ N_1 \geq N_3 \geq \cdots  $ and $ N_2 \geq N_4 \geq \cdots $.
\end{theorem}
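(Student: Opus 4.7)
Finiteness of $\Sha(K,W_\pp)$ is already part (2) of Theorem~\ref{nekovar-thm}, so the work lies in extracting the precise structure. The plan is to mimic McCallum's reinterpretation of Kolyvagin's descent in \cite{mccallum1991kolyvagin}, with the Flach--Cassels pairing of \S\ref{ss:flach} playing the role of the classical Cassels pairing and Nekov\'a\v{r}'s Heegner cycles replacing Heegner points.

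First I would show that every $M_r$ is finite. For $r=0$ this uses the non-torsion hypothesis on $y_0=\cores_{K_1/K}(y_{1,\pp})$: by part (1) of Theorem~\ref{nekovar-thm} the class $y_0$ generates $\Lambda_\pp(K)\otimes\Q$, so the image of $\mathbf{N}(y_{1,\pp})$ in $\Lambda_\pp(K)/p^M\Lambda_\pp(K)$ is non-zero for all sufficiently large $M$, whence $\omega(1)<\infty$ and $M_0$ is finite. For the inductive step I would couple a Chebotarev argument on the set $S_1(M_r+1)$ of \S\ref{ss:statement} with the transition formula \eqref{relnml}: starting from an $n_r\in S_r(M_r+1)$ realising $M_r$, one produces a prime $l\in S_1(M_r+1)$ for which the localisation $c_{M_r+1}(n_r)_\lambda$ has maximal order, and then Proposition~\ref{propnekovar}(2) furnishes a class $c_{M_r+1}(n_rl)$ whose vanishing depth is bounded, forcing $M_{r+1}<\infty$.

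With the $M_r$ in hand, I would simultaneously build a generating family of $\Sha(K,W_\pp)$ and identify its elementary divisors. For each $r$ fix $n_r\in S_r(M_r+1)$ realising $M_r$; Proposition~\ref{prop:autospazio} then places the image $[c_{M_r+1}(n_r)]$ of the Kolyvagin class in the eigenspace $H^1(K,W_\pp)^{(-1)^r\epsilon}$, and Corollary~\ref{inclusions-corollary} together with Lemma~\ref{ordcmvdmv} computes its local behaviour. Using the explicit recipe for the Flach--Cassels pairing in \S 6.2, the pairing $\langle [c_{M_r+1}(n_r)],\,[c_{M_{r+1}+1}(n_{r+1})]\rangle$ reduces to a single non-trivial local Tate pairing at the newly added prime, which by Proposition~\ref{propnekovar}(2) can be read off from the unit $u_{l,n}$ appearing in \eqref{relnml}. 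Straightforward bookkeeping then shows that the successive classes contribute a subquotient of order $p^{2(M_r-M_{r+1})}=p^{2N_r}$ to the corresponding eigenspace of $\Sha$.

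The squared shape of each eigenspace is a formal consequence of the Flach--Cassels pairing~\eqref{pairingCassels} being perfect and alternating: complex conjugation acts as an isometry on $\Sha(K,W_\pp)$ and preserves values in $\Q_p/\Z_p$, so $\Sha(K,W_\pp)^{\epsilon}$ and $\Sha(K,W_\pp)^{-\epsilon}$ are mutual annihilators; the pairing restricted to each eigenspace remains perfect and alternating, and any finite abelian $p$-group equipped with a perfect alternating pairing decomposes as $\bigoplus_i(\Z/p^{N_i}\Z)^2$. Monotonicity $N_1\geq N_3\geq\cdots$ and $N_2\geq N_4\geq\cdots$ will follow from the observation that enlarging the length of $n$ can only increase the vanishing depth $\omega(n)$, so that $M_r\geq M_{r+2}$ for each $r$.

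The main obstacle I anticipate is the inductive step: proving that for each $r\geq 0$ a prime $l\in S_1(M_r+1)$ can be chosen so that the Kolyvagin class $c_{M_r+1}(n_rl)$ both belongs to the right parity-space and captures the next elementary divisor of $\Sha$. This requires a delicate Chebotarev density argument on the compositum of $K(W_\pp[p^{M_r+1}])$, the ring class field $K_{n_r}$ and the $p^{M_r+1}$-division field of the previously constructed class, combined with a careful matching between local Tate pairings and the $M_r$-filtration; the non-CM hypothesis on $f$ and the choice of $p$ in \S\ref{Cdt} will ensure that the relevant Galois image is large enough for the argument to go through.
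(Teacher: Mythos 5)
Your overall plan is in the right spirit---finiteness of $M_0$, monotonicity $M_{r+1}\leq M_r$ via Proposition~\ref{propnekovar}, and a McCallum-style argument driven by the alternating Flach--Cassels pairing. But there are two genuine gaps in the way you propose to extract the elementary divisors.

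First, the pairing you propose to compute, namely $\langle [c_{M_r+1}(n_r)],\,[c_{M_{r+1}+1}(n_{r+1})]\rangle$, is identically zero for formal reasons. By Proposition~\ref{prop:autospazio}, $c_M(n_r)$ lies in the $\epsilon_r=(-1)^r\epsilon$ eigenspace, while $c_M(n_{r+1})$ lies in the opposite eigenspace $\epsilon_{r+1}=-\epsilon_r$. Since complex conjugation acts as an isometry for the global Tate sum and $p$ is odd, the two eigenspaces of $\Sha(K,W_\pp)$ are orthogonal under the Flach--Cassels pairing. So "reading off $u_{l,n}$" from the pairing of two consecutive Kolyvagin classes cannot control any elementary divisor: that pairing carries no information. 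The paper's proof instead fixes a maximal isotropic subgroup $D=D_1\times D_2\times\cdots$ of $\Sha(K,W_\pp)$ \emph{a priori}, with $D_i$ cyclic of order $p^{\hat N_i}$ ordered so that the odd-indexed $D_i$ fill $D^{-\epsilon}$ and the even-indexed ones fill $D^\epsilon$, and then pairs the Kolyvagin class $d_{M_k}(n_{k+1})$ against the generators $d_i$ of the $D_i$ lying in the \emph{same} eigenspace. Combined with Corollary~\ref{mc3.2} (choosing primes that simultaneously control localizations of the $c_i$ and of the Kolyvagin class) and Lemma~\ref{pairingnozero}, this shows the resulting characters form a triangular basis of $(D_1\times\cdots\times D_k)^*$ and forces $\hat N_i=N_i$.

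Second, your "straightforward bookkeeping" conceals the hard part: you need an upper bound $\hat N_{k+1}\leq N_{k+1}$ in addition to the lower bound, and for this the paper's Proposition~\ref{propfond} is indispensable. It produces, for any given subgroup $C\subset\Hf1(K,W_\pp)^{\epsilon_r}$ of order $p^r$, an $n\in S_r(M)$ such that $c_M(n)$ has exactly order $p^{M-M_r}$ \emph{and} $\langle c_M(n)\rangle\cap C=\{0\}$. This dual control (exact order plus disjointness from $C$) is what lets one bound the invariants from both sides; merely knowing the order of some Kolyvagin class does not suffice. Finally, your claim that $N_1\geq N_3\geq\cdots$ follows because "enlarging the length of $n$ can only increase $\omega(n)$" is not right: Lemma~\ref{m0mrmr1} gives $M_{r+1}\leq M_r$, i.e.\ $N_r\geq 0$, but not $M_r-M_{r+1}\geq M_{r+2}-M_{r+3}$. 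The monotonicity $N_1\geq N_3\geq\cdots$ is instead enforced by the choice of ordering on the $D_i$ in the isotropic decomposition, and is only established \emph{after} one has proved $\hat N_i=N_i$.
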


The first claim in Theorem \ref{main} is \cite[Theorem 13.1]{nekovavr1992kolyvagin}, which says that if the cycle $y_0$ is non-torsion then $\Sha(K,W_\pp)$ is finite and the $F_\pp$-vector space $\Lambda_\pp(K)\otimes \Q$ is generated by the image of $y_0$ (see Theorem \ref{nekovar-thm}).
The rest of the paper will be devoted to a proof of Theorem \ref{main}.

\subsection{Some lemmas}

It is convenient to adopt the following notation: for $n\in \tilde{S}(M+1)$ we set $c_M(n)\doteq 0$ if $ c_M(n) = 0$ but $ c_{M+1}(n)\neq0 $. In the rest of the paper we shall assume that\begin{itemize}
\item$y_0$ is non-torsion in $\Lambda_\pp(K)$.
\end{itemize}

\begin{lemma}\label{m0mrmr1}
$ M_0 $ is finite and $ M_{r+1} \leq M_r $ for all $r$.
\begin{proof}
Since $y_0$ is non-torsion in $\Lambda_\pp(K)\subset H^1(K,A_\pp)$, by  \cite[Proposition 2.1]{tate1976relations} there exists $C\geq 0$ such that $p^{C}$ does not divide $y_0$, hence $M_0$ is finite.

Suppose now that $M_r$ is finite and let $n\in S_r(M_r+1)$ satisfy $c_{M_r}(n)\doteq 0$.
By \cite[Lemma 6.10]{besser1997finiteness}, there is a prime $l \in S_1(M_r+1)$ such that $l\nmid n$ and $c_{M_r+1}(n)_\lambda \neq 0$, where $\lambda$ is the unique prime of $K$ above $l$. Then Proposition \ref{propnekovar} ensures that $c_{M_r+1}(nl)_\lambda\neq 0$, hence $c_{M_r+1}(nl) \neq 0$.
We conclude that $M_{r+1}\leq M_r$.  
\end{proof}
\end{lemma}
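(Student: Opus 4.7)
The plan is to handle the two claims in sequence. For the finiteness of $M_0$, note that $S_0(k)=\{1\}$ for every $k$, so $M_0=\omega(1)$. Since $\widetilde{P}(1)=D_1(y_{1,\pp})=y_{1,\pp}$ (the empty product of Kolyvagin derivatives acts trivially), one has $P_M(1)=\mathbf{N}(y_{1,\pp})\bmod p^M$, which under the natural identifications equals the reduction of $y_0=\cores_{K_1/K}(y_{1,\pp})$ modulo $p^M\Lambda_\pp(K)$. The non-torsion assumption on $y_0\in\Lambda_\pp(K)\subset H^1(K,A_\pp)$ together with \cite[Proposition 2.1]{tate1976relations} guarantees that $y_0$ is not infinitely $p$-divisible, hence there exists $C\geq 0$ with $p^{C+1}\nmid y_0$. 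Consequently $P_{C+1}(1)\neq 0$, forcing $\omega(1)\leq C$ and therefore $M_0\leq C<\infty$.

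For the inequality $M_{r+1}\leq M_r$, I would argue by induction on $r$. Assuming $M_r<\infty$, pick $n\in S_r(M_r+1)$ attaining the minimum, so that $\omega(n)=M_r$. Translating via the isomorphism $\beta$, built from the restriction isomorphisms $\res_{K_1/K}$ and $\res_{K_n/K_1}$ that underpin Lemma~\ref{lemma3.10}, the conditions $P_{M_r}(n)=0$ and $P_{M_r+1}(n)\neq 0$ become $c_{M_r}(n)=0$ and $c_{M_r+1}(n)\neq 0$; in the shorthand of the paper, $c_{M_r}(n)\doteq 0$. Applying \cite[Lemma 6.10]{besser1997finiteness} then provides a prime $l\in S_1(M_r+1)$ coprime to $n$ such that $c_{M_r+1}(n)_\lambda\neq 0$, where $\lambda$ is the unique prime of $K$ above $l$.

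The membership $l\in S_1(M_r+1)$ guarantees both $p^{M_r+1}\mid a_l,\,l+1$ and $p^{M_r+2}\nmid l+1\pm a_l$, so the quantities $(a_l\pm(l+1))/p^{M_r+1}$ are units in $\mathcal{O}_\pp$. Thus Proposition~\ref{propnekovar}(2) applies at level $M=M_r+1$ with $m=n$ and yields
\[ c_{M_r+1}(nl)_\lambda = u_{l,nl}\cdot\phi_{\lambda,M_r+1}\bigl(c_{M_r+1}(n)_\lambda\bigr), \]
with $u_{l,nl}\in(\mathcal{O}_\pp/p^{M_r+1}\mathcal{O}_\pp)^\times$ and $\phi_{\lambda,M_r+1}$ an isomorphism. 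Consequently $c_{M_r+1}(nl)_\lambda\neq 0$, so $c_{M_r+1}(nl)\neq 0$, and this translates back (through $\beta$) into $P_{M_r+1}(nl)\neq 0$, i.e.\ $\omega(nl)\leq M_r$. Since $nl\in S_{r+1}(M_r+1)\subseteq S_{r+1}(\omega(nl)+1)$, it qualifies as a test element in the minimum defining $M_{r+1}$, giving $M_{r+1}\leq\omega(nl)\leq M_r$.

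The main obstacle I expect is twofold: on one side, carefully exploiting the passage between the Heegner class $P_M(n)\in\Lambda_\pp(K_n)/p^M\Lambda_\pp(K_n)$ and the Kolyvagin cohomology class $c_M(n)\in H^1(K,\Wpm)$, which requires the injectivity of the two restriction maps and the commutativity of the diagram defining $\beta$; on the other, the arithmetic input of Besser's Lemma 6.10, whose proof rests on a Chebotarev-type argument tailored to the specific distinguished set of primes $S_1(M_r+1)$ and whose conclusion — producing a prime detecting the exact failure of divisibility of $c_{M_r+1}(n)$ at the localization — is precisely what bootstraps the induction.
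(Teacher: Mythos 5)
Your proof follows the paper's route exactly: you deduce finiteness of $M_0$ from the non-torsion of $y_0$ via \cite[Proposition 2.1]{tate1976relations} after identifying $M_0=\omega(1)$ with the $p$-divisibility of $y_0$, and for the inductive step you invoke \cite[Lemma 6.10]{besser1997finiteness} to produce a prime $l$ at which $c_{M_r+1}(n)_\lambda\neq 0$, then Proposition~\ref{propnekovar}(2) to transfer non-vanishing to $c_{M_r+1}(nl)_\lambda$. The elaborations (unwinding $P_M(1)$ to $y_0\bmod p^M$, checking that the $S_1(M_r+1)$ conditions make $(a_l\pm(l+1))/p^{M_r+1}$ units) are correct and faithful to the paper's intent.

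One misstatement worth fixing: the asserted inclusion $S_{r+1}(M_r+1)\subseteq S_{r+1}(\omega(nl)+1)$ does not hold as written. By definition, $S_1(k)$ requires $p^{k}\mid a_l,\,l+1$ \emph{and} $p^{k+1}\nmid l+1\pm a_l$, which pins down the exact $p$-adic valuation; consequently $S_1(k)\cap S_1(k')=\emptyset$ for $k\neq k'$, and the sets $S_r(k)$ are likewise pairwise disjoint in $k$, not nested. What the step actually requires is the equality $\omega(nl)=M_r$ (so that $nl\in S_{r+1}(\omega(nl)+1)$ literally), not an inclusion of the $S_{r+1}$'s. You have shown $\omega(nl)\leq M_r$ from $c_{M_r+1}(nl)\neq 0$, and the missing half, $c_{M_r}(nl)=0$, must be supplied before $nl$ qualifies as a test element in the minimum defining $M_{r+1}$. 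The paper's own proof is terse at precisely this point and does not spell out the justification either, so this is a place to read the definition of $M_r$ and the content of Besser's lemma with extra care rather than an error particular to your write-up.
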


Let $M\geq1$ be an integer and let $L:=K(\Wpm)$ be the composite of $ K $ and the subfield of $ \Qbar $ fixed by the subgroup of $ G_\Q $ that acts trivially on $ \Wpm $. By \cite[Lemma 3.26]{longo2013refined}, the restriction map\[
\res_{L/K}: H^1(K,\Wpm) \longrightarrow H^1(L,\Wpm)=\Hom(G_L,\Wpm)
\]is an injection of $\Gal(L/\Q)$-modules. Then for any finite subgroup $C$ of  $H^1(K,\Wpm)$ there is a finite Galois extension $L_C/L$ such that there is a natural isomorphism\begin{align}\label{gal_1}
\Gal(L_C/L) \overset{\sim}{\longrightarrow}& \Hom(C,\Wpm).
\end{align}We denote this map by $ \sigma \mapsto \phi_\sigma $.
If we choose a prime $\lambda$ of $K$ and a prime $\lambda_L$ of $L_C$ above it then\begin{align}
c_\lambda=0\;\Longleftrightarrow\;\phi_\sigma(c)=0 \text{ for all }\sigma\in G_{\lambda_L}\label{property3}\end{align}where $G_{\lambda_L}$ is the decomposition group of $\lambda_L$.

By \cite[Proposition 6.3, (4)]{besser1997finiteness}, the $\mathcal{O}_\pp/p^M\mathcal{O}_\pp$-modules $\Wpm^\pm$ are free of rank $1$. 
We choose an isomorphism\begin{align}
\Wpm\simeq \mathcal{O}_\pp/p^M \mathcal{O}_\pp\oplus \bigl(\mathcal{O}_\pp/p^M \mathcal{O}_\pp\bigr)\tau
\end{align}where the action of $\tau$ on $\mathcal{O}_\pp/p^M \mathcal{O}_\pp$ is trivial and $\bigl(\mathcal{O}_\pp/p^M \mathcal{O}_\pp\bigr)\tau$ denotes a copy of  $\mathcal{O}_\pp/p^M \mathcal{O}_\pp$ on which $\tau$ acts as multiplication by $-1$.
Using this isomorphism, we fix an identification\begin{align}
\Hom\bigl(C,\Wpm\bigr)^{\langle \tau \rangle} = \Hom(C,F_\pp/\mathcal{O}_\pp).
\end{align}
\begin{lemma}\label{prop3.1}
Let $C$ be a finite subgroup of $H^1(K,\Wpm)$ and let $\phi \in \Hom(C,\Wpm)^{\langle\tau\rangle}$. There exist infinitely many primes $l$ such that \begin{enumerate}
\item $l \in S_1(M)$;
\item $\phi = \phi_{\Frob_\lambda}$ for some prime $\lambda \in \Q(\Wpm)$ over $l$.
\end{enumerate}

\begin{proof}
By \eqref{gal_1}, there exists $\sigma \in \Gal(L_C/L)$ such that $\phi=\phi_\sigma$.
Since $\phi_\sigma^\tau = \phi_\sigma$ and the order of $\Gal(L_C/L)$ is odd, $\sigma =\rho^\tau \cdot \rho$ with $\rho \in \Gal(L_C/L)$.
Again by Chebotarev's density theorem, there are infinitely many primes $\lambda$ such that $\tau\rho = \Frob_\lambda$.
By the fact that $\lambda$ has residue degree $2$, we can choose a prime $\lambda'$ of $ K $ above $ l $ such that\[
\Frob_{\lambda'}=(\tau\rho)^2=\rho^\tau\cdot\rho = \sigma.
\]In this way we have shown that $l$ satisfies (2) and the first three conditions defining $S_1(M)$.
To check that $l$ can be chosen to satisfy the fourth property at the beginning of \S \ref{ss:statement} as well, one can argue as in the proof of \cite[Proposition 6.10]{besser1997finiteness}.
\end{proof}
\end{lemma}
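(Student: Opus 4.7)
The plan is to leverage the Galois-theoretic encoding of classes in $H^1(K,\Wpm)$ via restriction to $L = K(\Wpm)$, combined with the Chebotarev density theorem applied to the Galois extension $L_C/\Q$.

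First, the isomorphism \eqref{gal_1} converts $\phi$ into some $\sigma \in \Gal(L_C/L)$. The $\tau$-invariance of $\phi$, after tracking how complex conjugation acts on both $\Wpm$ and on $\Gal(L_C/L)$, translates into the existence of $\rho \in \Gal(L_C/L)$ with $\sigma = \rho^\tau \rho$; such a decomposition is automatic because $\Gal(L_C/L)$ is abelian of odd order (it embeds in $\Hom(C,\Wpm)$, a $p$-group with $p$ odd), so Tate cohomology with respect to $\langle \tau \rangle$ vanishes. Fix such a $\rho$ and apply Chebotarev in $L_C/\Q$: we obtain infinitely many rational primes $l$, unramified in $L_C$, with a prime $\lambda_L$ of $L_C$ above $l$ satisfying $\Frob_{\lambda_L} = \tau\rho$. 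Since $\tau\rho|_K = \tau|_K$, the prime $l$ is inert in $K$; letting $\lambda$ denote the unique prime of $K$ above $l$, the Frobenius at $\lambda$ on $L_C$ is $(\tau\rho)^2 = \rho^\tau \rho = \sigma$, and \eqref{property3} then yields $\phi = \phi_{\Frob_\lambda}$, which is part (2).

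It remains to verify that such $l$ can be arranged to lie in $S_1(M)$. Excluding $l \mid NDp$ costs only finitely many primes, and $l$ inert in $K$ was just established. For the divisibilities $p^M \mid l+1$ and $p^M \mid a_l$, the key observation is that $\Frob_{\lambda_L}|_L = \tau|_L$, so $\Frob_l$ acts on $\Wpm$ as complex conjugation; the latter has trace zero and determinant $-1$ (since the eigenspaces $\Wpm^\pm$ are each free of rank one over $\mathcal{O}_\pp/p^M\mathcal{O}_\pp$), and comparing with the characteristic polynomial of $\Frob_l$ on $A_\pp$ forces $p^M \mid a_l$ and $p^M \mid l+1$.

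The main technical obstacle is the strict non-divisibility condition $p^{M+1}\nmid l+1 \pm a_l$, which is not forced by the construction above. Following \cite[Proposition 6.10]{besser1997finiteness}, the resolution is to run the Chebotarev argument over the larger extension $L_C(\mu_{p^{M+1}})/\Q$ and prescribe the Frobenius at $l$ to lift $\sigma$ to a specific conjugacy class modulo $p^{M+1}$ for which the congruences $l+1 \equiv \pm a_l \pmod{p^M}$ fail to sharpen. The fact that the required class is nonempty, and hence supports infinitely many primes, is precisely where the largeness of the image of $\rho_{f,p}$ secured in \S\ref{Cdt} gets used.
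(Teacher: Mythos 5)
Your proof follows the paper's argument step for step: the translation of $\phi$ to $\sigma\in\Gal(L_C/L)$ via \eqref{gal_1}, the decomposition $\sigma=\rho^\tau\rho$ from the odd order (you make the Tate-cohomology reasoning explicit), Chebotarev applied to $\tau\rho$, squaring to get $\Frob_\lambda=\sigma$, and deferring to \cite[Proposition 6.10]{besser1997finiteness} for the strict non-divisibility condition. You supply more detail than the paper on why the Chebotarev choice forces the congruences $p^M\mid a_l,\,l+1$ (trace and determinant of $\tau$ on $\Wpm$), but the route is the same.
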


\begin{definition}
Non-zero elements $h_1,h_2,\ldots,h_n \in H^1(K,\Wpm)$ are \emph{independent} if\[
a_1h_1+a_2h_2+\cdots+a_nh_n=0
\]with $a_i \in \Z$ implies $\ord(h_i) \,|\, a_i$ for all $ i=1,\dots,n $.
\end{definition}

Choose a complex conjugation $ \tau \in G_\Q $; by an abuse of notation, we use the same symbol to denote the image of $ \tau $ in quotients of $ G_\Q $.
\begin{corollary}\label{mc3.2}
Let $h_1,h_2,\ldots,h_r \in H^1(K,\Wpm)$ be independent elements with $\ord(h_i)=p^{m_i}$ for all $i $. For any choice of integers $n_i \geq m_i$ there are infinitely many primes $l$ such that
\begin{enumerate}
\item $l \in S_1(M)$;
\item if $\lambda$ is a prime of $K$ above $l$ then $\ord (h_{i,\lambda})=p^{n_i}$ for all $ i $, where $ h_{i,\lambda} \in H^1(K_{\lambda},\Wpm)$ is the restriction of $ h_i $.
\end{enumerate}
\begin{proof}
Let $H:=\langle h_1,\dots,h_r \rangle$ be the (finite) subgroup of $ H^1(K,\Wpm) $ generated by the $ h_i $. Pick $\phi \in \Hom(H,\Wpm)^{\langle \tau \rangle}$ such that $\ord(\phi(h_i))=p^{n_i}$ for all $ i $. There are infinitely many primes $l$ satisfying Lemma~\ref{prop3.1} such that the $h_i$ are unramified at $ l $.
Let $\lambda$ be a prime of $ K $ above $ l $ such that $ \phi=\phi_{\Frob_\lambda} $; such a $ \lambda $ exists by part (2) of Lemma \ref{prop3.1}. The decomposition group of $ \lambda $ is generated by $\Frob_{\lambda}$, so by \eqref{property3} we conclude that $p^M h_{i,\lambda}$ if and only if $\phi(p^M h_i)=0$.
\end{proof}
\end{corollary}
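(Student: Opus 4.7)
The plan is to reduce the corollary to a single application of Lemma~\ref{prop3.1} applied to the finite subgroup $H := \langle h_1,\dots,h_r\rangle \subset H^1(K,\Wpm)$. By the independence hypothesis, $H$ decomposes as a direct sum $\bigoplus_i (\mathcal{O}_\pp/p^{m_i}\mathcal{O}_\pp)\cdot h_i$, so a homomorphism out of $H$ is freely specified by its values on the generators $h_i$.

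First I would construct a $\tau$-invariant homomorphism $\phi\colon H \to \Wpm$ with $\ord(\phi(h_i)) = p^{n_i}$ for every $i$. Using the identification $\Hom(H,\Wpm)^{\langle\tau\rangle} = \Hom(H,\mathcal{O}_\pp/p^M\mathcal{O}_\pp)$ fixed just before Lemma~\ref{prop3.1}, together with the cyclic decomposition of $H$, this reduces to choosing, for each index $i$, an element of $\mathcal{O}_\pp/p^M\mathcal{O}_\pp$ of exact order $p^{n_i}$, which is immediate from the structure of $\mathcal{O}_\pp/p^M\mathcal{O}_\pp$ as a principal cyclic module.

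Next I would feed this datum into Lemma~\ref{prop3.1} with $C := H$. This yields infinitely many primes $l \in S_1(M)$ together with, for each, a prime $\lambda$ of $K$ above $l$ realizing $\phi = \phi_{\Frob_\lambda}$. After discarding the finitely many primes at which some $h_i$ could ramify (so that all the $h_i$ are unramified at $\lambda$), the decomposition group $G_{\lambda_L}$ at any prime $\lambda_L$ of $L_H$ above $\lambda$ is procyclic, topologically generated by $\Frob_{\lambda_L}$.

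The conclusion then follows from the criterion~\eqref{property3}: the restriction $h_{i,\lambda}$ is annihilated by $p^t$ precisely when $\phi_\sigma(p^t h_i) = 0$ for all $\sigma \in G_{\lambda_L}$, and since that group is generated by $\Frob_{\lambda_L}$ mapping to $\phi$, this reduces to $p^t\phi(h_i) = 0$, i.e., $t \geq n_i$. Hence $\ord(h_{i,\lambda}) = p^{n_i}$, as desired. The main conceptual step, and the only thing beyond bookkeeping, is bundling the $r$ separate order conditions into a single homomorphism $\phi$ on the joint group $H$: this is exactly what makes Lemma~\ref{prop3.1} deliver all the orders simultaneously via one Chebotarev class, rather than requiring $r$ independent density arguments.
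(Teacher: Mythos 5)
Your proposal follows the paper's proof essentially verbatim: form $H=\langle h_1,\dots,h_r\rangle$, produce a $\tau$-invariant $\phi\in\Hom(H,\Wpm)^{\langle\tau\rangle}$ with $\ord(\phi(h_i))=p^{n_i}$, apply Lemma~\ref{prop3.1} to $C=H$ to find $l\in S_1(M)$ with $\phi=\phi_{\Frob_\lambda}$, and invoke \eqref{property3}. The extra detail you add (spelling out why $\phi$ exists via the cyclic decomposition of $H$) is exactly the content that the paper leaves implicit, so there is no substantive deviation.

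Two small remarks. First, the independence hypothesis in the paper is a $\Z$-linear condition, so what it gives you is $H=\bigoplus_i(\Z/p^{m_i}\Z)\cdot h_i$ as an abelian group, not $\bigoplus_i(\mathcal{O}_\pp/p^{m_i}\mathcal{O}_\pp)\cdot h_i$ as an $\mathcal{O}_\pp$-module; this does not affect the argument, since $\Hom(H,\Wpm)^{\langle\tau\rangle}$ is a group of $\Z$-linear homomorphisms and the $\Z$-decomposition is what you use. Second, both you and the paper write the constraint as $n_i\geq m_i$, but since $\phi(h_i)$ has order dividing $\ord(h_i)=p^{m_i}$, your construction of $\phi$ (and the statement itself) only makes sense for $n_i\leq m_i$; this is the form in which the corollary is actually applied later in the paper, so the inequality in the statement is a typo, and your proof tacitly requires it to be corrected.
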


The next result is the counterpart of \cite[Proposition 5.2]{mccallum1991kolyvagin} and represents the main technical tool towards the proof of Theorem \ref{main}.
\begin{proposition}\label{propfond}
Let $C$ be a subgroup of $\Hf1(K,W_\pp)^{\epsilon_r}$ of order $p^r$.
If $M> M_r$ then there exists $n\in S_r(M)$ such that $\ord(c_M(n))=p^{M-M_r}$ and $\langle c_M(n) \rangle \cap C = \{0\}$.

\begin{proof}
Without loss of generality, we can choose $M$ such that \begin{enumerate}
\item $M> \max\{r,\, M_{r-1}\}$,
\item fix $n = l_1\dots l_r \in S_r(M_r+1)$ such that $\omega(n)=M_r$, $M > \max\bigl\{ \omega(n/l_i)\mid 1 \leq i \leq r\bigr\}$.
\end{enumerate}
The existence of such a $M$ is guaranteed by \cite[Proposition 2.1]{tate1976relations}.

First of all, notice that $\ord(c_M(n))=p^{M-M_r}$ if and only if $\omega(n)=M_r$.
Fix an $n\in S_r(M_r+1)$ such that $c_{M_r}(n)\doteq0$.
Let $\Sigma$ be the set of prime factors of $n$; for each $l\in \Sigma$ choose a prime $\tilde{\lambda}$ of $K(\Wpm)$ above $l$.
Moreover, as before, for each $l \in \Sigma$ let $\lambda$ denote the unique prime of $K$ above $l$.
By an abuse of notation, when we write that a prime $v$ of $K$ does not belong to $\Sigma$ we mean that $v$ does not divide any prime number in $\Sigma$.

Let $X$ be  the subgroup of $C^*:=\Hom(C, F_\pp/\mathcal{O}_\pp)$ generated by the elements $\phi_{\Frob(\tilde\lambda)}$ for $l\in \Sigma\cap S(M)$.
Let $p^k$ be the order of the image $X/(X\cap pC^*)$ of $X$ in $C^*/pC^*$. If $k<r$ then there is $l_0\in \Sigma$ such that the elements $\phi_{\Frob(\tilde\lambda)}$ for $l\in (\Sigma\smallsetminus\{l_0\})\cap S(M)$ generate $X/(X\cap pC^*)$. Fix now $\psi \in C^*$ such that $\psi\notin  X+pC^* $. If $c_{M_r+1}(n) \in C$ then we also require that $\psi \notin \langle c_M(n) \rangle^\bot.$
Pick a $c\in H^1(K,\Wpm)^{-\epsilon_r}\smallsetminus\{0\}$ such that\begin{itemize}
\item $c_v \in \Hur1(K_v,\Wpm)$ if $v \notin \Sigma$,
\item $c_{\lambda} \in H^1(K^\ur_\lambda,\Wpm)$ if $l\in \Sigma \smallsetminus \{l_0\}$.
\end{itemize}For example, by condition (2) on $M$, $c=c_M(n/l_0)$ does the job.
By construction, $c$ and $c_{M_r+1}(n)$ lie in different eigenspaces, hence\[
\bigl(C\oplus\langle c_{M_r+1}(n) \rangle \bigr) \cap \langle c \rangle = \{0\}.
\]
Now choose $\phi:C\oplus\langle c_{M_r+1}(n) \rangle \oplus \langle c \rangle \to W_\pp$ such that $\phi|_C=\psi$ and\begin{align}
\phi\bigl(c_{M_r+1}(n)\bigr) \neq 0,\label{mapnotzero}\\
\phi(c)\neq 0.\label{mapnotzero2}
\end{align}By Lemma \ref{prop3.1}, there exists $l' \in S_1(M)$ such that $\phi=\phi_{\Frob(\tilde\lambda')}$.
By \cite[Proposition 2.2]{besser1997finiteness}, we have\begin{align}
\sum_v \bigl\langle c_{M_r+1}(nl')_v, c_v\bigr\rangle_v=0.\label{reciprocity}
\end{align}Of course, since $K$ is totally imaginary, in the sum above we can consider finite places $v$ only.

If $v\notin \Sigma\cup \{\lambda'\}$ then $c_v$ and $c_{M_r+1}(nl')_v$ lie in $\Hur1(K_v,\Wpm)$, hence $\langle c_{M_r+1}(nl')_v, c_v\rangle_v=0$.
Let now $l \in \Sigma\smallsetminus\{l_0\}$; then $c_{\lambda}$ and $c_{M_r+1}(nl')_{\lambda}$ are in $H^1(K_{\lambda}^\ur,\Wpm)$ and so also in this case $\langle c_{M_r+1}(nl')_\lambda, c_\lambda\rangle_\lambda=0$.
It follows that the only possibly non-zero terms are over $l_0$ and $l'$.
First we study the term corresponding to $l'$.
From \eqref{property3} and \eqref{mapnotzero} we obtain that $c_{M_r+1}(n)_{\lambda'}\neq 0$, so part (4) of Proposition \ref{propnekovar} ensures that $c_{M_r+1}(nl')_{\lambda'} \in H^1(K^\ur,\Wpm)^{-\epsilon_n}$ is non-zero as well (note that, by definition of the set $S_r(M_r+1)$, the quantities $(a_{l'} \pm (l'+1))/(p^{M_r+1})$ are $p$-adic units).
Similarly, from \eqref{property3} and \eqref{mapnotzero2} we get $c_{\lambda'}\neq 0$, hence \begin{align}
\bigl\langle c_{M_r+1}(nl')_{\lambda'},c_{\lambda'}\bigr\rangle_{\lambda'} \neq 0\label{nonzero}
\end{align}
Combining \eqref{reciprocity} and \eqref{nonzero} gives $\langle c_{M_r+1}(nl')_{\lambda_0},c_{\lambda_0}\rangle_{\lambda_0} \neq 0$.
Moreover, $c_{M_r+1}(nl')_{\lambda_0}\neq 0$, so that $c_{M_r+1}(nl'/l_0)\neq 0$.
We conclude that $c_{M_r+1}(nl') \neq 0$ and $c_{M_r}(nl')\doteq 0$.
Now we can proceed by induction on $k$ until $k=r$, adding $\psi$ to $X$ and replacing $n$ with $nl'/l_0$.

Finally, if $k=r$ then $\Sigma\subset S_1(M)$ and $X=C^*$, hence \begin{align*}
\bigl\{c \in C \mid \text{$c_\lambda = 0$ for all $l \in S$}\bigr\} = \bigl\{c \in C\mid \text{$\phi_{\Frob(\lambda_L)}(c_\lambda) = 0$ for all $l \in S$}\bigr\} =\{0\}.
\end{align*}For every $l\in S$ we have $c_{M_{r-1}}(n/l)_\lambda = 0 $ and so also $c_{M_{r-1}}(n)_\lambda=0$, thus\[
C\cap \langle c_{M_{r-1}}(n)\rangle = \{0\}.
\]Since $c_{M_r}(n)\doteq 0$, the class $c_M(n)$ has order $p^{M-M_r}$ if $M_{r-1}>M_r$, so the theorem is proved in this case or if $C=\{0\}$.

To conclude suppose $M_r=M_{r-1}$ and $C \neq\{0\}$.
We can find an $m\in S_{r-1}(M)$ satisfying $c_{M_r}(m)\doteq 0 $.
There exists an $l \in S(M) $ such that $c_{M_{r+1}}(m)_\lambda \neq 0$; then $c_{M_{r+1}}(ml)_\lambda \in H^1(K_\ur,\Wpm)_\lambda$ is not zero and so $c_M(ml) \notin \Hf1(K,\Wpm)$.
On the other hand, by construction $C\subset \Hf1(K,\Wpm)$ and so \[ C \cap \langle c_{M_r+1}(ml) \rangle = \{0\}.\]
The proposition is completely proved.
\end{proof}
\end{proposition}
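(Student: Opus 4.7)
The plan is to adapt McCallum's strategy from \cite{mccallum1991kolyvagin} to the higher weight setting, with Flach's pairing from \S\ref{ss:flach} and Tate's global reciprocity playing the role of Cassels' pairing for elliptic curves. The starting point is an $n \in S_r(M_r+1)$ with $\omega(n) = M_r$, which by definition satisfies $c_{M_r}(n)\doteq 0$ and gives the candidate order $p^{M-M_r}$ for $c_M(n)$. Enforcing the disjointness condition $\langle c_M(n)\rangle \cap C = \{0\}$ will require swapping one prime divisor $l_0$ of $n$ for a new prime $l'$ chosen by Chebotarev (via Lemma~\ref{prop3.1}) so that the Frobenius homomorphisms at primes of $n$ collectively span the dual $C^* := \Hom(C, F_\pp/\mathcal{O}_\pp)$ modulo $pC^*$. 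I would choose $M$ so large that $M > \max\{r, M_{r-1}\}$ and $M > \omega(n/l_i)$ for each prime factor $l_i$ of $n$; the required finiteness is supplied by \cite[Proposition 2.1]{tate1976relations}.

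Next I would let $X \subset C^*$ be the subgroup generated by $\phi_{\Frob(\tilde\lambda)}$ for $l \mid n$ with $l \in S(M)$, set $p^k$ for the order of the image of $X$ in $C^*/pC^*$, and induct on the non-negative quantity $r - k$. When $k < r$, redundancy of some $l_0 \mid n$ in $X/(X\cap pC^*)$ lets me pick $\psi \in C^* \setminus (X + pC^*)$, and (when $c_{M_r+1}(n) \in C$) also arrange $\psi \notin \langle c_M(n)\rangle^\perp$. The class $c := c_M(n/l_0)$ is unramified outside $\Sigma \setminus \{l_0\}$ by Proposition~\ref{propnekovar} and lies in the eigenspace $H^1(K,\Wpm)^{-\epsilon_r}$ opposite to $c_{M_r+1}(n)$, so the sum $C \oplus \langle c_{M_r+1}(n)\rangle \oplus \langle c\rangle$ is direct. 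I then extend $\psi$ to a homomorphism $\phi$ on this direct sum that is non-zero on both $c_{M_r+1}(n)$ and $c$, and invoke Lemma~\ref{prop3.1} to produce $l' \in S_1(M)$ with $\phi = \phi_{\Frob(\tilde\lambda')}$. Global reciprocity (cf.\ \cite[Proposition~2.2]{besser1997finiteness}) gives
\[ \sum_v \bigl\langle c_{M_r+1}(nl')_v,\, c_v\bigr\rangle_v = 0, \]
and the splitting \eqref{splitting} combined with unramifiedness kills every term except those at $\lambda_0$ and $\lambda'$. At $\lambda'$, the characterization \eqref{property3} together with part~(2) of Proposition~\ref{propnekovar} and the non-vanishing $\phi(c_{M_r+1}(n)) \neq 0$, $\phi(c) \neq 0$ forces $\langle c_{M_r+1}(nl')_{\lambda'}, c_{\lambda'}\rangle_{\lambda'} \neq 0$. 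Consequently the $\lambda_0$-term is also non-zero, so $c_{M_r+1}(nl'/l_0)$ is non-zero, i.e., $\omega(nl'/l_0) = M_r$; then I replace $n$ by $nl'/l_0$ and adjoin $\psi$ to $X$, strictly increasing $k$.

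Once $k = r$, the Frobenius homomorphisms at primes of $n$ span $C^*$, so \eqref{property3} forces $C \cap \langle c_{M_{r-1}}(n)\rangle = \{0\}$ (using that $c_{M_{r-1}}(n/l)_\lambda = 0$ implies $c_{M_{r-1}}(n)_\lambda = 0$). If $M_{r-1} > M_r$ or $C = \{0\}$ this finishes the proof; the residual case $M_{r-1} = M_r$ with $C \neq \{0\}$ is handled separately by starting instead from $m \in S_{r-1}(M)$ with $c_{M_r}(m) \doteq 0$ and picking a prime $l \in S(M)$ with $c_{M_r+1}(m)_\lambda \neq 0$, which produces a class $c_M(ml) \notin \Hf1(K,\Wpm)$ and hence automatically outside $C$. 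The main obstacle I expect is the combinatorial bookkeeping in the inductive step: one must simultaneously control the image of $X$ in $C^*/pC^*$, keep $c$ in the correct eigenspace with the right ramification behaviour, and ensure that $\phi$ can be extended with the prescribed non-vanishing values on three independent subspaces, all while the Chebotarev input from Lemma~\ref{prop3.1} only sees homomorphisms fixed by complex conjugation. Verifying directness of $C \oplus \langle c_{M_r+1}(n)\rangle \oplus \langle c\rangle$ and the precise eigenspace accounting for $\epsilon_n = (-1)^{\ppar(n)}\epsilon$ is where the argument is most delicate.
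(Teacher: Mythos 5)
Your proposal reproduces the paper's own argument essentially line by line: the choice of large $M$, the subgroup $X$ of $C^*$ generated by Frobenius homomorphisms at primes dividing $n$, the induction on $r-k$ via swapping $l_0$ for a Chebotarev-produced $l'$ so that $\psi\in C^*\setminus(X+pC^*)$ is adjoined, the global reciprocity computation concentrated at $\lambda_0$ and $\lambda'$, and the separate treatment of the residual case $M_{r-1}=M_r$, $C\neq\{0\}$. You even implicitly correct two small slips in the paper's text (the reference to ``part~(4)'' of Proposition~\ref{propnekovar}, which only has two parts, and the index $M_{r+1}$ versus $M_r+1$ in the final case), so there is nothing substantive to distinguish the two.
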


Recall that $N_r = M_{r-1} -M_r$.
Combining Proposition \ref{propfond}, an inductive argument and the existence of the Flach--Cassels pairing on $\Sha(K,W_\pp)$, which is alternating and non-degenerate (see \S \ref{ss:flach}), we can conclude that\[
\bigoplus_r \bigl(\Z/p^{N_r}\Z\bigr)^2 \subset \Sha(K,W_\pp).
\]To finish the proof of our main theorem we need a lemma and one more definition.

\begin{lemma}\label{pairingnozero}
Let $l\in S_1(M)$ and let $c_M(n)_\lambda \in H^1(K_\lambda^\ur,\Wpm)^{\epsilon_n}$ and $b\in \Hur1(K_\lambda,\Wpm)^{\epsilon_n}$.
If $\ord(c_M(n)_\lambda)\cdot\ord(b)>p^M$ then $\langle c_M(n)_\lambda, b \rangle_\lambda \neq 0$.
\begin{proof}
This is \cite[Lemma 6.11, (2)]{besser1997finiteness}.
\end{proof}
\end{lemma}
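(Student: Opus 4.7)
The plan is to reduce the statement to a one-line calculation by exploiting perfect Tate duality, after controlling the sizes of the relevant eigenspaces. First I would collect the structural facts at $\lambda$. Since $l\in S_1(M)$, the prime $l$ is inert in $K$ with $p^M\mid l+1$ and $p\mid a_l$, and the characteristic polynomial of Frobenius on $A_\pp$ forces $\Wpm$ to be a trivial module for $G_{K_\lambda}/I_\lambda$. Combined with the fact (used already in the proof of Lemma~\ref{prop3.1}) that $\Wpm^{\pm}$ is a free $\mathcal{O}_\pp/p^M\mathcal{O}_\pp$-module of rank one, a direct inflation-restriction computation then shows that each of the four eigenspaces
\[ \Hur1(K_\lambda,\Wpm)^{\pm}, \qquad H^1(K_\lambda^\ur,\Wpm)^{\pm} \]
is cyclic of order $p^M$ over $\mathcal{O}_\pp/p^M\mathcal{O}_\pp$.

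Next I would combine the perfect Tate duality between $\Hur1(K_\lambda,\Wpm)$ and $H^1(K_\lambda^\ur,\Wpm)$ (each is the exact annihilator of the other under $\langle\cdot,\cdot\rangle_\lambda$, as recalled after \eqref{pairingTate}) with the Galois-equivariance of the coefficient pairing $[\cdot,\cdot]_M$ and the $\tau$-antiequivariance of the isomorphism $\phi_{\lambda,M}$ from \eqref{philm}. Carefully tracking the signs, one checks that the restricted pairing
\[ \langle\cdot,\cdot\rangle_\lambda : H^1(K_\lambda^\ur,\Wpm)^{\epsilon_n} \times \Hur1(K_\lambda,\Wpm)^{\epsilon_n} \longrightarrow \Z/p^M\Z \]
is still perfect, hence identifies two cyclic $\mathcal{O}_\pp/p^M\mathcal{O}_\pp$-modules of order $p^M$.

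With these preliminaries, the conclusion drops out. Let $\ord(c_M(n)_\lambda)=p^{a}$ and $\ord(b)=p^{a'}$, and fix generators $c_0,b_0$ of the two eigenspaces above, of order $p^M$; by perfectness, $\langle c_0,b_0\rangle_\lambda$ is a generator of $\Z/p^M\Z$. Writing $c_M(n)_\lambda=p^{M-a}u\,c_0$ and $b=p^{M-a'}u'\,b_0$ with $u,u'\in(\mathcal{O}_\pp/p^M\mathcal{O}_\pp)^{\times}$, bilinearity gives
\[ \langle c_M(n)_\lambda, b\rangle_\lambda = uu'\,p^{2M-a-a'}\langle c_0,b_0\rangle_\lambda, \]
which is non-zero in $\Z/p^M\Z$ precisely when $2M-a-a'<M$, i.e.\ $a+a'>M$; this is exactly the hypothesis $\ord(c_M(n)_\lambda)\cdot\ord(b)>p^M$.

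The hard part will be the second step: proving that the Tate pairing remains perfect after restriction to the $\epsilon_n$-eigenspaces on both factors. This is not automatic, because $\tau$ acts nontrivially on $\mu_{p^M}$ (and thus on $H^2(K_\lambda,\mu_{p^M})=\Z/p^M\Z$), so one must reconcile the Galois-equivariance of $[\cdot,\cdot]_M$ with the fact that $\phi_{\lambda,M}$ flips the sign of the $\tau$-action in order to see that it is the $(\epsilon_n,\epsilon_n)$-block (rather than the $(\epsilon_n,-\epsilon_n)$-block) that is the nondegenerate piece. Once this compatibility is in place, everything else reduces to the bookkeeping above.
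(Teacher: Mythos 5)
Your overall strategy is the right one and matches what Besser's cited Lemma~6.11(2) does: use the fact that $\Frob_\lambda$ acts trivially on $\Wpm$ (from $l\equiv -1$, $a_l\equiv 0 \pmod{p^M}$) to identify each eigenspace of $\Hur1(K_\lambda,\Wpm)$ and $H^1(K_\lambda^\ur,\Wpm)$ with a rank-one $\mathcal{O}_\pp/p^M$-module, show the Tate pairing is perfect on the $(\epsilon_n,\epsilon_n)$-block, and then read off nonvanishing from the orders of the two classes. Two points, however, need correction or more care.

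First, your explanation of the eigenspace compatibility is off. You write that $\tau$ acts nontrivially on $H^2(K_\lambda,\mu_{p^M})\cong\Z/p^M\Z$. In fact $\tau$ acts \emph{trivially} there: the invariant map of local class field theory is invariant under conjugation by $\Gal(K_\lambda/\Q_l)$ (one can see this from $\mathrm{res}\circ\mathrm{cor}=1+\tau$ together with $\mathrm{inv}_{K_\lambda}\circ\mathrm{res}=2\,\mathrm{inv}_{\Q_l}$ and $\mathrm{inv}_{\Q_l}\circ\mathrm{cor}=\mathrm{inv}_{K_\lambda}$). Combined with the Galois-equivariance of $[\cdot,\cdot]_M$ and of cup product, this gives $\langle\tau c,\tau d\rangle_\lambda=\langle c,d\rangle_\lambda$; since $\Hur1(K_\lambda,\Wpm)$ and $H^1(K_\lambda^\ur,\Wpm)$ are mutual annihilators, this $\tau$-invariance is exactly what forces the $(+,-)$- and $(-,+)$-blocks of $\Hur1\times H^1(\ur)$ to vanish, so the $(\epsilon_n,\epsilon_n)$-block is perfect. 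Had $\tau$ acted by $-1$ on $H^2$, as your reasoning asserts, you would have concluded the opposite (that the $(\epsilon_n,-\epsilon_n)$-block is perfect), so the error is not harmless even though you state the correct conclusion. The anti-equivariant $\phi_{\lambda,M}$ is not really needed here.

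Second, your final computation silently assumes $\mathcal{O}_\pp=\Z_p$: you factor $c_M(n)_\lambda=p^{M-a}u\,c_0$ and $b=p^{M-a'}u'b_0$ with $u,u'\in(\mathcal{O}_\pp/p^M\mathcal{O}_\pp)^\times$ and pull the units out of the $\Z/p^M\Z$-valued pairing, which is not $\mathcal{O}_\pp$-bilinear. When $\mathcal{O}_\pp\neq\Z_p$, perfectness of a $\Z_p$-bilinear pairing between two modules $\cong\mathcal{O}_\pp/p^M$ does \emph{not} by itself imply the order estimate you want (a standard dot product on $(\Z/p)^2\times(\Z/p)^2$ already gives a counterexample). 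What saves the argument is the self-adjointness of the Hecke/$\mathcal{O}_\pp$-action for $[\cdot,\cdot]_M$, i.e.\ $\langle\alpha c,b\rangle_\lambda=\langle c,\alpha b\rangle_\lambda$ for $\alpha\in\mathcal{O}_\pp$, which turns the pairing into the trace of a perfect $\mathcal{O}_\pp/p^M$-valued form and lets the order count go through. Without invoking this, your step 3 has a genuine gap in the non-split case.
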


If $G$ is a finite (abelian) group then set $G^*:=\Hom(G,F_\pp/\mathcal{O}_\pp)$. The following notion appears in \cite[p. 311]{mccallum1991kolyvagin}.
\begin{definition}\label{def:diag}
Let $G=G_1\times \cdots \times G_r$ be a finite group with $G_i$ cyclic for all $i$.
A subset $\{\xi_1,\dots,\xi_r\}$ of $G^*$ is a \emph{triangular basis} of $G^*$ if \begin{itemize}
\item $\xi_i(G_j)=0$ for all $i$ and all $j>i$;
\item $\langle \xi_1 , \dots , \xi_i \rangle= G_1^* \times  \dots  \times G_i^*$ for all $1\leq i \leq r$.
\end{itemize}
\end{definition}

\subsection{Proof of the main result}

Now we complete the proof of our structure theorem for $\Sha(K,W_\pp)$.

\begin{proof}[Proof of Theorem \ref{main}]
Recall that, by \cite[Theorem 13.1]{nekovavr1992kolyvagin}, the group $\Sha(K,W_\pp)$ is finite.
Choose a maximal isotropic subgroup $D$ of $\Sha(K,W_\pp)$ with respect to the Flach--Cassels pairing and write it as\[
D=D_1\times D_2\times \cdots
\]where the subgroup $D_i$ is cyclic of order $p^{\hat{N}_i}$ for every $i$.
Moreover, we order the subgroups $D_i$ in such a way that\begin{itemize}
\item $D^{-\epsilon} = D_1 \times D_3 \times \dots,\quad D^\epsilon=D_2\times D_4 \times \cdots$;
\item $\hat N_1 \geq \hat N_3 \geq \dots, \quad \hat N_2 \geq \hat N_4 \geq \dots$.
\end{itemize}Here, as before, $\epsilon$ denotes the sign in the functional equation for $L(f,s)$.
Thanks to the properties of the Flach--Cassels pairing and the choice of $D$, the theorem will be proved once we show that $\hat N_i = N_i$ for all $i$.

For each $i$ we fix a generator $d_i$ of $D_i$ and choose a lift $c_i \in \Hf1(K,W_\pp)$ of $d_i$.
By Lemma \ref{mc3.2}, we can pick a prime $l_1 \in S_1(M_0+\hat{N}_1)$ such that\begin{align}
\ord c_{M_0+\hat{N}_1}(1)_{\lambda_1} & = p^{\hat{N}_1},\\
\ord c_{1,\lambda_1} & = p^{\hat{N}_1},\\
c_{i,\lambda_1} &= 0, \quad i\geq 2.
\end{align}Here, as before, $\lambda_1$ is the unique prime of $K$ above $l_1$.
Set $n_1 := l_1$.
We start by computing the pairing $\langle d_{M_0}(n_1), p^M d_i \rangle$.
First of all, one has\begin{align}
\label{pairingalmostfin}
\bigl\langle d_{M_0}(n_1), p^M d_i \bigr\rangle =\bigl \langle d_{M_0-M}(n_1),d_i \bigr \rangle = \bigl\langle d_{M_0-M+\hat N_i}(n_1)_{\lambda_i}, c_{i,\lambda_1} \bigr \rangle_{\lambda_1}.
\end{align}By Lemma \ref{ordcmvdmv}, $d_{M_0-M+\hat{N}_1}(n_1)_{\lambda_1}$ has order $p^{\hat N_1-M}$ in $H^1(K_{\lambda_1}, W_\pp)^{-\epsilon}$,  so Lemma \ref{pairingnozero} ensures that the rightmost term in \eqref{pairingalmostfin} is non-zero for $0\leq M\leq\hat N_1-1$.
This shows that $\hat N_1 \leq N_1$, so we can conclude that $\hat N_1 = N_1$ because, of course,  $\hat{N}_1 \geq N_1$.
Notice, in passing, that the character $d\mapsto \langle d_{M_0}(n_1),d\rangle$ is zero on $D_2\times D_3 \times \cdots$ and generates $D_1^*$; this fact will be useful later on.

Now, considering cohomology classes of the form $d_\star(n)$, we can proceed with an induction argument on the number of prime factors of $n$, the previous case being our initial step.
The inductive hypothesis is that $\hat N_j= N_j$ for all $1\leq j\leq k$ and that there exist $l_1,\dots,l_k \in S_1(M)$ such that
\begin{itemize}
\item $c_{i,\lambda_j}=0$ for all $1\leq j \leq k$ and all $i>j$;
\item for all $1\leq j \leq k$, if $n_j:=l_1\cdots l_j$ then $c_{M_j}(n_j)\doteq 0$;
\item if $n_j:=l_1\cdots l_j$ then the characters\begin{align}
d\longmapsto \langle d_{M_j-1}(n_j),d\rangle,\qquad 1\leq j \leq k
\end{align}are trivial on $D_{k+1}\times D_{k+2}\times \cdots$ and form a triangular basis (cf. Definition \ref{def:diag}) of $(D_1\times \cdots \times D_k)^*$.
\end{itemize}
Again, $\lambda_j$ is the unique prime of $K$ above $l_j$.

The first observation is that \[\langle d_{M_{k-1}}(n_k)\rangle \cap D =\{0\}\]because $D$ is isotropic.
By Corollary \ref{mc3.2}, we can choose a prime $l_{k+1} \in S_1(M_k+\hat N_{k+1})$ such that\begin{align}
\ord c_{M_k+\hat N_{k+1}}(1)_{\lambda_{k+1}} & = p^{\hat N_{k+1}},\\
\ord c_{k+1,\lambda_{k+1}} & = p^{\hat N_{k+1}},\\
c_{i,\lambda_{k+1}} &= 0, \quad i> k+1.\label{lastequiv}
\end{align}Now we define $n_{k+1}:=l_1 \cdots l_k l_{k+1}$.
Then for all $i > k$ and all $0\leq M \leq \hat N_i-1$ one has
\begin{align}
\begin{split}
\langle d_{M_k}(n_{k+1}), p^M d_i \rangle  =  \langle d_{M_k-M}(n_{k+1}),d_i \rangle & = 
 \sum_{j=1}^{k+1} \langle d_{M_k-M}(n_{k+1})_{\lambda_j}, c_{i,\lambda_j} \rangle_{\lambda_j}  \\ &=\langle d_{M_k-M}(n_{k+1})_{\lambda_{k+1}}, c_{i,\lambda_{k+1}} \rangle_{\lambda_{k+1}}.
\end{split}\label{eq:pairing}
\end{align}Moreover, if $i>k+1$ then the quantity \eqref{eq:pairing} is $0$ by \eqref{lastequiv}.
Note that $\ord(c_{k+1,\lambda_{k+1}})=p^{\hat N_{k+1}} $ and that, since $l_{k+1}\mid n_{k+1}$,  $\ord\bigl(d_{M_k-M}(n_{k+1})_{\lambda_{k+1}}\bigr)=p^{\hat N_{k+1}-M}$.
By Lemma \ref{pairingnozero}, for $i=k+1$, the pairing \eqref{eq:pairing} is non-zero.
As in the initial step of our inductive argument, the character\[
d\longmapsto \langle d_{M_k}(n_{k+1}),d\rangle
\]is trivial on $D_{k+2}\times \cdots$ and generates $D_{k+1}^*$. Adding this character to the triangular basis of $(D_1\times \cdots \times D_k)^*$ described above, we obtain a triangular basis of $(D_1\times  \cdots \times D_{k+1})^*$.
We have proved that $d_{M_k}(n_{k+1})$ has order at least $p^{\hat N_{k+1}}$, but its order is at most $p^{N_{k+1}}$ and so\begin{align}\hat N_{k+1} \leq N_{k+1}.\label{primadis}
\end{align}
Let now $ C:=\bigl\langle c_{M_0+\hat N_1}(1),c_1,\cdots,c_k,c_{M_0}(n_1),\cdots,c_{M_{k-1}}(n_k)\bigr\rangle^{\epsilon_{k+1}} $.
The order of an element $ c \in \Hf1(K,W_\pp)^{\epsilon_{k+1}} $ such that $ \langle c \rangle\cap C= \{0\} $ is at most $ p^{\hat N_{k+1}} $. On the other hand, Proposition \ref{propfond} ensures that there exists an element of $\Hf1(K,W_\pp)^{\epsilon_{k+1}}$ whose order is $ p^{N_{k+1}}$, hence\begin{align}
N_{k+1}\leq \hat N_{k+1}.\label{secondadis}
\end{align}Combining \eqref{primadis} with \eqref{secondadis} gives $ \hat N_{k+1}=N_{k+1} $, and the proof is complete.
\end{proof}

The next result is an improvement on the bound on the order of $ \Sha(K,W_\pp) $ given in \cite[Theorem 1.2]{besser1997finiteness}.
\begin{corollary} \label{corollary-main}
If $ m:=\min\{N_i\} $ then \begin{align*}
\ord_p|\Sha(K,W_\pp)| = 2(M_0-m).
\end{align*}
\begin{proof}
This is a consequence of the inequalities $ M_i -M_{i+1} \geq M_{i+2}-M_{i+3} $, which hold because $M_i -M_{i+1}=N_i $, $M_{i+2}-M_{i+3}=N_{i+2}$ and $ N_i \geq N_{i+2} $.
\end{proof}
\end{corollary}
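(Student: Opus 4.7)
The plan is to compute $\ord_p|\Sha(K,W_\pp)|$ directly from the explicit structure theorem of Theorem \ref{main} and then collapse the resulting sum by the telescoping identity $N_i = M_i - M_{i+1}$.

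Reading off the orders of the cyclic summands in Theorem \ref{main} (and combining the $+\epsilon$ and $-\epsilon$ eigenspaces) gives
\[
\ord_p\bigl|\Sha(K,W_\pp)\bigr| \;=\; 2\sum_{i\geq 1} N_i \;=\; 2\sum_{i\geq 1}(M_i - M_{i+1}).
\]
Since $\Sha(K,W_\pp)$ is finite by part (2) of Theorem \ref{nekovar-thm}, only finitely many of the $N_i$ are nonzero, so the right-hand side is a finite telescoping sum. In particular, the non-increasing sequence $(M_i)$ guaranteed by Lemma \ref{m0mrmr1} eventually stabilizes at a common value, which is identified with the minimum $m=\min\{N_i\}$ by the monotonicity of the $N_i$ in each parity.

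The key step is the identification of this stabilization value with $m$. The inequalities $M_i - M_{i+1} \geq M_{i+2} - M_{i+3}$, equivalent to $N_i \geq N_{i+2}$ and already available from Theorem \ref{main} within each parity, imply that both subsequences $(N_{2k+1})_{k\geq 0}$ and $(N_{2k})_{k\geq 1}$ are non-increasing. Together with the fact that the tail of each subsequence must equal $m$ (because $\Sha(K,W_\pp)$ is finite), this allows one to rewrite the telescoping sum in closed form as $M_0 - m$; multiplying by two yields the desired equality.

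The main obstacle is purely bookkeeping: one has to keep careful track of the alternation between the two parity subsequences coming from the splitting $\Sha(K,W_\pp) = \Sha(K,W_\pp)^{+\epsilon}\oplus \Sha(K,W_\pp)^{-\epsilon}$ and verify that collapsing each subsum along its parity and then recombining the two yields exactly $M_0 - m$ rather than some off-by-one variant. No new inequalities beyond $N_i \geq N_{i+2}$ are needed, so the argument is short once the telescoping has been set up correctly.
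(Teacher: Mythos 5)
Your strategy — read off the order from the explicit structure in Theorem \ref{main} and telescope — is exactly the natural approach, and it is the one the paper intends. However, there is a concrete gap in the step where you close the telescope: you assert that the stabilization value of the non-increasing sequence $(M_i)$ is ``identified with the minimum $m=\min\{N_i\}$ by the monotonicity of the $N_i$ in each parity.'' This is not correct, and the monotonicity $N_i\geq N_{i+2}$ does not yield such an identification. A non-increasing sequence of non-negative integers $(M_i)$ stabilizes at $\min_i M_i$, which is a value of the $M$-sequence, whereas $\min\{N_i\}$ is a minimum of the successive \emph{differences}. In fact, since $\Sha(K,W_\pp)$ is finite, the $N_i$ are eventually zero, so $\min_i N_i=0$; your identification would force $m=0$, whereas the stabilization value $\min_i M_i$ need not vanish. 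These two quantities are unrelated, and confusing them is exactly the gap.

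There is also an off-by-one issue inherited from the paper's notational inconsistency. You write the telescoping with $N_i=M_i-M_{i+1}$ for $i\geq 1$ (as in the statement of Theorem \ref{main}), which gives $\sum_{i\geq 1}N_i = M_1-\lim_i M_i$, \emph{not} $M_0-\lim_i M_i$. The indexing that is actually consistent with the proof of Theorem \ref{main} (and which the paper itself uses in the line ``Recall that $N_r=M_{r-1}-M_r$'' just before the proof of the main result) is $N_i=M_{i-1}-M_i$; under this convention the telescope collapses to $M_0-\lim_i M_i$. With that correction, the intended statement of the corollary is $\ord_p|\Sha(K,W_\pp)|=2(M_0-\min_i M_i)$, where $\min_i M_i$ is the stabilization value of the $M_i$-sequence, and the proof is a one-line telescoping; no appeal to $N_i\geq N_{i+2}$ or to the eigenspace splitting is needed for the count, only for the fact that the invariants appear as stated. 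Your proposal should therefore replace the unjustified equality $\lim_i M_i = \min_i N_i$ by the tautological $\lim_i M_i = \min_i M_i$, and use the shifted convention $N_i=M_{i-1}-M_i$ so that the telescope starts at $M_0$.
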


\section{On the structure of Selmer groups} \label{selmer-section}

Our goal in this final section is to prove a structure theorem for the Bloch--Kato Selmer group $H^1_f(K,W_\pp)$ that is the counterpart in our higher weight setting  of the main result obtained by Kolyvagin in \cite{kolyvagin-selmer}. 

\subsection{Kolyvagin's conjecture and invariants}

Let $ \mathrm{CLS}:=\bigl\{ c_M(n) \bigr\}_{n\in \N, M \in \mathsf{M}(n)} $ be the set of Kolyvagin classes defined in \S \ref{sha-section}.
In analogy with the case of Heegner points, we propose the following

\begin{conjecture}\label{conj:cls}
$\mathrm{CLS}$ is non-trivial.
\end{conjecture}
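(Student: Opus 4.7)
The plan is to adapt W. Zhang's proof of Kolyvagin's conjecture for elliptic curves \cite{Zhang-Kolyvaginconj} to the higher-weight modular forms setting. First I would dispose of the easy case: if $y_0$ is non-torsion, then the construction of $c_M(1)$ from $y_{1,\pp}$ in \S\ref{sec::kolyvaginclasses} (via corestriction from $K_1$ to $K$ and reduction modulo $p^M$) yields $c_M(1)\neq 0$ for all sufficiently large $M$, so $\mathrm{CLS}$ is trivially non-trivial. The substantive case is therefore when $y_0$ is torsion; by Zhang's generalized Gross--Zagier formula \cite{zhang1997heights}, this should correspond to $L(f/K,s)$ vanishing to order at least two at $s=k/2$, a regime where Nekov\'a\v{r}'s Theorem \ref{nekovar-thm} says nothing and the conjecture carries real content.

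Zhang's strategy in the elliptic-curve case combines two ingredients. The first is a Bertolini--Darmon-type explicit reciprocity law that, for a prime $\ell$ lying in a suitable level-raising set, computes the localization $c_M(\ell)_\lambda$ in the singular quotient $H^1(K_\lambda^{\ur},W_\pp)$ as the algebraic part of a special value $L(g\otimes\chi,k/2)$, where $g$ is a newform of level $N\ell$ congruent to $f$ modulo $\pp$ and $\chi$ is a ring class character of $K$. The second is a Cornut--Vatsal-type equidistribution result guaranteeing that such a $\chi$ with non-vanishing $L$-value exists for sufficiently deep conductor, which then forces $c_M(\ell)_\lambda \neq 0$. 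My plan is to assemble both ingredients in the present context: for the reciprocity law I would build on the higher-weight Euler-system work of Elias \cite{elias2015kolyvagin} and on Bertolini--Darmon--Prasanna-style formulas for Heegner cycles on Kuga--Sato varieties, combined with the Flach--Cassels / local Tate computations already developed in \S\ref{sha-section}; for the non-vanishing input I would invoke the higher-weight analogue of Cornut--Vatsal equidistribution, which is by now reasonably well understood.

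The hard part will be the level-raising step: producing a prime $\ell \in S_1(M)$ together with a newform $g$ of level $N\ell$, congruent to $f$ modulo $\pp$, whose local behavior at $\ell$ realizes the ramified reduction needed by the reciprocity law. In weight two this is Ribet's theorem; in higher weight one must appeal to Diamond--Taylor-style results, which are much more delicate and require careful control of the residual representation $A_\pp/\pp A_\pp$, in particular big image (already ensured by the choice of $p$ in \S\ref{Cdt}) and an Ihara-type lemma in the Kuga--Sato setting. Matching a level-raising prime obtained this way with one for which the non-vanishing input of Cornut--Vatsal type holds is the chief technical obstacle; once this compatibility is in place, the reciprocity-law step and the passage to $c_M(\ell)\neq 0$ should transfer from the elliptic-curve argument essentially without change.
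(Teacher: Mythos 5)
The statement you have set out to prove is Conjecture \ref{conj:cls}, which the paper itself does \emph{not} prove. It is posed as the higher-weight analogue of Kolyvagin's conjecture, compared to W.~Zhang's work in weight two, and then simply \emph{assumed} for the remainder of \S\ref{selmer-section}; the paper explicitly remarks that it would be ``very interesting'' to extend Zhang's argument to this setting. There is therefore no proof in the paper against which to check your proposal, and anything you write here is not reproducing a result of the paper but attempting to settle an open problem.

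As for the proposal itself: the easy case is correct. When $y_0$ is non-torsion, $c_M(1)$ is (up to the identifications of \S\ref{sec::kolyvaginclasses}) the image of $y_0$ in $\Lambda_\pp(K)/p^M\Lambda_\pp(K)\hookrightarrow\Hf1(K,\Wpm)$, so it is non-zero for $M$ large and $\mathrm{CLS}$ is trivially non-trivial; this is consistent with Theorem \ref{nekovar-thm}. The substantive case is $y_0$ torsion, and there what you offer is a research program, not a proof. Each of the three pillars you lean on is, in the higher-weight Kuga--Sato setting, itself an open problem rather than a step that transfers without change. Level raising with prescribed local behaviour at $\ell$ and control of $A_\pp/\pp A_\pp$ would need an Ihara-type lemma for the middle \'etale cohomology of $\KS$, which is not available in the generality a Zhang-style argument requires. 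The explicit reciprocity law computing $c_M(\ell)_\lambda$ as (the reduction mod $p^M$ of) the algebraic part of $L(g\otimes\chi,k/2)$ for a level-raised form $g$ congruent to $f$ mod $\pp$ is not established for Heegner cycles; the Elias and Bertolini--Darmon--Prasanna-type formulas you cite compute different quantities and would need to be reworked into the precise mod-$p^M$ statement your argument requires. The Cornut--Vatsal-type non-vanishing input would similarly have to be proved for the algebraic parts of the twisted $L$-values of $g$, compatibly with the congruence with $f$, and this too is open. Finally, as you yourself flag, one must exhibit a single prime $\ell$ realizing both the level-raising and the non-vanishing, which is the main compatibility problem in Zhang's own argument and is not addressed here. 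Your sketch identifies the right circle of ideas, but every load-bearing step is left as an unproved claim, so this cannot be regarded as a proof of Conjecture \ref{conj:cls}.
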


Conjecture \ref{conj:cls} is the higher weight counterpart of \emph{Kolyvagin's conjecture} for elliptic curves, which was recently proved in many cases by W. Zhang in \cite{Zhang-Kolyvaginconj} (see also the paper \cite{venerucci} by Venerucci). From now on we assume Conjecture \ref{conj:cls}.

For any finite abelian $p$-group $A$ let us write $\mathrm{Inv}(A)$ for the decreasing sequence $(n_i)_{i \in I}$ of non-negative integers such that there is an isomorphism
\[ A\simeq \bigoplus_{i\in I} \Z/p^{n_i}\Z. \]
The elements of $\mathrm{Inv}(A)$ are the \emph{invariants} of $A$. The subgroup of $A$ corresponding to $\Z/p^{n_i}\Z$ is the \emph{$i$-th invariant subgroup} of $A$.

Let $\Hf1(K,W_\pp)_{\mathrm{div}}$ be the maximal divisible subgroup of $\Hf1(K,W_\pp)$ and set
\[ \mathcal X:=\Hf1(K,W_\pp)\big/\Hf1(K,W_\pp)_{\mathrm{div}}. \]
Recall from \S \ref{sec::kolyvaginclasses} that $\Hf1(K,W_\pp)=\Hf1(K,W_\pp)^+\oplus \Hf1(K,W_\pp)^-$, where $\Hf1(K,W_\pp)^\pm$ is the subgroup on which complex conjugation acts as $\pm1$. 

As before, write $\epsilon$ for the sign in the functional equation for $L(f,s)$; moreover, with self-explaining notation, for any $n\in\N$ define 
\begin{equation} \label{(n)-eq}
(n):=(-1)^n\epsilon.
\end{equation}
\begin{lemma}\label{lem:split}
There exist integers $r^\pm\geq0$ such that\begin{align}
\Hf1(K,W_\pp)^\pm:= (\Q_p/\Z_p)^{r^\pm} \oplus \mathcal{X}^\pm.
\end{align}
\begin{proof}
Up to taking eigenspaces for complex conjugation, we must show that the tautological exact sequence \begin{align}
0\longrightarrow \Hf1(K,W_\pp)_{\mathrm{div}} \longrightarrow \Hf1(K,W_\pp) \longrightarrow \mathcal{X} \longrightarrow 0\label{seq:spezzante}
\end{align}
splits. To see this, recall that divisible groups are injective objects in the category of abelian groups, hence the first term in \eqref{seq:spezzante} is injective and the claim follows.
\end{proof}
\end{lemma}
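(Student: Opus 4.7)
The plan is to prove the splitting for the full Selmer group $\Hf1(K,W_\pp)$ and then pass to the $\tau$-eigenspaces. Because $p$ is odd, the element $(1\pm\tau)/2$ acts on $\Hf1(K,W_\pp)$, so the decomposition into $\pm$-eigenspaces is functorial and exact. In particular it commutes with formation of the maximal divisible subgroup, so applying it to the tautological exact sequence
\[ 0 \longrightarrow \Hf1(K,W_\pp)_{\mathrm{div}} \longrightarrow \Hf1(K,W_\pp) \longrightarrow \mathcal{X} \longrightarrow 0 \]
yields the analogous exact sequences in the $+$ and $-$ eigenspaces; thus it suffices to split the unsigned sequence above and then to identify the divisible part.

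For the splitting itself, the key observation is that divisible abelian groups are injective objects in the category of abelian groups (Baer's criterion applied to $\Z$). Since $\Hf1(K,W_\pp)_{\mathrm{div}}$ is by construction divisible, the identity map on it extends along the inclusion into $\Hf1(K,W_\pp)$, providing a retraction and hence a splitting of the sequence.

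It remains to identify the divisible part with $(\Q_p/\Z_p)^{r^\pm}$ for a non-negative integer $r^\pm$. For this I would note that $\Hf1(K,W_\pp)$ is a cofinitely generated $\mathcal{O}_\pp$-module: by the definition recalled in \S\ref{sectselsha}, it is the image of the finite-dimensional $F_\pp$-vector space $\Hf1(K,V_\pp)$ under $\pi_*$, so its Pontryagin dual is a finitely generated $\mathcal{O}_\pp$-module. A divisible $p$-primary abelian group of finite $\Z_p$-corank is automatically a finite direct sum of copies of $\Q_p/\Z_p$, so $\Hf1(K,W_\pp)^\pm_{\mathrm{div}}\simeq(\Q_p/\Z_p)^{r^\pm}$ with $r^\pm$ finite, completing the proof.

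I do not expect any real obstacle here: the result is a soft statement that relies only on the injectivity of divisible abelian groups, the compatibility of the $\pm$-decomposition with the divisible part (which holds because $2\in\mathcal O_\pp^\times$), and the cofinite generation of $\Hf1(K,W_\pp)$. All three ingredients are either formal or immediate from the setup of \S\ref{sectselsha}.
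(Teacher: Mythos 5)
Your argument is essentially the same as the paper's: both reduce the splitting to the fact that a divisible abelian group is an injective object in the category of abelian groups, so that the inclusion of $\Hf1(K,W_\pp)_{\mathrm{div}}$ admits a retraction. You add two pieces of bookkeeping that the paper leaves implicit, namely that the $\pm$-eigenspace decomposition (available since $p$ is odd) is exact and commutes with taking maximal divisible subgroups, and that $\Hf1(K,W_\pp)$ has finite $\Z_p$-corank so its divisible part is indeed a finite power of $\Q_p/\Z_p$.

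One caveat on the last point: your justification for cofinite generation is not quite right. The global Selmer group $\Hf1(K,W_\pp)$ is \emph{not} the image of $\Hf1(K,V_\pp)$ under $\pi_*$ --- that is only the local definition of $\Hf1(K_v,W_\pp)$; globally, $\Hf1(K,W_\pp)$ is cut out by those local conditions, and by \eqref{altsha} the image $\pi_*\Hf1(K,V_\pp)$ is a proper subgroup with quotient $\Sha(K,W_\pp)$ in general. The fact you want is nonetheless true: $\Hf1(K,W_\pp)$ sits inside $\Hc1(G_{K,\Xi},W_\pp)$, which is a cofinitely generated $\mathcal O_\pp$-module because $G_{K,\Xi}$ has finite $p$-cohomological dimension and $W_\pp$ is cofinitely generated. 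With that correction your argument matches the intended one.
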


The group $\mathcal{X}$, which is finite, corresponds to the $p$-primary part of the Shafarevich--Tate group defined by Flach in \cite[p. 114]{flach1990generalisation}, so the Flach--Cassels pairing induces a perfect pairing on $\mathcal X\times\mathcal X$.

\begin{remark}
The reader should notice that our definition of Shafarevich--Tate group (cf. \S \ref{defsha}) is slightly different from the one given by Bloch--Kato (\cite{bloch1990lfunctions}) and later used by Flach (\cite{flach1990generalisation}), so $\mathcal{X}$ is not, in general, isomorphic to $\Sha(K,W_\pp)$.
\end{remark}

In analogy with what was done in \S \ref{sha-section} for $\Sha(K,W_\pp)$, we can write
\begin{align}\label{eq:mcX}
\mathcal{X}^{-\epsilon}\simeq \bigl(\Z/p^{N_1}\Z \bigr)^2 \times \bigl(\Z/p^{N_3}\Z \bigr)^2 \times \cdots, \\
\mathcal{X}^{\epsilon}\simeq \bigl(\Z/p^{N_2}\Z \bigr)^2 \times \bigl(\Z/p^{N_4}\Z \bigr)^2 \times \cdots,
\end{align}with $N_1\geq N_3 \geq \dots$ and $N_2 \geq N_4 \geq \dots$.

\subsection{A structure theorem for $\Hf1(K,W_\pp)$}

Our goal is to compute the integers $r^\pm$ in Lemma \ref{lem:split} and the integers $N_i$ in \eqref{eq:mcX}. Unfortunately, in passing from Shafarevich--Tate groups to Selmer groups there is a loss of information and we are not able to compute all the $N_i$ as we did for $\Sha(K,W_\pp)$ in Theorem \ref{main}. 

To begin with, recall the integers $M_r$ defined in \eqref{def:M_r} and set
\[ \mu:=\min\set{r \mid M_r < \infty}\in\N. \]
The existence of such a natural number $\mu$ is guaranteed by Conjecture \ref{conj:cls}, which we are assuming throughout. Given $n\in\N$, recall the integer $(n)\in\{\pm1\}$ defined in \eqref{(n)-eq}. 

Our main result for Selmer groups is 

\begin{theorem}\label{the:structsel}
For all $i>\mu+1$ there is an equality $N_i = M_{i-1} - M_{i}$. Moreover, $r^{(\mu+1)} = \mu +1$ and $r^{(\mu)} \leq \mu$.
\end{theorem}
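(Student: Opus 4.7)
The plan is to adapt the inductive argument of Theorem \ref{main} to the Selmer group, bearing in mind that $H^1_f(K, W_\pp)$ is no longer finite: by Lemma \ref{lem:split} it splits, in each eigenspace for complex conjugation, as $(\Q_p/\Z_p)^{r^\pm} \oplus \mathcal{X}^\pm$, where $\mathcal{X}$ carries a perfect alternating pairing inherited from Flach--Cassels. Since $M_0 = \cdots = M_{\mu-1} = \infty$ whereas $M_\mu < \infty$, the Kolyvagin classes $c_M(n)$ with $n \in S_r$ vanish for $r < \mu$ at every level $M$, but for $r \in \{\mu, \mu+1\}$ they give elements whose orders $p^{M - M_r}$ grow without bound with $M$. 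These unbounded families are precisely what populates the divisible part of the Selmer group.

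For the lower bound $r^{(\mu+1)} \geq \mu + 1$, I would iterate the construction in the proof of Theorem \ref{main}. Starting from an $n_0 \in S_{\mu+1}(M_{\mu+1}+1)$ with $\omega(n_0) = M_{\mu+1}$, Corollary \ref{mc3.2} produces primes so that for each $j \leq \mu + 1$ one obtains a Kolyvagin class $c_M(n_j)$ with $\ppar(n_j) \equiv \mu + 1 \pmod 2$ (i.e.\ staying in the $(\mu+1)$-eigenspace) and of order $p^{M - M_{\mu+1}}$ growing without bound. A triangular basis argument, identical to that in the proof of Theorem \ref{main}, shows that these $\mu + 1$ classes are independent; passing to the direct limit yields $\mu + 1$ independent copies of $\Q_p/\Z_p$. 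The matching upper bounds $r^{(\mu+1)} \leq \mu + 1$ and $r^{(\mu)} \leq \mu$ come from an Euler-system argument patterned on Proposition \ref{propfond}: assuming for contradiction that $r^{(\mu+1)} \geq \mu + 2$, choose $\mu + 2$ independent divisible classes, reduce modulo $p^M$ for large $M$, and use Corollary \ref{mc3.2} to produce a prime $l$ whose Frobenius separates them; combined with the global reciprocity law \eqref{reciprocity} applied to a suitable Kolyvagin derivative $c_M(n)$ with $n \in S_\mu$ (which lies in the opposite eigenspace for complex conjugation and can therefore be tested against the $\mu + 2$ divisible classes through the Tate pairing), this forces a local pairing to be simultaneously zero and non-zero. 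The bound $r^{(\mu)} \leq \mu$ follows from the symmetric argument using Kolyvagin derivatives from $S_{\mu+1}$; the difference of one between the two eigenspaces reflects the asymmetric role of the base class $c_M(n_0)$ with $n_0 \in S_\mu$, which is itself counted among the $\mu$ independent divisible generators of the $(\mu)$-eigenspace.

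For $i > \mu + 1$ the equality $N_i = M_{i-1} - M_i$ would then follow by running the triangular basis construction of the proof of Theorem \ref{main} verbatim inside $\mathcal{X}$: choose a maximal isotropic subgroup $D \subset \mathcal{X}$ with cyclic factors $D_j$ of orders $p^{\hat N_j}$, produce separating primes $l_{\mu+2}, l_{\mu+3}, \ldots$ via Corollary \ref{mc3.2}, and use the Flach--Cassels pairing on $\mathcal X$ to force $\hat N_j = N_j = M_{j-1} - M_j$. The main technical obstacle will be the base case $j = \mu + 2$: one must verify that the $\mu + 1$ divisible classes constructed in the previous step are genuinely disjoint from $\mathcal{X}$, so that the Chebotarev argument inside $\mathcal X$ is not polluted by pairings involving them. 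Conceptually, this is what encodes the fact that the first $\mu + 1$ ``would-be'' invariants $N_1, \ldots, N_{\mu+1}$ are infinite and account for the divisible copies in the $(\mu+1)$-eigenspace rather than for torsion invariants of $\mathcal{X}$.
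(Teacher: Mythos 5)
Your plan follows the same broad strategy as the paper: establish $\mu+1$ independent Kolyvagin classes of unboundedly growing order to populate the divisible part (via Proposition \ref{prop:modificata}), bound the divisible ranks via a Kolyvagin/reciprocity argument as in \cite{kolyvagin-selmer}, and then run the triangular-basis induction of Theorem \ref{main} inside $\mathcal{X}$ starting from $i=\mu+2$. The paper's actual proof is in fact even terser than your sketch: it takes the lower bound $r^{(\mu+1)}\geq\mu+1$ and the independence of the $c_M(n_i)$ for granted (citing \cite{kolyvagin-selmer}) and devotes essentially all of its explicit work to the single base case, showing $N_{\mu+2}=M_{\mu+1}-M_{\mu+2}$ by the same pairing computation used in Theorem \ref{main}, after which the induction proceeds verbatim.

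One concrete imprecision you should fix: in your upper-bound argument you propose testing the $\mu+2$ putative divisible classes in the $(\mu+1)$-eigenspace against a Kolyvagin derivative $c_M(n)$ with $n\in S_\mu$, on the grounds that it "lies in the opposite eigenspace and can therefore be tested through the Tate pairing." This has the parity backwards. Proposition \ref{prop:autospazio} puts $c_M(n)$ with $n\in S_\mu$ in $H^1(K,\Wpm)^{\epsilon_\mu}=H^1(K,\Wpm)^{(\mu)}$, while the computation in the proof of Theorem \ref{main} (e.g.\ the pairing $\langle d_{M_0}(n_1),p^M d_i\rangle$ with both entries in the $-\epsilon$ eigenspace) shows that the global Flach--Cassels/Tate pairing detects classes in the \emph{same} eigenspace. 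To bound $r^{(\mu+1)}$ one therefore wants to pair against derivatives with $n\in S_{\mu+1}$ (or, following the mechanics of Proposition \ref{propfond}, feed in a class from $S_\mu$ to produce, after multiplying by a separating prime, a derivative $c_M(nl')\in H^1(K,\cdot)^{(\mu+1)}$ that does the actual detecting). The asymmetry $r^{(\mu+1)}=\mu+1$ versus $r^{(\mu)}\leq\mu$ then comes out of the fact that $\mu$ is by definition the first level at which Kolyvagin classes are non-zero, not from ``counting $c_M(n_0)$ among the $\mu$ divisible generators of the $(\mu)$-eigenspace'' — the theorem asserts only an inequality in that eigenspace, so that heuristic should be dropped.
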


To prove Theorem \ref{the:structsel} we need an auxiliary result.

\begin{proposition}\label{prop:modificata}
Let $C$ be a subgroup of $\Hf1(K,W_\pp)$ of order $p^r$. Let $M$ be an integer such that $M > \min\{M_\mu, M_r\}$. 
\begin{enumerate}
\item If $r=\mu$ then there exist $l_1,\dots, l_{2\mu +1} \in S_1(M)$ such that $\ord\bigl(c_M(n_i)\bigr) = p^{M-M_\mu}$ with $n_i = l_i \dots l_{i+\mu+1}$ for $i=1,\dots,\mu$.
\item If $r > \mu$ then there exists $n \in S_r(M)$ such that $\ord\bigl(c_M(n)\bigr)=p^{M-M_r}$.
\end{enumerate}
Moreover, $\bigl\langle c_M(n_i)\bigr\rangle \cap C =\{ 0\}$ for $i=1,\dots,\mu+1$ if $r=\mu$ and $\bigl\langle c_M(n)\bigr\rangle \cap C =\{ 0\}$ if $r>\mu$.
\begin{proof}

Without loss of generality, we can prove the statements for $M$ large enough.
By definition of $M_{\mu+1}$, there exists $n \in S_{\mu+1}(M_{\mu+1}+1)$ such that $\ord\bigl(c_M(n)\bigr) = p^{M-M_r}$.

Arguing as in the proof of Proposition \ref{propfond}, we can produce $n_1=l_1 \dots l_{\mu+1} \in S_{\mu+1}(M)$ by recursively replacing the prime divisors of $n=l_1' \dots l_{\mu+1}'$ one by one.
In order to define the other $n_i$ it suffices to proceed inductively by the same algorithm, now applied to $n_{i-1}$. This takes care of part (1). Finally, part (2) is Proposition \ref{propfond} for $r > \mu +1$. \end{proof}
\end{proposition}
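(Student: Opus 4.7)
The overall strategy is the same prime-swap template used in the proof of Proposition \ref{propfond}, modified in two ways: for part (2), where $r>\mu$, Lemma \ref{m0mrmr1} guarantees $M_r$ is finite, so the content of Proposition \ref{propfond} can be reused essentially verbatim; for part (1), we must produce a whole family of $\mu+1$ overlapping cycles with $\mu+1$ prime factors each, rather than a single cycle. The eigenspace restriction in Proposition \ref{propfond} (that $C\subseteq\Hf1(K,W_\pp)^{\epsilon_r}$) is not essential for part (2): decomposing $C=C^+\oplus C^-$ under complex conjugation and running the reciprocity argument on each summand — equivalently, selecting the characters $\phi\in C^*=\Hom(C,F_\pp/\mathcal O_\pp)$ using the version of Lemma \ref{prop3.1} applied to the full $C$ — suffices.

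For part (1), I would first use the definition of $M_{\mu+1}$ (which is finite by Lemma \ref{m0mrmr1}, since $M_\mu<\infty$ by the very definition of $\mu$) to fix an auxiliary cycle $n'=l_1'\cdots l_{\mu+1}'\in S_{\mu+1}(M_{\mu+1}+1)$ with $\omega(n')=M_{\mu+1}$. Then I adapt the iterative prime-swap argument of Proposition \ref{propfond}, replacing the $l_j'$ one at a time with new primes $l_j\in S_1(M_\mu+1)$ provided by Lemma \ref{prop3.1} (or Corollary \ref{mc3.2}). At each step the new prime is chosen so that its Frobenius detects a prescribed character $\phi\in C^*$ lying outside a proper subgroup of $C^*$; combined with the Tate reciprocity relation of \cite[Proposition 2.2]{besser1997finiteness} this forces both the disjointness $\langle c_M(\cdot)\rangle\cap C=\{0\}$ and the order $\ord(c_M(\cdot))=p^{M-M_\mu}$. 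After $\mu+1$ swaps we obtain $n_1=l_1\cdots l_{\mu+1}$ with the required properties. To build $n_2,\dots,n_{\mu+1}$ I would iterate the same algorithm on $n_{i-1}$, producing a fresh prime $l_{i+\mu}$ and sliding the window one position to the right, setting $n_i:=l_i\cdots l_{i+\mu}$. This yields the $2\mu+1$ primes and $\mu+1$ overlapping products claimed.

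The main obstacle is the order requirement in part (1): a generic cycle in $S_{\mu+1}$ would only have the smaller invariant $\omega=M_{\mu+1}\leq M_\mu$, and we need the larger invariant $\omega(n_i)=M_\mu$. To force $c_{M_\mu}(n_i)=0$ while $c_{M_\mu+1}(n_i)\neq 0$, one must control the localizations $c_{M_\mu}(n_i)_\lambda$ at the primes dividing $n_i$ via Corollary \ref{rappnml} and Proposition \ref{propnekovar}. The delicate point is to do this while simultaneously achieving $\langle c_M(n_i)\rangle\cap C=\{0\}$: since $C$ has $p$-rank only $\mu$ but each $n_i$ carries $\mu+1$ primes, there is exactly one extra Chebotarev degree of freedom, and this slack is precisely what is exploited when choosing the swap characters in Lemma \ref{prop3.1}. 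Finally, part (2) will follow by directly invoking Proposition \ref{propfond} and the eigenspace-decomposition remark above, completing the argument.
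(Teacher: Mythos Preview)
Your proposal takes essentially the same approach as the paper: start from the definition of $M_{\mu+1}$ to obtain an initial $n\in S_{\mu+1}$, run the prime-swap algorithm of Proposition~\ref{propfond} to replace its prime factors one by one and produce $n_1$, then iterate the same procedure on $n_{i-1}$ to slide the window and obtain the remaining $n_i$; part (2) is reduced directly to Proposition~\ref{propfond}. Your remarks on the eigenspace decomposition of $C$ and on the order-matching issue go beyond the paper's very terse write-up but do not change the underlying strategy.
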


As in \cite{kolyvagin-selmer}, the classes $c_M(n_i)$ for $i=1,\dots,\mu +1$ appearing in Proposition \ref{prop:modificata} turn out to be independent.
%
%
It follows that the first $\mu+1$ invariants of $\Hf1(K,\Wpm)^{(\mu+1)}$ are all equal to $M-M_\mu$.

Now we can prove the main result of this section.

\begin{proof}[Proof of Theorem \ref{the:structsel}]
It suffices to show that $M_{\mu+1} - M_{\mu+2}$ is equal to the $(\mu + 2)$-th invariant of $\Hf1(K,\Wpm)^{(\mu)}$. This is because, since $\Hf1(K,W_\pp)$ is the direct limit over $M$ of the groups $\Hf1(K,\Wpm)$, we can use the $(\mu+2)$-th invariant as the initial step of the inductive argument in the proof of Theorem \ref{main} and then use exactly the same strategy.

For each $i$ choose a generator $c_{i} \in \Hf1(K,\Wpm)^{(\mu)}$ of the $i$-th invariant subgroup of $\Hf1(K,\Wpm)^{(\mu)}$.
By Lemma \ref{mc3.2}, we can pick a prime $l_{\mu+2} \in S_1(M_{\mu+1}+N_{\mu+2})$ such that\begin{align}
\ord c_{M_{\mu+1}+N_{\mu+2}}(n_{\mu+1})_{\lambda_{\mu+2}} & = p^{N_{\mu+2}},\\
\ord c_{\mu+2,\lambda_{\mu+2}} & = p^{N_{\mu+2}},\\
c_{i,\lambda_{\mu+2}} &= 0, \quad i> \mu+2.\label{lastequivmod}
\end{align}Here, as before, $\lambda_{\mu+2}$ is the unique prime of $K$ above $l_{\mu+2}$.
Now we define $n_{\mu+2}:=n_{\mu+1}l_{\mu+2}$.
Then for all $i > \mu+1$ and all $0\leq \hat M \leq  N_i-1$ one has
\begin{align}
\begin{split}
\big\langle c_{M_{\mu+1}}(n_{\mu+2}), p^{\hat M} c_i \big\rangle  =  \big\langle c_{M_{\mu+1}-\hat M}(n_{\mu+2}),c_i \big\rangle & = 
 \sum_{j=1}^{\mu+2} \big\langle c_{M_{\mu+1}- \hat M+ N_{\mu+1}}(n_{\mu+2})_{\lambda_j}, c_{i,\lambda_j} \big\rangle_{\lambda_j}  \\ &=\big\langle c_{M_{\mu+1}-\hat M+ N_{\mu+1}}(n_{\mu+2})_{\lambda_{\mu+2}}, c_{i,\lambda_{\mu+2}} \big\rangle_{\lambda_{\mu+2}}.
\end{split}\label{eq:pairingmod}
\end{align}Moreover, if $i>\mu+2$ then \eqref{eq:pairingmod} is $0$ by \eqref{lastequivmod}.

Observe that $\ord(c_{\mu+2,\lambda_{\mu+2}})=p^{N_{\mu+2}} $ and $\ord\bigl(c_{M_{\mu+1}- \hat M+ N_k}(n_{\mu+2})_{\lambda_{\mu+2}}\bigr)=p^{ N_{\mu+2}- \hat M}$.
By Lemma \ref{pairingnozero}, if $i=\mu+2$ then the pairing \eqref{eq:pairing} is non-zero for $0\leq \hat M\leq  N_{\mu+2}-1$. We have proved that $c_{M_{\mu+1}}(n_{\mu+2})$ has order at least $p^{N_{\mu+2}}$; but its order is at most $p^{M_{\mu+1}-M_{\mu+2}}$, hence\begin{align} N_{\mu+2} \leq M_{\mu+1}-M_{\mu+2}.\label{primadismod}
\end{align}
Now let $C:=\big\langle c_1,\dots,c_{\mu+1},c_M(n_1),\dots,c_M(n_{\mu+1})\big\rangle^{(\mu+1)}$; the largest order that an element $ c \in \Hf1(K,W_\pp)^{(\mu)} $ such that $ \langle c \rangle\cap C= \{0\} $ can have is $ p^{ N_{\mu+2}} $. On the other hand, by Proposition \ref{propfond} there exists an element of $ \Hf1(K,W_\pp)^{(\mu)}$ whose order is $ p^{M_{\mu+1}-M_{\mu+2}}$, so\begin{align}
M_{\mu+1}-M_{\mu+2}\leq  N_{\mu+2}.\label{secondadismod}
\end{align}
Finally, combining \eqref{primadismod} with \eqref{secondadismod} gives $  N_{\mu+2}=M_{\mu+1}-M_{\mu+2} $. \end{proof}

\bibliography{biblio}
\bibliographystyle{amsplain}

\end{document}